\numberwithin{equation}{section} % \renewcommand{\rm}{\normalshape} %
\theoremstyle{plain}
\newtheorem{thm}{Theorem}[section]
\newtheorem{lem}[thm]{Lemma}
\newtheorem{pro}[thm]{Proposition}
\newtheorem{de}[thm]{Definition}
\newtheorem{rem}[thm]{Remark}
\def\R {{\Bbb R}}
\def\N {{\Bbb N}}
\def\ba{{\bf a}}
\def\bb{{\bf b}}
\def\bi{{\bf i}}
\def\bj{{\bf j}}
\def\ss{\pmb{\mathfrak{s}}}
\def\tt{\pmb{\mathfrak{t}}}
\DeclareMathOperator*{\esssup}{ess\,sup}
\DeclareMathOperator*{\essinf}{ess\,inf}
\begin{document}
\baselineskip 14pt
\title{Dimension estimates for $C^1$ iterated function systems and  repellers. Part II}

\author{De-Jun Feng}
\address[De-Jun Feng]{Department of Mathematics\\ The Chinese University of Hong Kong\\ Shatin,  Hong Kong\\ }
\email{djfeng@math.cuhk.edu.hk}

\author{K\'aroly Simon}
\address[K\'aroly Simon]{Budapest University of Technology and Economics, Department of Stochastics, Institute of Mathematics and
MTA-BME Stochastics Research Group,
 1521 Budapest, P.O.Box 91, Hungary} \email{simonk@math.bme.hu}

%\keywords{Hausdorff dimension, exact dimensionality, invariant measures, iterated function systems, self-affine sets}
\thanks{
2000 {\it Mathematics Subject Classification}:  37C45, 28A80\\
\indent {\em Key words and phrases.} Iterated function systems, repellers, Hausdorff dimension, box-counting dimension, packing dimension,  singularity dimension, Lyapunov dimenson.
}

\date{}

\begin{abstract}
This is the second part of our study
of the dimension theory of $C^1$ iterated function systems (IFSs) and repellers on $\mathbb{R}^d$.
In the first part \cite{FengSimon2020} we proved that the upper box-counting dimension of the attractor of every $C^1$ IFS on $\R^d$ is bounded above by its singularity dimension, and
the upper packing dimension of every ergodic invariant measure associated with this IFS is bounded above by its Lyapunov dimension. Here we introduce a generalized transversality condition (GTC) for  parameterized families of $C^1$ IFSs, and show that  if the GTC is satisfied then the dimensions of the IFS attractor and of the ergodic invariant measures are given by these upper bounds, for almost every (in an appropriate sense) parameter.  Moreover we verify  the GTC for some parametrized families of $C^1$ IFSs on $\R^d$.
\end{abstract}

\maketitle

\section{Introduction}
\label{S-1}
The present paper is a continuation of our work in \cite{FengSimon2020} for studying
the dimension theory of $C^1$ iterated function systems (IFSs) and repellers.

One of the fundamental problems in fractal geometry and dynamical systems is to compute various fractal dimensions of attractors of IFSs and associated invariant measures. The corresponding problem has been well understood when the underlying IFSs consist of similitudes or conformal maps satisfying certain separation conditions (see e.g.~\cite{Hutchinson1981, Hochman2014, Bowen1979,  Ruelle1982, GatzourasPeres1997, Patzschke1997, przytycki2010conformal}). The problem becomes substantially more difficult when the underlying IFSs are non-conformal. In the last 3 decades, many significant progresses have been achieved for affine IFSs, see e.g.~\cite{Bedford1984, McMullen1984, Falconer88, Kaenmaki04, JordanPollicottSimon07, FengShmerkin2014, DasSimmons2017, FalconerKempton2018, Barany2019,HochmanRapaport2019} and the references in the survey papers \cite{Chen_2010, Falconer2013} and a coming book \cite{BSS2021}.

 In contrast to the extensive studies on affine IFSs, there have been relatively few results on those IFSs which are neither conformal nor affine.  In 1994, Falconer \cite{Falconer94} introduced a quantity (known as the singularity dimension) in terms of sub-additive topological pressure, and showed that it is an upper bound for the upper box-counting dimension of repellers of $C^2$ expanding maps satisfying a ``bunching'' condition. Later in 1997, Zhang \cite{Zhang97} proved that this upper bound holds for the Hausdorff dimension of repellers of arbitrary $C^1$ expanding maps. We remark that the results of Falconer and Zhang extend directly to the IFS setting.  Recently, Cao, Pesin and Zhao \cite{CaoPesinZhao19} also gave an upper bound for the upper box-counting dimension of repellers of $C^{1+\alpha}$ expanding maps satisfying  a certain dominated
splitting property. However that upper bound depends on the   splitting involved and
is usually strictly larger than the singularity dimension.  In \cite{FengSimon2020} the authors proved that the singularity dimension is an upper bound of the upper box-counting dimension of the attractor of every $C^1$ IFS or the repeller of every $C^1$ expanding map, improving the aforementioned results in \cite{Falconer94, Zhang97, CaoPesinZhao19}. The authors also established a measure analogue of this result, that is, the upper packing dimension of  every ergodic invariant measure associated with a $C^1$ IFS or repeller is bounded above by its Lyapunov dimension, improving an earlier result of Jordan and Pollicott \cite{JordanPollicott08} for the upper Hausdorff dimension of measures. The reader is referred to Section~\ref{S-2.3} for the definitions of singularity dimension and Lyapunov dimension.

In \cite{Hu1998} Hu computed the box-counting dimension of repellers of  $C^2$ maps on $\R^2$ which  have an invariant strong unstable foliation along which they expand more strongly than in the complementary directions. Very recently, Falconer, Fraser and Lee \cite{FalconerFraserLee2020} computed the $L^q$-spectra of Bernoulli measures associated with a class of planar IFSs consisting of $C^{1+\alpha}$ maps for which the Jacobian is a lower triangular matrix subject to a domination condition and satisfying the rectangular open set condition.  As a corollary they obtained a formula for the box-counting dimension of the attractors of such plannar IFSs. In another recent paper \cite{JurgaLee2020}, Jurga and Lee proved that, under slightly stronger assumptions, these Bernoulli measures (and more generally, quasi-Bernoulli measures) on the attractors are exact dimensional with dimension given by a Ledrappier-Young type formula.  In earlier related works, Bedford and Urba\'{n}ski \cite{BedfordUrbanski1990} calculated the box-counting and Hausdorff dimensions of the attractors of a very special class of planar nonlinear triangular $C^{1+\alpha}$ IFSs (of which the attractors are curves),   Manning and Simon \cite{ManningSimon07} and B\'{a}r\'{a}ny \cite{Barany2009} studied the sub-additive pressure associated to nonlinear $C^{1+\alpha}$ IFSs  whose maps have triangular Jacobians.

In this paper, we introduce a generalized transversality condition (GTC) for  parametrized families of $C^1$ IFSs on $\R^d$, and show that  if the GTC is satisfied then for almost every (in an appropriate sense) parameter,  the Hausdorff and box-counting dimensions of the IFS attractor are indeed given by the singularity dimension, and the dimension of ergodic invariant measures on the attractor is given by its Lyapunov dimension.  Moreover, we will verify  the GTC for several classes of translational families of $C^1$ IFSs.

Before formulating our results precisely,  we first recall some basic notation and definitions. By a $C^1$ IFS on a compact set $Z\subset\R^d$ we mean a finite collection $\mathcal F=\{f_i\}_{i=1}^\ell$ of self-maps on $Z$,  such that there exists an open set $U\supset Z$ so that each $f_i$ extends to a  $C^1$-diffeomorphism $f_i: U\to f_i(U)\subset U$ with
$$
\rho_i:=\sup_{x\in U}\|D_xf_i\|<1,
$$
where $D_xf$ stands for the differential of $f$ at $x$ and $\|\cdot\|$ is the standard matrix norm (i.e., $\|A\|$ is the largest singular value of $A$).

Let $K$ be the attractor of the IFS $\mathcal F$, that is,  $K$ is the unique non-empty compact subset of $Z$ such that
\begin{equation}
\label{e-IFS}
K=\bigcup_{i=1}^\ell f_i(K)
\end{equation}
(cf.~\cite{Hutchinson1981}).

Let $(\Sigma,\sigma)$ be the one-sided full shift over the alphabet $\{1,\ldots, \ell\}$.  Let $\Pi: \Sigma\to K$ denote the corresponding coding map associated with the IFS $\mathcal F$, that is,
\begin{equation}
\label{e-code}
\Pi(\bi)=\lim_{n\to \infty}f_{i_1}\circ \cdots\circ f_{i_n}(0), \qquad \bi=(i_n)_{n=1}^\infty.
\end{equation}
It is well known that $\Pi$ is continuous and surjective (\cite{Hutchinson1981}).  For a $\sigma$-invariant Borel probability measure $\mu$ on $\Sigma$, let $\Pi_*\mu$ denote the push-forward of $\mu$ by $\Pi$, that is, $\Pi_*\mu(E)=\mu(\Pi^{-1}(E))$ for each Borel subset $E$ of $\R^d$.

For a Borel probability measure $\xi$ on $\R^d$, we call
$$
\underline{d}_\xi(x)=\liminf_{r\to 0}\frac{\log \xi(B(x,r))}{\log r}\quad \mbox{ and }\quad \overline{d}_\xi(x)=\limsup_{r\to 0}\frac{\log \xi(B(x,r))}{\log r}
$$
the {\it lower and upper local dimensions of $\xi$ at $x$}, where $B(x,r)$ stands for the closed ball centered at $x$ of radius $r$.
Moreover, we call
$$
\underline{\dim}_H\xi=\essinf_{x\in {\rm spt}(\xi)}\underline{d}_\xi(x) \quad \mbox{ and }\quad \overline{\dim}_P\xi=\esssup\limits_{x\in {\rm spt}(\xi)}\overline{d}_\xi(x)
$$
the {\it lower Hausdorff dimension and upper packing dimension of $\xi$}, respectively. If   $\underline{\dim}_H\xi=\overline{\dim}_P\xi$, we say that $\xi$ is {\it exact dimensional} and write $\dim \xi$ or  $\dim_H \xi$ for this common value.

To introduce the notion of GTC,  let  $\ell\geq 2$ and let ${\mathcal F}^t=\{f_1^t,\ldots, f_\ell^t\}$, $t\in {\Omega}$, be a parametrized family of $C^1$ IFSs defined on a common compact subset $Z$ of $\R^d$, where $({\Omega}, \rho)$ is a separable metric space, such that  the following two conditions hold: \begin{itemize}
\item[(C1)] The maps $f_i^t$ have a common Lipschitz constant $\theta\in (0,1)$, that is
\begin{equation}
\label{e-f1}
|f^t_i(x)-f^t_i(y)| \leq \theta |x-y|
\end{equation}
for all $1\leq i\leq\ell$, $t\in {\Omega}$ and $x,y\in Z$.
\item[(C2)]The mapping $t\mapsto f^t_i(x)$ is continuous over ${\Omega}$ for every given $x\in Z$ and $1\leq i\leq \ell$.
\end{itemize}

For each $t\in {\Omega}$, let $K^t$ denote the attractor of $\mathcal F^t$, and let $\Pi^t:\; \Sigma\to \R^d$  denote the coding map associated with the IFS ${\mathcal F}^t$.
Due to the conditions (C1) and (C2),  the mapping $(t,\bi)\mapsto \Pi^t(\bi)$ is continuous over the product space ${\Omega}\times \Sigma$.

For $t\in {\Omega}$,  $r>0$ and ${\bf i}\in \Sigma_*:=\bigcup_{n=0}^\infty \{1,\ldots, \ell\}^n$, set
\begin{equation}
\label{e-f1.0}
Z_{\bf i}^t(r)=
\inf_{x\in \Sigma} \min\left \{ \frac{r^k}{ \phi^k (D_{\Pi^t x}f^t_{{\bf i}} ) }:\; k=0, 1,\ldots, d\right\} ,
\end{equation}
where   $f^t_{{\bf i}}:=f^t_{i_1}\circ \cdots \circ f^t_{i_n}$ for ${\bf i}=i_1\ldots i_n$,  $f^t_\varepsilon$ denotes the identity map on $\R^d$, and $\phi^s(\cdot)$ stands for the singular value function (see \eqref{e-singular} for the definition).

\begin{de}
\label{de-1.2}
{\rm
Let $\eta$ be a locally finite Borel  measure on ${\Omega}$. We say that the family ${\mathcal F}^t$, $t\in {\Omega}$, satisfies {\color{red} a} {\it generalized transversality condition} (GTC) with respect to $\eta$ if there exist $\delta_0>0$ and a function $\psi: (0,\delta_0)\to [0,\infty)$ with $\lim_{\delta\to 0}\psi(\delta)=0$ such that  the following statement holds:  for every $t_0\in {\Omega}$ and every $0<\delta<\delta_0$, there exists a constant $C=C(t_0, \delta)>0$ such that for all distinct ${\bf i}, {\bf j}\in \Sigma$ and $r>0$,
\begin{equation}
\label{e-f2}
\eta\left\{t\in B(t_0, \delta):  \; |\Pi^t({\bf i})-\Pi^t({\bf j})  |< r\right\}\leq C e^{|{\bf i}\wedge {\bf j}| \psi(\delta)} Z^{t_0}_{{\bf i}\wedge {\bf j}}(r) ,
\end{equation}
where $B(t_0,\delta)$ denotes the closed ball in ${\Omega}$ of radius $\delta$ centered at $t$,  $\bi\wedge \bj$ denotes the common initial segment of $\bi$ and $\bj$, and $|\bi\wedge \bj|$ is the length of the word $\bi\wedge \bj$.
}
\end{de}

The introduction of the GTC is inspired by the work of Jordan, Pollicott and Simon \cite{JordanPollicottSimon07} who defined the self-affine transversality condition for certain translational families of affine IFSs. The new feature here is that the upper bound term in the right-hand side of \eqref{e-f2} depends upon $t_0$, $\delta$ and $|{\bf i}\wedge {\bf j}|$, whilst in the setting of \cite{JordanPollicottSimon07} the corresponding upper bound term is independent of these parameters and  is determined by the linear parts of one pre-given affine IFS.

%As an interesting fact, if two parametrized families $({\mathcal F}^{t_1}_1)_{t_1\in {\Omega}_1}$,  $({\mathcal F}^{t_2}_2)_{t_2\in {\Omega}_2}$ both satisfy the GTC, then so does their direct product ${\mathcal F}^{t_1}_1\times {\mathcal F}^{t_2}_2$, $(t_1,t_2)\in {\Omega}_1\times {\Omega}_2$.  For the involved definition and the precise statement, see Definition \ref{de-4.1} and Proposition \ref{pro-4.2}.

For $t\in {\Omega}$ and a $\sigma$-invariant measure $\mu$ on $\Sigma$, we write
\begin{equation}
\label{e-de1.6}
d(t):=\dim_S(\mathcal F^t),\quad d_\mu(t):=\dim_{L,\mathcal F^t}\mu
\end{equation}
for the singularity dimension of $\mathcal F^t$ and  the Lyapunov dimension of $\mu$ with respect to $\mathcal F^t$, respectively; see Definitions \ref{de-1}-\ref{de-2}. For $E\subset \R^d$, let $\dim_HE$ denote the Hausdorff dimension of $E$, and let  $\overline{\dim}_BE, \;\underline{\dim}_BE$ denote the upper and lower  box-counting dimensions of $E$, respectively (cf. \cite{Falconer2003}). When $\overline{\dim}_BE=\underline{\dim}_BE$, the common value is said to be the box-counting dimension of $E$ and is denoted by $\dim_BE$.

The first result of the present paper is the following.

\begin{thm}
\label{thm-f1.1}
Let ${\mathcal F}^t=\{f_1^t,\ldots, f_\ell^t\}$, $t\in {\Omega}$, be a parametrized family of $C^1$ IFSs defined on a common compact subset $Z$ of $\R^d$, such that the conditions (C1)-(C2) hold. Let $\eta$ be a locally finite Borel  measure on ${\Omega}$. Assume that $({\mathcal F}^t)_{t\in {\Omega}}$ satisfies the GTC with respect to $\eta$. Then the following properties hold.
\begin{itemize}
\item[(i)] Let  $\mu$  be a $\sigma$-invariant ergodic measure  on $\Sigma$.  For $\eta$-a.e.~$t\in {\Omega}$,  $\Pi^t_*\mu$ is exact dimensional and
$$
 \dim_H \Pi^t_*\mu=\min\{d, d_\mu(t)\}.
$$
Moreover, $\Pi^t_*\mu\ll {\mathcal L}_d$ for $\eta$-a.e.~$t\in \{t'\in {\Omega}: \; d_\mu(t')>d\}$, where ${\mathcal L}_d$ denotes the Lebesgue measure on $\R^d$.
\item[(ii)] For $\eta$-a.e.~$t\in {\Omega}$,
$$
\dim_H K^t=\dim_BK^t=\min\{d, d(t)\}.
$$
Moreover, ${\mathcal L}_d(K^t)>0$ for $\eta$-a.e.~$t\in \{t'\in {\Omega}: \; d(t')>d\}$.
\end{itemize}
\end{thm}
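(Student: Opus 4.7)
From Part~I one already has the upper bounds $\overline{\dim}_P\Pi^t_*\mu\leq d_\mu(t)$ and $\overline{\dim}_B K^t\leq d(t)$, both automatically at most $d$, so the task is to prove the matching lower bounds and the Lebesgue-regularity statements for $\eta$-almost every $t$. Because the GTC is a local condition on balls $B(t_0,\delta)$ and $({\Omega},\rho)$ is separable, I would fix such a ball, establish the conclusions there, and assemble the global statement by a countable cover centered at a dense subset of ${\Omega}$.

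For part~(i) the workhorse is the classical potential-theoretic energy method. Given $s<\min\{d,d_\mu(t_0)\}$, estimate
\begin{equation*}
I_s:=\int_{B(t_0,\delta)}\iint_{\Sigma\times\Sigma}\frac{d\mu(\bi)\,d\mu(\bj)\,d\eta(t)}{|\Pi^t(\bi)-\Pi^t(\bj)|^s}.
\end{equation*}
Interchanging the order of integration and applying the layer-cake identity, the inner $\eta$-integral becomes $s\int_0^\infty r^{-s-1}\eta\{t\in B(t_0,\delta):|\Pi^t(\bi)-\Pi^t(\bj)|<r\}\,dr$, which the GTC bounds by $C\,e^{|\bi\wedge\bj|\psi(\delta)}\int_0^\infty r^{-s-1}Z^{t_0}_{\bi\wedge\bj}(r)\,dr$. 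A direct computation from the min-formula defining $Z$ and the piecewise structure of the singular-value function gives $\int_0^\infty r^{-s-1}Z^{t_0}_{\mathbf{u}}(r)\,dr\lesssim \sup_x\phi^s(D_{\Pi^{t_0}x}f^{t_0}_{\mathbf{u}})^{-1}$, and grouping the pairs $(\bi,\bj)$ by their common prefix turns $I_s$ into
\begin{equation*}
I_s\;\lesssim\;\sum_{n=0}^\infty e^{n\psi(\delta)}\sum_{|\mathbf{u}|=n}\mu([\mathbf{u}])^2\sup_x\phi^s(D_xf^{t_0}_{\mathbf{u}})^{-1}.
\end{equation*}
The Shannon--McMillan--Breiman theorem together with Kingman's subadditive ergodic theorem applied to $\log\phi^s(Df^{t_0}_{\,\cdot\,})$ force the $n$th term to decay exponentially at rate $h_\mu(\sigma)-\chi^s(\mu,t_0)>0$, the strict positivity being exactly the meaning of $s<d_\mu(t_0)$ in the definition of Lyapunov dimension recalled from Part~I. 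Choosing $\delta$ small enough that $\psi(\delta)$ is strictly smaller than this gap makes $I_s<\infty$, so $\dim_H\Pi^t_*\mu\geq s$ for $\eta$-a.e.~$t\in B(t_0,\delta)$. Letting $s\nearrow\min\{d,d_\mu(t_0)\}$ along a countable sequence, combined with continuity of $t\mapsto d_\mu(t)$ (a consequence of (C1)--(C2)) and the separability cover, delivers~(i).

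Statement~(ii) is deduced from (i) by choosing $\mu$ to be an ergodic subadditive equilibrium measure with $d_\mu(t_0)=\min\{d,d(t_0)\}$, whose existence is supplied by the variational principle for the singularity dimension established in Part~I. The Lebesgue-regularity statements are obtained by rerunning the Fubini--GTC calculation at the \emph{critical} exponent $s=d$: using $\lim_{r\to 0}r^{-d}Z^{t_0}_{\mathbf{u}}(r)=\phi^d(Df^{t_0}_{\mathbf{u}})^{-1}$ and Fatou's lemma one shows
\begin{equation*}
\int_{B(t_0,\delta)}\int_{\R^d}\liminf_{r\to 0}\frac{\Pi^t_*\mu(B(x,r))}{(2r)^d}\,d\Pi^t_*\mu(x)\,d\eta(t)\;<\;\infty,
\end{equation*}
which forces the lower $d$-density of $\Pi^t_*\mu$ to be finite a.e.~and hence $\Pi^t_*\mu\ll\mathcal{L}_d$; an identical argument applied to the equilibrium measure for the attractor yields $\mathcal{L}_d(K^t)>0$ on $\{t:d(t)>d\}$.

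The principal obstacle is the prefix-length weight $e^{|\bi\wedge\bj|\psi(\delta)}$ built into the GTC: it couples the admissible radius $\delta$ to the Lyapunov-dimension gap $h_\mu-\chi^s$, making the estimate genuinely local in $t_0$, so the global conclusion rests on separability and continuity of the Lyapunov/singularity dimension in the parameter. A secondary technical point is transferring the base-point singular-value quantity $\phi^s(Df^{t_0}_{\mathbf{u}})$ to generic $t\in B(t_0,\delta)$ when extracting the dimension for $t$ rather than $t_0$; this is handled by the joint continuity of the differentials guaranteed by (C1)--(C2).
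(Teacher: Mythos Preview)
Your core strategy---energy method via layer-cake plus the GTC, Shannon--McMillan--Breiman combined with Kingman, absolute continuity via finite lower $d$-density, and the use of an ergodic equilibrium measure for part~(ii)---matches the paper's, and your computation of $\int_0^\infty r^{-s-1}Z^{t_0}_{\mathbf u}(r)\,dr\lesssim\bigl(\sup_x\phi^s(D_xf^{t_0}_{\mathbf u})\bigr)^{-1}$ is exactly the content of the paper's Lemma preceding the proof of the key proposition.

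There is, however, a genuine gap in how you globalize the local estimate. You invoke ``continuity of $t\mapsto d_\mu(t)$ (a consequence of (C1)--(C2))'' and ``joint continuity of the differentials guaranteed by (C1)--(C2)''. Neither follows from those hypotheses: (C2) only asserts that $t\mapsto f_i^t(x)$ is continuous for each fixed $x$; it says nothing about $t\mapsto D_xf_i^t$, and (C1) merely bounds $\|D_xf_i^t\|$. Since $d_\mu(t)$ is built from the differentials, there is no reason for it to depend continuously---or even semicontinuously---on $t$. Without this, your passage from the local bound $\underline{\dim}_H\Pi^t_*\mu\geq\min\{d,d_\mu(t_0)\}-\varepsilon(\delta)$ on $B(t_0,\delta)$ to the desired bound $\geq\min\{d,d_\mu(t)\}$ at a generic $t$ breaks down. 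The paper avoids any regularity assumption on $d_\mu(\cdot)$: using separability, for each ball $B(y_n,\delta/2)$ in a countable cover it selects a \emph{near-maximizer} $y_n^*\in B(y_n,\delta/2)$ with $d_\mu(y_n^*)\geq\sup_{t'\in B(y_n,\delta/2)}d_\mu(t')-1/k$, then covers $\Omega$ by the sets $\Omega_n=\{t\in B(y_n^*,\delta):d_\mu(y_n^*)\geq d_\mu(t)-1/k\}$, on each of which the local estimate centered at $y_n^*$ already dominates $\min\{d,d_\mu(t)\}-2/k$. This uses only the uniform bound $0\leq d_\mu(t)\leq\log\ell/\log(1/\theta)$.

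A smaller technical point: the claim that $\sum_{|\mathbf u|=n}\mu([\mathbf u])^2\sup_x\phi^s(D_xf^{t_0}_{\mathbf u})^{-1}$ decays exponentially directly from SMB and Kingman is not quite rigorous, since those are $\mu$-a.e.\ statements and the $\sup_x$ is over all of $\Sigma$. The paper handles this by an Egorov-type decomposition $\mu=\sum_j\mu_j$ with $\mu_j([\bi|n])\leq c_j\,\varphi^s(\bi|n)\,e^{-n\psi(\delta)}\theta^{n\epsilon}$ holding uniformly, and then shows each $\Pi^t_*\mu_j$ has finite $s$-energy separately.
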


The above theorem is a nonlinear analogue of the results of Jordan, Pollicott and Simon \cite[Theorems 4.2-4.3]{JordanPollicottSimon07} for affine IFSs.  We emphasize that  in the nonlinear case, the singularity and Lyapunov dimensions depend on the parameter $t$,   whilst in the affine case, the corresponding quantities are constant. This is a  key difference between the affine case and the nonlinear case. We remark that Theorem \ref{thm-f1.1} also extends and generalizes the corresponding results of Simon, Solomyak and Urba\'{n}ski (\cite[Theorem 3.1]{SSU2001a}, \cite[Theorem 2.3]{SSU2001b}) for $C^{1+\alpha}$ conformal IFSs on $\R$.

Now a natural question arises how to verify the GTC for a parametrized family of $C^1$ IFSs. In what follows we investigate this question for certain translational families of $C^1$ IFSs.

First we introduce some definitions.

\begin{de}
{\rm Let $\mathcal F=\{f_i\}_{i=1}^\ell$ be a $C^1$ IFS on a compact set $Z\subset \R^d$ such  that $f_i(Z)\subset {\rm int}(Z)$ for each $i$.  Set
\begin{equation}
\label{e-translation}
f_i^{\pmb{\mathfrak{t}}}:=f_i+\mathbf{t}_i, \quad i=1,\ldots, \ell,
\end{equation}
 where $\pmb{\mathfrak{t}}=(\mathbf{t}_1,\ldots, \mathbf{t}_\ell)\in \R^{\ell d}$ with ${\bf t}_i\in \R^d$.  By continuity, there is a small $r_0>0$  such that $f_i^{\pmb{\mathfrak{t}}}(Z)\subset {\rm int}(Z)$ for every $\pmb{\mathfrak{t}}$ with $|\pmb{\mathfrak{t}}|<r_0$ and   every $i$, where $|\cdot|$ is the Euclidean norm.  Set $\mathcal F^{\pmb{\mathfrak{t}}}=\{f_i^{\pmb{\mathfrak{t}}}\}_{i=1}^\ell$ for each $\pmb{\mathfrak{t}}$ with $|\pmb{\mathfrak{t}}|<r_0$.
 We call  $(\mathcal F^{\pmb{\mathfrak{t}}})_{\pmb{\mathfrak{t}}\in \Delta}$, where $\Delta:=\{\pmb{\mathfrak{s}}\in \R^{\ell d}:\; |\pmb{\mathfrak{s}}|<r_0\}$, a  {\it translational family of IFSs generated by $\mathcal F$}.}
\end{de}

\begin{de}
{\rm
Let $\mathcal F=\{f_i\}_{i=1}^\ell$ be a $C^1$ IFS on a compact set $Z\subset \R^d$. We say that $\mathcal F$ is {\it dominated lower triangular},  if for each $z\in Z$ and $i\in \{1,\ldots, \ell\}$, the Jacobian  $D_zf_i$ of $f_i$ at $z$ is a lower triangular matrix such that
    $$
    |(D_zf_i)_{11}|\geq |(D_zf_i)_{22}|\geq \cdots\geq  |(D_zf_i)_{dd}|.
    $$
    }
\end{de}

We remark that in the above definition, the condition for an IFS to be  dominated lower triangular is slightly weaker than that required in \cite{FalconerFraserLee2020, JurgaLee2020}.

\begin{de}
\label{de-1.5}
{\rm Let $\ell\in \N$ with $\ell\geq 2$. Assume  for $j=1,\ldots, n$,  $\mathcal F_j=\{f_{i,j}\}_{i=1}^\ell$ is an IFS on $Z_j\subset \R^{q_j}$. Let $\mathcal F=\{f_i\}_{i=1}^\ell$ be an IFS on $Z_1\times\cdots\times Z_n\subset \R^{q_1}\times\cdots\times \R^{q_n}$ given by
$$
f_i(x_1,\ldots, x_n)=(f_{i,1}(x_1),\ldots, f_{i,n}(x_n)), \quad i=1,\ldots, \ell, \; x_k\in Z_k \mbox{ for }1\leq k\leq n.
$$
We say that $\mathcal F$ is the {\it direct product} of $\mathcal F_1,\ldots, \mathcal F_n$, and write $\mathcal F=\mathcal F_1\times\cdots\times \mathcal F_n$.
}
\end{de}

Now we are ready to state the second main result of the paper.

\begin{thm}
\label{thm-1.2}
Let $\mathcal F=\{f_i\}_{i=1}^\ell$ be a $C^1$ IFS on a compact set $Z\subset \R^d$ such that $f_i(Z)\subset {\rm int} (Z)$ for each $i$. Suppose either one of the following 3 conditions holds:
\begin{itemize}
\item[(i)]  $\mathcal F$ is  dominated lower triangular on $Z$ satisfying
\begin{equation}
\label{e-ratio}
\max_{i\neq j}\left(\sup_{y,z\in Z} \|D_yf_i\|+\|D_zf_j\|\right)<1,
 \end{equation}
 and $Z$ is convex.
 \item[(ii)]  $\mathcal F$ is  a $C^1$ conformal IFS on $Z$ satisfying \eqref{e-ratio}, and $Z$ is connected.
\item[(iii)] $\mathcal F=\mathcal F_1\times\cdots\times \mathcal F_n$, where for each $k\in \{1,\ldots, n\}$,  $\mathcal F_k$ is a $C^1$ IFS on a compact $Z_k\subset \R^{d_k}$ satisfying either (i) or (ii), in which $\mathcal F$ and $Z$ are replaced by $\mathcal F_k$ and $Z_k$ respectively.
\end{itemize}
Then there is a small $r_0>0$ such that the translational family $$\mathcal F^{\tt}=\{f_i+\mathbf{t}_i\}_{i=1}^\ell,\quad
\pmb{\mathfrak{t}}=(\mathbf{t}_1,\ldots, \mathbf{t}_\ell)\in \Delta:=\{\pmb{\mathfrak{s}}\in \R^{\ell d}:\; |\pmb{\mathfrak{s}}|<r_0\},$$
satisfies the GTC with respect to the Lebesgue measure $\mathcal L_{\ell d}$ on $\Delta$. As a consequence, the conclusions of Theorem \ref{thm-f1.1} hold for the family $(\mathcal F^{\tt})_{\tt\in \Delta}$.
\end{thm}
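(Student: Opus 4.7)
The plan is to verify the GTC (Definition~\ref{de-1.2}) of the translational family with respect to $\mathcal L_{\ell d}$ separately in each of cases (i)--(iii); the dimension consequences then follow from Theorem~\ref{thm-f1.1}. Fix distinct $\bi, \bj \in \Sigma$, set $\mathbf{k} := \bi \wedge \bj$, and write $\bi = \mathbf{k} a \bi'$, $\bj = \mathbf{k} b \bj'$ with $a \ne b$. Putting $y^{\tt} := \Pi^{\tt}(a\bi')$ and $z^{\tt} := \Pi^{\tt}(b\bj')$, one has $\Pi^{\tt}(\bi) - \Pi^{\tt}(\bj) = f^{\tt}_{\mathbf{k}}(y^{\tt}) - f^{\tt}_{\mathbf{k}}(z^{\tt})$, so the mean value theorem gives $|\Pi^{\tt}(\bi) - \Pi^{\tt}(\bj)|$ of order $|D_{\xi} f^{\tt}_{\mathbf{k}}(y^{\tt} - z^{\tt})|$ at some intermediate point $\xi$. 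The overall strategy is (a) to replace $D_\xi f^{\tt}_{\mathbf{k}}$ by the base derivative $D f^{\tt_0}_{\mathbf{k}}$ up to a multiplicative factor $e^{|\mathbf{k}|\psi(\delta)}$ for $\tt \in B(\tt_0, \delta)$ via bounded-distortion-style estimates combined with the strong contraction \eqref{e-ratio}; and (b) to exploit the nondegenerate dependence of $y^{\tt}-z^{\tt}$ on the first-level translations $\mathbf{t}_a$ and $\mathbf{t}_b$: since $y^{\tt} = \mathbf{t}_a + f_a(\Pi^{\tt}(a\bi''))$ (writing $\bi' = a\bi''$ by abuse), we have $\partial y^{\tt}/\partial \mathbf{t}_a = I + O(\theta)$, so the map $\mathbf{t}_a \mapsto y^{\tt}$ is $(1-\theta)$-bi-Lipschitz when the remaining coordinates of $\tt$ are frozen.

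In the conformal case (ii), a Koebe-type distortion bound --- valid for $C^1$ conformal maps on a connected set under \eqref{e-ratio} --- gives $D_z f^{\tt}_{\mathbf{k}} = \lambda^{\tt}_{\mathbf{k}}(z)\,O^{\tt}_{\mathbf{k}}(z)$ with $O^{\tt}_{\mathbf{k}}(z)$ orthogonal and $\lambda^{\tt}_{\mathbf{k}}(z) \asymp \|D f^{\tt_0}_{\mathbf{k}}\|$ uniformly on $Z$, up to the stated $e^{|\mathbf{k}|\psi(\delta)}$ factor. Fubini over the $\ell d - d$ translation coordinates other than $\mathbf{t}_a$ then yields a bound of the form $C\,e^{|\mathbf{k}|\psi(\delta)} \min(1,\, r^d/\|D f^{\tt_0}_{\mathbf{k}}\|^d)$ for the measure in \eqref{e-f2}, which matches $Z^{\tt_0}_{\mathbf{k}}(r)$ up to absorbable constants since $\phi^k(D f^{\tt_0}_{\mathbf{k}}) \asymp \|D f^{\tt_0}_{\mathbf{k}}\|^k$ in the conformal case.

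Case (i) uses the lower triangular structure coordinate-by-coordinate. Because $(f^{\tt}_i)_m$ depends only on the first $m$ components of its input and on $(\mathbf{t}_i)_m$, the $m$-th coordinate of $\Pi^{\tt}(\bi) - \Pi^{\tt}(\bj)$ depends on $\tt$ only through the $m$-th coordinates of the translations $\mathbf{t}_i$. A distortion argument combined with the domination condition shows $\phi^k(D f^{\tt_0}_{\mathbf{k}}) \asymp \prod_{m=1}^k a^{(m)}_{\mathbf{k}}$, where $a^{(m)}_{\mathbf{k}}$ denotes any $m$-th diagonal entry of $D_z f^{\tt_0}_{\mathbf{k}}$, all such entries being comparable thanks to domination and convexity of $Z$. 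Applying Fubini from the top coordinate down --- each integration contributing roughly $\min(1,\, r/a^{(m)}_{\mathbf{k}})$ --- and taking the optimal stopping level $k$ reproduces $\min_{0 \le k \le d} r^k/\phi^k(D f^{\tt_0}_{\mathbf{k}})$. Case (iii) reduces to (i) and (ii): since $\Pi^{\tt}$ factors as $(\Pi^{\tt_1}_1, \ldots, \Pi^{\tt_n}_n)$ with each $\tt_j$ the Lebesgue-independent translations for the $j$-th factor, the product of per-factor transversality bounds, combined with an elementary rearrangement of $\min_k r^k/\phi^k$, matches $Z^{\tt_0}_{\mathbf{k}}(r)$ for the composite IFS.

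The principal obstacle is the lack of $C^{1+\alpha}$ regularity: the classical Koebe/bounded distortion machinery does not apply verbatim, and one must instead derive iterated distortion estimates from uniform continuity of $z \mapsto D_z f_i$ combined with \eqref{e-ratio}. Since diameters of cylinders $f^{\tt}_{\mathbf{k}}(Z)$ shrink much faster than the modulus of continuity of the derivatives, the distortion accumulated along a word of length $|\mathbf{k}|$ is controlled by $e^{|\mathbf{k}|\psi(\delta)}$ with $\psi(\delta) \to 0$, matching the form of \eqref{e-f2}. A secondary difficulty is arranging the Fubini integrations so as to recover the precise $\min_{0 \le k \le d} r^k/\phi^k(\cdot)$ shape of $Z^{\tt_0}_{\mathbf{k}}(r)$; in the triangular and product cases this requires careful bookkeeping of how the $d$ transversal directions in $\R^{\ell d}$ distribute across coordinates or factors of the IFS.
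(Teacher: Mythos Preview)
Your high-level plan---distortion estimates to replace $D f^{\tt}_{\mathbf k}$ by $D f^{\tt_0}_{\mathbf k}$, then transversality of $\Pi^{\tt}(\ba)-\Pi^{\tt}(\bb)$ in the translation parameters, then Fubini---matches the paper's architecture. But there is a genuine gap at the transversality step, and your treatment of case~(i) diverges from the paper in a way that introduces an error.

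\textbf{The gap: bi-Lipschitz in $\mathbf t_a$ is not justified under \eqref{e-ratio}.} You assert $\partial y^{\tt}/\partial\mathbf t_a = I + O(\theta)$ and conclude that $\mathbf t_a\mapsto y^{\tt}$ is bi-Lipschitz. First, what is actually needed is that $\mathbf t_a\mapsto y^{\tt}-z^{\tt}$ be bi-Lipschitz, and $z^{\tt}=\Pi^{\tt}(b\bj')$ also depends on $\mathbf t_a$ through every occurrence of the letter $a$ in $\bj'$. Second, even for $y^{\tt}$ alone, the chain rule gives $\partial y^{\tt}/\partial\mathbf t_a = I + \sum_{n\ge 1:\,(a\bi')_{n+1}=a}\prod_{j=1}^n Df_{(a\bi')_j}$, whose norm is bounded only by $\theta/(1-\theta)$; this is $<1$ iff $\theta<1/2$, which is strictly stronger than \eqref{e-ratio}. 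The paper handles this via Lemma~\ref{Wa48}: writing $g(\tt)=\Pi^{\tt}(\ba)-\Pi^{\tt}(\bb)$ with $a_1\ne b_1$, one has $\partial g/\partial\mathbf t_{k}=\pm I + E_k(\tt)$ for $k\in\{a_1,b_1\}$, and a telescoping identity (due to Solomyak) shows $\sum_{k}\lambda_k(1-\rho_k)\le \rho_{a_1}+\rho_{b_1}\le\rho<1$ with $\lambda_k\ge\|E_k\|$, forcing $\|E_{k^*}\|<\rho$ for at least one $k^*\in\{a_1,b_1\}$. This adaptive choice of $k^*$ depending on $\ba,\bb$ is the missing idea; without it your argument only works when all $\rho_i<1/2$.

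\textbf{Case (i): the coordinate-by-coordinate Fubini is not what the paper does, and your dependence claim is false.} You write that the $m$-th coordinate of $\Pi^{\tt}(\bi)-\Pi^{\tt}(\bj)$ depends on $\tt$ only through the $m$-th coordinates of the translations. In fact, lower-triangularity gives that $(f_i)_m$ depends on the \emph{first $m$} input coordinates, so under composition the $m$-th output coordinate depends on $(\mathbf t_i)_1,\ldots,(\mathbf t_i)_m$ for all $i$. This spoils the clean product structure you describe. The paper instead uses a single $d$-dimensional change of variables $T_{k^*}(\tt)=(g(\tt),\mathbf t_2,\ldots,\mathbf t_\ell)$ (Lemma~\ref{lem: det T'}), together with an elementary ellipsoid volume estimate $\mathcal L_d\big(A^{-1}B(0,r)\cap B(0,R)\big)\le 2^d\min_k r^kR^{d-k}/\phi^k(A)$ (Lemma~\ref{lem-geom}), to obtain Lemma~\ref{lem-key} directly. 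The triangular structure enters only in the distortion step (Proposition~\ref{pro-3.2}), where one must control $\|D_y f^{\ss}_{\pmb\omega}\cdot(D^*_{z_1,\ldots,z_d}f^{\tt}_{\pmb\omega})^{-1}\|$; this is where convexity of $Z$ and the domination hypothesis on the diagonal entries are used, via explicit bounds on the entries and cofactors of a product of dominated triangular matrices (Lemmas~\ref{lem-5.1}--\ref{lem-a2}). Your sketch of the distortion step (``diameters shrink faster than the modulus of continuity'') is correct for the diagonal entries but does not by itself control the off-diagonal growth or the inverse.

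Your treatment of cases~(ii) and~(iii) is essentially the paper's: for~(ii), distortion is scalar and one only needs $\min(1,r^d/\|Df^{\tt_0}_{\mathbf k}\|^d)$, which equals $Z^{\tt_0}_{\mathbf k}(r)$ in the conformal case; for~(iii), the product structure of $\Pi^{\tt}$ together with the identity $\min_{0\le p\le d}r^p/\phi^p(\mathrm{diag}(A_1,\ldots,A_n))=\prod_j\min_{0\le p\le d_j}r^p/\phi^p(A_j)$ gives the GTC as a product of the factor GTCs (Proposition~\ref{pro-4.2}).
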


The above theorem is a (partial) non-linear  extension of  the corresponding results in \cite{Falconer88, solomyak1998measure, JordanPollicottSimon07} for affine IFSs. Recall that in the case when $\mathcal F=\{f_i(x)=A_ix+a_i\}_{i=1}^\ell$ is an affine IFS on $\R^d$, under the assumption that
\begin{equation}
\label{e-Fbound}
\max_{1\leq i\leq \ell}\|A_i\|<1/3,
\end{equation}
 Falconer \cite{Falconer88} proved that the dimension of the attractor of $\mathcal F^{\tt}=\{f_i+{\bf t}_i\}_{i=1}^\ell$ is equal to its affinity dimension for ${\mathcal L}_{\ell d}$-a.e.~$\tt=({\bf t}_1,\ldots, {\bf t}_\ell)\in \R^{\ell d}$.  Later Solomyak \cite{solomyak1998measure}  pointed out that the bound $1/3$ in \eqref{e-Fbound} can be replaced by $1/2$. By an observation of Edgar \cite{edgar1992fractal}, $1/2$ is optimal. Under the same assumption that
 \begin{equation}
\label{e-Fbound1}
\max_{1\leq i\leq \ell}\|A_i\|<1/2,
\end{equation}
 Jordan, Pollicott and Simon \cite{JordanPollicottSimon07} showed that the translational family $(\mathcal F^{\tt})_{\tt\in \R^{\ell d}}$ satisfies the self-affine transversality condition. It was pointed out in \cite[Theorem 9.1.2]{BSS2021} that the assumption \eqref{e-Fbound1} can by further replaced by a slightly more general condition $\max_{i\neq j}(\|A_i\|+\|A_j\|)<1$.

 We remark that Theorem \ref{thm-1.2} also extends the results of Simon, Solomyak and Urba\'{n}ski (\cite[Proposition 7.1]{SSU2001a}, \cite[Corollary 7.3]{SSU2001b}) for  translational families of $C^{1+\alpha}$ conformal IFSs on $\R$.
It is worth pointing out that for every $C^1$ conformal IFS satisfying the open set condition (or  $C^1$ conformal expanding map), the dimension of its attractor (or repeller) satisfies the Bowen-Ruelle formula, and is equal to the singularity dimension; meanwhile the dimension of ergodic invariant measures on the attractor (repeller) is given by the Lyapunov dimension  (see \cite{Bowen1979,  Ruelle1982, GatzourasPeres1997, Patzschke1997}).

The paper is organized as follows. In Section~\ref{S-2}, we give some preliminaries, including the variational principle for sub-additive topological pressure, the definitions and properties of singularity dimension and Lyapunov dimension. In Section~\ref{S-f1.1}, we prove Theorem \ref{thm-f1.1}. The proof of Theorem \ref{thm-1.2} is rather long and will be given in Sections \ref{S-triangular}-\ref{S-thm-1.2}, where we divide the whole proof into 3 different parts, by considering the conditions (i)-(iii) in Theorem \ref{thm-1.2} separately.

\section{Preliminaries}
\label{S-2}
\subsection{Variational principle for sub-additive pressure}
\label{subsec:TF}
In order to define the singularity and Lyapunov dimensions,  we require some elements from the sub-additive thermodynamic formalism.

%The following definition of sub-additive pressure was given in \cite{CFH08} and then also in \cite{BanCaoHu10}.

Let $(\Sigma,\sigma)$ be the one-sided full shift over the alphabet $\{1,\ldots, \ell\}$. That is, $\Sigma=\{1,\ldots, \ell\}^{\Bbb N}$, which is endowed with the product topology, and $\sigma:\Sigma\to \Sigma$ is the left shift defined by $(x_i)_{i=1}^\infty\mapsto (x_{i+1})_{i=1}^\infty$.  Write $\Sigma_n=\{1,\ldots, \ell\}^n$ for $n\geq 0$, with  the convention $\Sigma_0=\{\varepsilon\}$, where $\varepsilon$ stands for the empty word. Set $\Sigma_*=\bigcup_{n=0}^\infty \Sigma_n$.  For $x=(x_i)_{i=1}^\infty\in \Sigma$ and $n\in {\Bbb N}$, write $x|n=x_1\ldots x_n$.

Let $C(\Sigma)$ denote the set of real-valued continuous functions on $\Sigma$. Let $\mathcal{G}=\left\{g_n\right\}_{n=1}^{\infty }$ be a {\it sub-additive potential} on $\Sigma$, that is,  $g_n\in C(\Sigma)$ for all $n\geq 1$ such that
  \begin{equation}\label{090}
    g_{m+n}(x) \leq g_{n}(x)+g_{m}(\sigma^nx) \quad\mbox{ for  all }  x\in \Sigma \mbox { and }n,m\in \N.
  \end{equation}
 The {\it topological pressure of $\mathcal G$} is defined by
 \begin{equation}
 \label{e-Falconer}
 P(\Sigma,\sigma, \mathcal G)= \lim_{n\to\infty} \frac{1}{n}\log\left( \sum_{I\in \Sigma_n} \sup_{x\in [I]} \exp(g_n(x)) \right),
\end{equation}
where $[I]:=\{x\in \Sigma:\; x|n=I\}$ for $I\in \Sigma_n$.
The limit can be seen to exist by using a standard sub-additivity argument.

If the potential $\mathcal G$ is additive, i.e. $g_n=\sum_{k=0}^{n-1}g\circ \sigma^k$ for some  $g\in C(\Sigma)$, then $P(\Sigma, \sigma, \mathcal G)$ recovers the classical topological pressure  $P(\Sigma, \sigma, g)$ of $g$ (see e.g.~\cite{Walters82}).

Let  $\mathcal{M}(\Sigma,\sigma)$ denote the set of  $\sigma$-invariant Borel probability measures on $\Sigma$.
For $\mu\in \mathcal{M}(\Sigma,\sigma)$, let $h_\mu(\sigma)$  denote the measure-theoretic entropy of $\mu$ (cf. \cite{Walters82}). Moreover, for  $\mu \in\mathcal M(\Sigma, \sigma)$, by sub-additivity,
 \begin{equation}
 \label{e-N1}
 \mathcal{G}_*(\mu):=\lim_{n\to\infty} \frac{1}{n} \int g_n d\mu=\inf_n  \frac{1}{n} \int g_n d\mu\in [-\infty,\infty).
 \end{equation}
 See e.g.~\cite[Theorem 10.1]{Walters82}.  We call  $\mathcal{G}_*(\mu)$  the {\it Lyapunov exponent} of $\mathcal{G}$ with respect to $\mu$.

The following variational principle for the topological pressure of sub-additive potentials generalizes the classical variational principle  for additive potentials (\cite{Ruelle1973, Walters1975}):
\begin{thm} [\cite{CFH08}]
\label{thm:sub-additive-VP}  %\label{r88}
Let $\mathcal{G}=\left\{g_n\right\}_{n=1}^{\infty }$ be a sub-additive potential on  $(\Sigma,\sigma)$. Then
  \begin{equation}\label{variational-principle}
    P(\Sigma, \sigma,\mathcal{G})=
    \sup\left\{h_\mu(\sigma)+\mathcal{G}_*(\mu):\;
    \mu\in\mathcal{M}(\Sigma,\sigma)
    \right\}.
  \end{equation}
\end{thm}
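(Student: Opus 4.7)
The plan is to establish the two inequalities separately. The direction $\sup_\mu\{h_\mu(\sigma)+\mathcal G_*(\mu)\}\le P(\Sigma,\sigma,\mathcal G)$ is elementary. Fix $\mu\in\M(\Sigma,\sigma)$ and let $\xi_n$ be the partition of $\Sigma$ into $n$-cylinders. Applying the classical inequality $\sum_I p_I(a_I-\log p_I)\le\log\sum_I e^{a_I}$ with $p_I=\mu([I])$ and $a_I=\sup_{x\in[I]}g_n(x)$, combined with $\int g_n\,d\mu\le\sum_I\mu([I])\sup_{[I]}g_n$, gives
$$H_\mu(\xi_n)+\int g_n\,d\mu\ \le\ \log\sum_{I\in\Sigma_n}\exp\bigl(\sup_{x\in[I]}g_n(x)\bigr).$$
Divide by $n$ and send $n\to\infty$: the right side tends to $P(\Sigma,\sigma,\mathcal G)$ by \eqref{e-Falconer}, $\tfrac1n\int g_n\,d\mu\to\mathcal G_*(\mu)$ by \eqref{e-N1}, and $\tfrac1n H_\mu(\xi_n)\to h_\mu(\sigma,\xi)=h_\mu(\sigma)$ since the one-cylinder partition $\xi=\{[1],\ldots,[\ell]\}$ is a generator.

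For the reverse inequality I would construct approximate equilibrium states. For each $n\in\N$, pick $x_I\in[I]$ with $g_n(x_I)\ge\sup_{[I]}g_n-1/n$ and set
$$\nu_n\ :=\ \frac{1}{Z_n}\sum_{I\in\Sigma_n}e^{g_n(x_I)}\delta_{x_I},\qquad Z_n:=\sum_{I\in\Sigma_n}e^{g_n(x_I)},$$
so that $\tfrac1n\log Z_n\to P(\Sigma,\sigma,\mathcal G)$ and a direct computation yields the Gibbs identity $H_{\nu_n}(\xi_n)+\int g_n\,d\nu_n=\log Z_n$. Put $\mu_n:=\tfrac1n\sum_{k=0}^{n-1}\sigma^k_*\nu_n$; by weak-$*$ compactness of $\M(\Sigma)$, extract a subsequence $\mu_{n_j}\rightharpoonup\mu$, which is $\sigma$-invariant because $\sigma_*\mu_n-\mu_n=\tfrac{1}{n}(\sigma^n_*\nu_n-\nu_n)$ vanishes in total variation. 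It then suffices to prove $\limsup_j\tfrac{1}{n_j}[H_{\nu_{n_j}}(\xi_{n_j})+\int g_{n_j}\,d\nu_{n_j}]\le h_\mu(\sigma)+\mathcal G_*(\mu)$, and I would bound the two summands separately.

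For the entropy piece I would invoke the Misiurewicz blocking trick: for fixed $q\in\N$ and $n\gg q$, subadditivity of Shannon entropy under refinements, averaging over the $q$ block offsets $s\in\{0,\ldots,q-1\}$, and concavity of $\eta\mapsto H_\eta(\xi_q)$ yield
$$\frac{q}{n}H_{\nu_n}(\xi_n)\ \le\ H_{\mu_n}(\xi_q)+O\!\left(\frac{q^2\log\ell}{n}\right).$$
Since $\xi_q$ is a clopen partition of $\Sigma$, the map $\eta\mapsto H_\eta(\xi_q)$ is continuous on $\M(\Sigma,\sigma)$, so passing to the subsequence and then letting $q\to\infty$ gives $\limsup_j\tfrac{1}{n_j}H_{\nu_{n_j}}(\xi_{n_j})\le h_\mu(\sigma,\xi)=h_\mu(\sigma)$. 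For the potential piece, subadditivity \eqref{090} applied with shifted start points $s\in\{0,\ldots,q-1\}$ (so that each $g_q\circ\sigma^i$ with $0\le i\le n-q$ appears once after averaging over $s$) gives
$$\frac{1}{n}\int g_n\,d\nu_n\ \le\ \frac{1}{q}\int g_q\,d\mu_n+O\!\left(\frac{\|g_q\|_\infty}{n}\right).$$
Taking $n_j\to\infty$ using continuity of $\eta\mapsto\int g_q\,d\eta$, then $\inf_q$, gives the bound by $\mathcal G_*(\mu)$. Summing produces the reverse inequality.

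The main obstacle is the lower-bound direction, and within it the careful bookkeeping in the two blocking estimates: because $\mathcal G$ is only subadditive rather than additive, the natural object $\tfrac1n\int g_n\,d\mu$ does not simplify to a time-average of a single function, and both the entropy and potential terms must be first compared at a fixed finite scale $q$ before taking $n_j\to\infty$, with the errors $O(q^2/n)$ and $O(\|g_q\|_\infty/n)$ only disappearing after a subsequent $q\to\infty$. The key structural inputs that make this work on $\Sigma$ are the generating property of $\xi$, the clopenness of cylinders (giving genuine continuity, not merely upper semi-continuity, of $H_{(\cdot)}(\xi_q)$), and the fact that $\mathcal G_*$ is an infimum of continuous linear functionals on $\M(\Sigma,\sigma)$, hence upper semi-continuous.
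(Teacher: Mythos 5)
Your proposal is correct, and it is worth noting that the paper itself offers no proof of this statement: it is quoted from \cite{CFH08}, where it is established for sub-additive potentials on an arbitrary continuous map of a compact metric space. What you have written is essentially the proof of \cite{CFH08} specialized to the full shift, namely the Misiurewicz-style argument: the easy direction via $\sum_I p_I(a_I-\log p_I)\le\log\sum_I e^{a_I}$ applied to $n$-cylinders, and the hard direction via the approximate Gibbs measures $\nu_n$, their Ces\`{a}ro averages $\mu_n$, and the two blocking estimates at a fixed scale $q$ followed by $n\to\infty$ and then $q\to\infty$. The simplifications you exploit are exactly the ones the shift setting legitimately provides, and you identify them correctly: cylinders are clopen, so $\eta\mapsto H_\eta(\xi_q)$ is genuinely weak-$*$ continuous (no boundary issues, which is where the general compact-space proof in \cite{CFH08} has to work much harder), and the one-cylinder partition is a generator, so $\tfrac1n H_\mu(\xi_n)\to h_\mu(\sigma)$ by Kolmogorov--Sinai. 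Two cosmetic points: in the potential blocking estimate the error term should be controlled by $\max_{0\le k\le q}\|g_k\|_\infty$ (coming from the initial segment $g_s$ and the final remainder block), not just $\|g_q\|_\infty$; this is harmless since by sub-additivity $\|g_k\|_\infty\le k\|g_1\|_\infty$, so the error is still $O_q(1)/n$. And one should remark that the degenerate case $\mathcal{G}_*(\mu)=-\infty$ causes no trouble in either direction, since the entropy terms are uniformly bounded by $\log\ell$, so no $\infty-\infty$ cancellation ever arises. With these trivial touch-ups your argument is a complete, self-contained proof of Theorem \ref{thm:sub-additive-VP} in the form the paper uses it.
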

Although in \cite{CFH08} this is proved for sub-additive potentials on an arbitrary continuous dynamical system on a compact space, we state it only for  shift spaces. Particular cases of the above result, under stronger assumptions on the potentials, were previously obtained by many authors, see for example \cite{Falconer88b, FengLau2002, Feng2004, Kaenmaki04, Mummert06, Barreira10} and references therein.

Measures that achieve the supremum in \eqref{variational-principle}  are called  {\em  equilibrium measures} for the potential ${\mathcal G}$. There exists at least one ergodic equilibrium measure;  see e.g.~\cite[Proposition 3.5]{Feng11} and the remark there.

\subsection{Singularity dimension and Lyapunov dimension with respect to  $C^1$ IFSs}
\label{S-2.3}
In this subsection, we define the singularity  and Lyapunov dimensions with respect to $C^1$ IFSs.

Let $\mathcal F=\{f_i\}_{i=1}^\ell$ be a $C^1$ IFS on a compact set $Z\subset \R^d$ and let $K$ denote the attractor of $\mathcal F$ (cf. Section~\ref{S-1}). Let $(\Sigma,\sigma)$ be the one-sided full shift over the alphabet $\{1,\ldots, \ell\}$ and  let $\Pi: \Sigma\to K$ be the coding map defined as in \eqref{e-code}.

Let $\R^{d\times d}$ denote the collection of $d\times d$ real matrices.  For $T\in \R^{d\times d}$, let $$\alpha_1(T)\geq\cdots\geq \alpha_d(T)$$ denote the  singular values of $T$. Following \cite{Falconer88},  for $s\geq 0$ we define the {\it singular value function} $\phi^s:\; \R^{d \times d}\to [0,\infty)$ as
\begin{equation}
\label{e-singular}
\phi^s(T)=\left\{
\begin{array}
{ll}
\alpha_1(T)\cdots \alpha_k(T) \alpha_{k+1}^{s-k}(T) & \mbox{ if }0\leq s< d,\\
\det(T)^{s/d} & \mbox{ if } s\geq d,
\end{array}
\right.
\end{equation}
where $k=[s]$ is the integral part of $s$. Here we make the convention that $0^0=1$. The following result on $\phi^s$ is well known; see e.g. \cite{Falconer88}.

\begin{lem}
\label{lem-inequality}
\begin{itemize}
\item[(i)] $\phi^s(ST)\leq \phi^s(S)\phi^s(T)$ for all $S, T\in \R^{d\times d}$ and $s\geq 0$.
\item[(ii)] $\phi^{s+t}(T)\leq \phi^s(T)\|T\|^t$ for all $T\in \R^{d\times d}$, $s,t\geq 0$.
\end{itemize}
\end{lem}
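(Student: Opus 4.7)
The plan is to handle the two parts separately, as they are both classical facts about the singular value function and reduce to elementary manipulations once the right identification is made.

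For part (i), the main tool is the exterior algebra identity $\alpha_1(A)\cdots \alpha_k(A)=\|\wedge^k A\|$, where $\wedge^k A$ denotes the $k$-th exterior power of $A$ acting on $\wedge^k \R^d$. Since exterior powers are multiplicative, $\wedge^k(ST)=(\wedge^k S)(\wedge^k T)$, so submultiplicativity of the operator norm yields
$$
\alpha_1(ST)\cdots \alpha_k(ST)\leq \bigl(\alpha_1(S)\cdots \alpha_k(S)\bigr)\bigl(\alpha_1(T)\cdots \alpha_k(T)\bigr)
$$
for every $1\leq k\leq d$. Given $s\in [0,d)$ with integer part $k=[s]$, I would rewrite $\phi^s(T)$ as the weighted geometric mean
$$
\phi^s(T)=\bigl(\alpha_1(T)\cdots \alpha_k(T)\bigr)^{k+1-s}\bigl(\alpha_1(T)\cdots \alpha_{k+1}(T)\bigr)^{s-k},
$$
apply the two submultiplicativity inequalities (for the indices $k$ and $k+1$), and multiply. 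For $s\geq d$ the inequality becomes an equality by multiplicativity of the determinant.

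For part (ii), my plan is to verify that $s\mapsto \log \phi^s(T)$ is a concave, piecewise linear function of $s$ (with a smooth pure power tail for $s\geq d$) whose slope is everywhere at most $\log \|T\|=\log \alpha_1(T)$. On each interval $[k,k+1)$ with $k<d$, $\log\phi^s(T)$ has slope $\log \alpha_{k+1}(T)\leq \log\alpha_1(T)$, and for $s\geq d$ the slope is $\tfrac{1}{d}\log|\det T|=\tfrac{1}{d}\sum_{j=1}^d\log\alpha_j(T)\leq \log\alpha_1(T)$. The slopes are moreover non-increasing as $s$ grows, so the function is concave. Integrating the slope bound from $s$ to $s+t$ gives $\log\phi^{s+t}(T)-\log\phi^s(T)\leq t\log\|T\|$, which is exactly the claim.

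I do not expect any real obstacle: both parts are standard lemmas that Falconer already states in \cite{Falconer88}, and the only place where one must be a little careful is keeping track of the integer-part indexing in the definition of $\phi^s$ when $s$ crosses an integer or the value $d$. The cleanest way to avoid a case split in part (ii) is the slope-monotonicity viewpoint above, which absorbs all the boundary cases into one statement.
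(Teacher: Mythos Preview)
Your proposal is correct. The paper itself does not supply a proof of this lemma at all; it simply records the result as well known and cites \cite{Falconer88}. Your argument via exterior powers for part (i) and the slope-monotonicity (log-concavity) viewpoint for part (ii) is the standard one and goes through without difficulty; the only minor caveat is that the logarithmic argument in (ii) tacitly assumes $T$ is nonsingular, but the singular case is trivial since then both sides vanish (or the right-hand side dominates) for the relevant range of $s,t$.
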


For a differentiable mapping $f:\; U\subset \R^d\to \R^d$, let $D_zf$ denote the differential of $f$ at $z\in U$. Sometimes we also write $f'(z)$ for $D_zf$, and also call $D_zf$ the Jacobian matrix of $f$ at $z$. Below we introduce the concepts of singularity and Lyapunov dimensions.

\begin{de}
\label{de-1}
{\rm
 The {\it singularity dimension of  $\mathcal F=\{f_i\}_{i=1}^\ell$}, written as  $\dim_S\mathcal F$, is the unique non-negative value $s$ for which
$$
P(\Sigma,\sigma, \mathcal G^s)=0,
$$
where $\mathcal G^s=\{g_n^s\}_{n=1}^\infty$ is the sub-additive potential on $\Sigma$ defined by
\begin{equation}
\label{e-gn}
g_n^s(x)=\log \phi^s(D_{\Pi\sigma^n x}f_{x|n}), \quad x\in \Sigma,
\end{equation}
with $f_{x|n}:=f_{x_1}\circ \cdots\circ f_{x_n}$ for $x=(x_n)_{n=1}^\infty$.
}
\end{de}

\begin{de}
\label{de-2}
{\rm
Let $\mu$ be a $\sigma$-invariant Borel probability measure on $\Sigma$. The {\it Lyapunov dimension of $\mu$ with respect to $\mathcal F=\{f_i\}_{i=1}^\ell$}, written  as $\dim_{\rm L, \mathcal F} \mu$, is the unique non-negative value $s$ for which
$$
h_\mu(\sigma)+ \mathcal G^s_*(\mu)=0,
$$
where $\mathcal G^s=\{g^s_n\}_{n=1}^\infty$ is defined as in  \eqref{e-gn} and $\mathcal G^s_*(\mu):=\lim_{n\to \infty} \frac{1}{n}\int g^s_n\; d\mu$.
}
\end{de}

%\begin{figure}
 % \centering
 % \includegraphics[width=10cm]{figure_2}
 % \caption{The connection between Lyapunov dimension, entropy and the function  $s\mapsto -\mathcal{G}^s_*(m)$  when $d=2$.}\label{figure-1}
% \end{figure}

\begin{rem}
{\rm
\begin{itemize}
\item[(i)] It is not hard to show that there exist $a<b<0$ such that
$$
nsa\leq g_n^s(x)\leq ns b,\qquad g_n^{s+t}(x)\leq  g_n^s(x)+ntb
$$
for all $x\in \Sigma$, $n\in \N$ and $s,t\geq 0$, where $g_n^s(x)$ is defined as in \eqref{e-gn}. The existence and uniqueness of $s$ in Definitions \ref{de-1}-\ref{de-2} just follow from this fact.

\item[(ii)] The concept of singularity dimension was first introduced by Falconer \cite{Falconer88, Falconer94}; see also \cite{KaenmakiVilppolainen10}. It is also called {\it affinity dimension} in the case when the IFS $\{f_i\}_{i=1}^\ell$ is affine, that is, each map $f_i$ is affine.
\item[(iii)] The definition of Lyapunov dimension of invariant measures with respect to an IFS  presented above was adopted from \cite{JordanPollicott08}. It is a  generalization of that given in \cite{JordanPollicottSimon07} for affine IFSs.
\end{itemize}
}
\end{rem}

The following result describes  the relation between the singularity dimension and the Lyapunov dimension.
\begin{lem}
\label{lem-var}
Let $\mathcal F=\{f_i\}_{i=1}^\ell$ be a $C^1$ IFS on a compact subset $Z$ of $\R^d$. Suppose $\theta\in (0,1)$ is a common Lipschitz constant for $f_1,\ldots, f_\ell$. That is,
$$
|f_i(x)-f_i(y)|\leq \theta|x-y| \quad \mbox{ for all }1\leq i\leq \ell,\; x,y\in Z.
$$
Then the following properties hold.
\begin{itemize}
\item[(i)] $\dim_S\mathcal F=\sup\{\dim_{L,\mathcal F}\mu:\; \mu\in \mathcal M(\Sigma,\sigma)\}$.  The supremum is attained by at least one  ergodic measure.
\item[(ii)]
 $\dim_S\mathcal F\leq \frac{\log \ell}{\log (1/\theta)}$.
\end{itemize}
\end{lem}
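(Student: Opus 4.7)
The plan is to reduce both parts to the variational principle (Theorem~\ref{thm:sub-additive-VP}) together with the elementary monotonicity of $s\mapsto g_n^s$ stated in the preceding Remark.

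\textbf{Part (i).} Write $s^\ast:=\dim_S\mathcal F$, so that $P(\Sigma,\sigma,\mathcal G^{s^\ast})=0$. By Theorem~\ref{thm:sub-additive-VP} this equals $\sup_\mu\{h_\mu(\sigma)+\mathcal G^{s^\ast}_\ast(\mu)\}$, and in the sentence following that theorem it is recorded that the supremum is attained by at least one ergodic measure $\mu^\ast$. The key observation is that the function
\[
F_\mu(s):=h_\mu(\sigma)+\mathcal G^s_\ast(\mu)
\]
is strictly decreasing in $s$: indeed, by the Remark after Definition~\ref{de-2} there exists $b<0$ with $g_n^{s+t}(x)\le g_n^s(x)+ntb$, whence $\mathcal G^{s+t}_\ast(\mu)\le \mathcal G^s_\ast(\mu)+tb$. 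Moreover $F_\mu(s)\to-\infty$ as $s\to\infty$ (again by the Remark) and $F_\mu(0)=h_\mu(\sigma)\ge 0$, so $F_\mu$ has a unique zero, which by definition is $\dim_{L,\mathcal F}\mu$. Now for every $\mu\in\mathcal M(\Sigma,\sigma)$ the variational principle gives $F_\mu(s^\ast)\le 0$, and by monotonicity this yields $\dim_{L,\mathcal F}\mu\le s^\ast$. On the other hand $F_{\mu^\ast}(s^\ast)=0$, so $\dim_{L,\mathcal F}\mu^\ast=s^\ast$, showing simultaneously that the supremum equals $s^\ast$ and is attained by the ergodic measure $\mu^\ast$.

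\textbf{Part (ii).} Using Lemma~\ref{lem-inequality}(ii) with $s=0$ and exponent $t=s$, or directly from the definition, one checks the pointwise bound $\phi^s(T)\le \|T\|^s$ for all $T\in\R^{d\times d}$ and $s\ge 0$ (the case $0\le s<d$ is immediate because each $\alpha_j(T)\le\alpha_1(T)=\|T\|$; the case $s\ge d$ follows from $|\det T|=\alpha_1(T)\cdots\alpha_d(T)\le\|T\|^d$). The hypothesis that $\theta$ is a common Lipschitz constant gives $\|D_zf_i\|\le\theta$ for every $z\in Z$ and every $i$, so by sub-multiplicativity of the operator norm $\|D_zf_I\|\le \theta^n$ for $I\in\Sigma_n$. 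Combining these,
\[
\sup_{x\in[I]}\exp(g_n^s(x))=\sup_{x\in[I]}\phi^s(D_{\Pi\sigma^nx}f_I)\le \theta^{ns}.
\]
Summing over $I\in\Sigma_n$ and plugging into the definition \eqref{e-Falconer},
\[
P(\Sigma,\sigma,\mathcal G^s)\le \lim_{n\to\infty}\frac1n\log(\ell^n\theta^{ns})=\log\ell+s\log\theta.
\]
Setting $s=\dim_S\mathcal F$ makes the left hand side equal to zero, so $0\le \log\ell-\dim_S\mathcal F\cdot\log(1/\theta)$, which rearranges to the claimed inequality.

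\textbf{Expected obstacles.} Neither step should be technically hard: the variational principle is invoked verbatim and the existence of an ergodic maximizer is exactly the statement recorded after Theorem~\ref{thm:sub-additive-VP}. The only mild subtlety is the use of strict monotonicity of $F_\mu$ to transfer inequalities between $F_\mu(s^\ast)\le 0$ and $\dim_{L,\mathcal F}\mu\le s^\ast$; one must be careful to note that $F_\mu$ is finite on $[0,\infty)$ (which follows from the two-sided bounds in the Remark) so that its unique zero is well defined for every $\mu$, not merely for those with $\mathcal G^s_\ast(\mu)>-\infty$ at some particular $s$.
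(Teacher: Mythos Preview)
Your proof is correct and follows essentially the same approach as the paper's: both parts hinge on the variational principle, the existence of an ergodic equilibrium measure, and the strict monotonicity in $s$ coming from the bound $g_n^{s+t}\le g_n^s+ntb$. The only cosmetic difference is that in part~(i) the paper applies monotonicity to the pressure $s\mapsto P(\Sigma,\sigma,\mathcal G^s)$ (deducing $\dim_{L,\mathcal F}\mu\le\dim_S\mathcal F$ from $P(\Sigma,\sigma,\mathcal G^{\dim_{L,\mathcal F}\mu})\ge 0$), whereas you apply it to $s\mapsto F_\mu(s)$ for each fixed $\mu$; these are interchangeable.
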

\begin{proof}
Since $\theta$ is a common Lipschitz constant for $f_1,\ldots, f_\ell$, $\|D_zf_i\|\leq \theta$ for each
$1\leq i\leq \ell$ and $z\in Z$.  It follows from Lemma \ref{lem-inequality}(ii) that for $s_2>s_1\geq 0$,
$$
\phi^{s_2}(D_{\Pi\sigma^n x} f_{x|n})\leq \phi^{s_1}(D_{\Pi\sigma^n x} f_{x|n}) \theta^{n(s_2-s_1)} \quad\mbox{ for all }x\in \Sigma,\; n\in \N,
$$
from which we see that $$P(\Sigma,\sigma,\mathcal G^{s_2})\leq P(\Sigma,\sigma,\mathcal G^{s_1})-(s_2-s_1)\log (1/\theta).
$$
Hence $P(\Sigma, \sigma, \mathcal G^s)$ is strictly decreasing in $s$.

Now let  $\mu\in \mathcal M(\Sigma,\sigma)$. Write $s=\dim_{L,\mathcal F}\mu$. Then $h_\mu(\sigma)+\mathcal G^s_*(\mu)=0$. Applying  Theorem \ref{thm:sub-additive-VP} to  the sub-additive potential $\mathcal G^s$ yields that
$P(\Sigma, \sigma, \mathcal G^s)\geq 0$.  Hence $\dim_S\mathcal F\geq s=\dim_{L,\mathcal F}\mu$.  It follows that
$$\dim_S\mathcal F\geq \sup\{\dim_{L,\mathcal F}\mu:\; \mu\in \mathcal M(\Sigma,\sigma)\}.$$
To show that  equality holds, write $s'=\dim_S\mathcal F$. Let $\nu$ be an ergodic equilibrium measure for the potential $\mathcal G^{s'}$.  Then
$$0=P(\Sigma, \sigma, \mathcal G^{s'})=h_\nu(\sigma)+\mathcal G^{s'}_*(\nu),$$
which implies that  $\dim_{L, \mathcal F}\nu=s'$. That is, $\dim_{L, \mathcal F}\nu=\dim_S\mathcal F$. This completes the proof of (i).

To see (ii), notice that $\phi^{s'}(D_{\Pi\sigma^n x} f_{x|n})\leq \theta^{ns'}$ for all $x\in \Sigma$ and $n\in \N$.
 It follows from the definition of $P(\Sigma, \sigma, \mathcal G^{s'})$ that
$$
0=P(\Sigma, \sigma, \mathcal G^{s'})\leq \lim_{n\to \infty} \frac{1}{n} \log (\ell^n \theta^{ns'})=\log \ell +s'\log \theta,
$$
from which we obtain $s'\leq \frac{\log \ell}{\log (1/\theta)}$. This completes the proof of (ii).
\end{proof}

For a $\sigma$-invariant ergodic measure $\mu$ on $\Sigma$,  let $\Pi_*\mu$ denote the push-forward of $\mu$ by $\Pi$. In the following we present the main result obtained in the first part \cite{FengSimon2020} of our study on the dimension of $C^1$  iterated function systems:  the upper box-counting dimension of the attractor of $\mathcal F$ is bounded above by the singularity dimension of $\mathcal F$, whilst the upper packing dimension of $\Pi_*\mu$ is bounded above by the Lyapunov dimension of $\mu$.

\begin{thm}[\cite{FengSimon2020}]
\label{thm-FS}
Let $\mathcal F=\{f_i\}_{i=1}^\ell$ be a $C^1$ IFS  with attractor $K$, and let $\mu$ be a $\sigma$-invariant ergodic measure on $\Sigma$. Then the following properties hold.
\begin{itemize}
\item[(i)]  $\overline{\dim}_B K\leq \dim_S \mathcal F$.
\item[(ii)] $\overline{\dim}_P \Pi_*\mu\leq \dim_{L,\mathcal F}(\mu)$.
\end{itemize}
\end{thm}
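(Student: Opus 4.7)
The plan is to prove both parts by means of a covering/counting argument that converts the sub-additive pressure bound into a geometric dimension estimate, with the essential obstacle being that $C^1$ regularity gives only continuous (not Hölder) Jacobians, so classical bounded-distortion estimates are unavailable.

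For part (i), fix $s > \dim_S\mathcal F$. From the argument in the proof of Lemma \ref{lem-var}, $P(\Sigma,\sigma,\mathcal G^s)$ is strictly decreasing in $s$, so $P(\Sigma,\sigma,\mathcal G^s) < 0$ and there is $\gamma>0$ with
\[
\sum_{\bi\in\Sigma_n}\sup_{x\in[\bi]}\phi^s(D_{\Pi\sigma^n x}f_\bi)\;\leq\;e^{-n\gamma}
\]
for all large $n$. For each $r>0$ introduce a stopping time on $\Sigma$ of the form $\tau_r(x) = \min\{n\ge 1 : \alpha_d(D_{\Pi\sigma^n x}f_{x|n}) < r\}$, yielding a finite antichain $\Gamma_r\subset \Sigma_*$ with $K = \bigcup_{\bi\in\Gamma_r} f_\bi(K)$. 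The linear heuristic says that $f_\bi(Z)$ is contained in an "ellipsoid" with axes roughly $\alpha_1(Df_\bi)\mathrm{diam}(Z),\ldots,\alpha_d(Df_\bi)\mathrm{diam}(Z)$, which is covered by $\asymp \phi^s(Df_\bi)/r^s$ balls of radius $r$. Summing over $\Gamma_r$ and using the pressure bound gives $\mathcal N(K,r)\lesssim r^{-s}$. To make the ellipsoid heuristic rigorous in the $C^1$ setting I would iteratively subdivide each cylinder into sub-cylinders of diameter small enough that $f_\bi$ is within a fixed multiplicative constant of its linearization on each piece; because the IFS is uniformly contracting, the diameters of cylinders shrink geometrically, and so uniform continuity of $D_z f_i$ supplies the required approximation without needing a Hölder modulus. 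Letting $s\downarrow\dim_S\mathcal F$ yields $\overline{\dim}_B K\leq \dim_S\mathcal F$.

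For part (ii), fix an ergodic $\sigma$-invariant $\mu$ and set $s = \dim_{L,\mathcal F}\mu$, so that $h_\mu(\sigma) + \mathcal G^s_*(\mu) = 0$. By Shannon--McMillan--Breiman and Kingman's sub-additive ergodic theorem, for $\mu$-a.e.\ $x$,
\[
-\tfrac{1}{n}\log\mu([x|n])\to h_\mu(\sigma),\qquad \tfrac{1}{n}g_n^s(x)\to\mathcal G^s_*(\mu),
\]
so up to sub-exponential factors $\mu([x|n])\approx\phi^s(D_{\Pi\sigma^n x}f_{x|n})$. For a small $r$, let $\bi(x,r) := x|\tau_r(x)$ with $\tau_r$ as above. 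The image of the corresponding cylinder under $\Pi$ has "ellipsoidal" shape with smallest axis $\approx r$, and I would show by a Besicovitch-type argument (controlling how many such ellipsoids of comparable shape and roughly equal smallest axis can intersect a single ball of radius $r$) that
\[
\Pi_*\mu(B(\Pi x,r))\;\lesssim\;\mu([\bi(x,r)])\;\approx\;\phi^s(D_{\Pi\sigma^{\tau_r(x)} x}f_{\bi(x,r)})\;\lesssim\; r^s,
\]
where the last estimate comes from the definition of $\phi^s$ together with the stopping condition. This gives $\overline{d}_{\Pi_*\mu}(\Pi x)\le s$ for $\mu$-a.e.\ $x$, whence $\overline{\dim}_P\Pi_*\mu\le s = \dim_{L,\mathcal F}\mu$.

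The main obstacle in both parts is replacing the bounded-distortion step used in the $C^{1+\alpha}$ setting. I expect this to be handled not by a global distortion estimate but by a scale-by-scale linearization: on each cylinder, one uses uniform continuity of the Jacobians and the geometric decay of cylinder diameters to approximate $f_\bi$ by its differential with multiplicative error bounded by a constant independent of $\bi$, which suffices for the counting step. A secondary technical point is choosing the correct stopping condition (which singular value, or which combination, to threshold at $r$) so that the resulting ellipsoidal covers interact cleanly with the singular value function $\phi^s$ for all relevant values of $s\in[0,d]$, including non-integer ones where the interpolation $\alpha_{k+1}^{s-k}$ must be tracked carefully through the counting argument.
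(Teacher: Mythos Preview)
This theorem is not proved in the present paper at all; it is quoted as the main result of Part~I of the series \cite{FengSimon2020}, and the current paper (Part~II) simply cites it. There is therefore no proof here against which to compare your proposal.

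On the substance: your outline for part~(i) is broadly in the right spirit (negative pressure for $s>\dim_S\mathcal F$, a stopping-time cover, counting balls needed to cover each ellipsoidal image), and you correctly flag the lack of bounded distortion in the bare $C^1$ setting as the main technical obstacle.

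Part~(ii), however, contains a genuine error of direction. To bound the \emph{upper} local dimension $\overline d_{\Pi_*\mu}(\Pi x)$ from above by $s$ you must produce, for every $\epsilon>0$ and all small $r$, a \emph{lower} bound $\Pi_*\mu(B(\Pi x,r))\ge r^{s+\epsilon}$. Your displayed chain $\Pi_*\mu(B(\Pi x,r))\lesssim\mu([\bi(x,r)])\lesssim r^s$ is an \emph{upper} bound on the ball mass; if correct it would yield $\underline d_{\Pi_*\mu}(\Pi x)\ge s$, the opposite inequality. Your stopping time, based on the \emph{smallest} singular value $\alpha_d$ crossing $r$, compounds the problem: the resulting ellipsoid has its shortest axis $\approx r$ but its longer axes possibly $\gg r$, so $\Pi([\bi(x,r)])$ is not contained in $B(\Pi x,r)$ and you cannot get the needed lower bound by cylinder containment either. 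The Besicovitch-type overlap control you allude to would, if anything, bound the ball mass \emph{above} by a bounded multiple of a single cylinder mass, again the wrong direction. A correct argument for~(ii) must instead manufacture enough mass inside $B(\Pi x,r)$, which requires a different mechanism than the one you sketch.
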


\section{The proof of Theorem \ref{thm-f1.1}}
\label{S-f1.1}

In this section, we prove Theorem \ref{thm-f1.1}.
A key part of the proof is the following proposition.

\begin{pro}
\label{pro-f1.1}
Assume that $\mathcal F^t$, $t\in {\Omega}$, satisfies the GTC with respect to a locally finite Borel measure $\eta$ on ${\Omega}$.
Let $\mu$ be a $\sigma$-invariant ergodic measure on $\Sigma$. Let $t_0\in {\Omega}$ and $0<\delta<\delta_0$, where $\delta_0$ is given as in Definition \ref{de-1.2}, Then the following properties hold.
\begin{itemize}
\item[(i)] For $\eta$-a.e.~$t\in B(t_0,\delta)$,
\begin{equation}
\label{e-f5}
\underline{\dim}_H\Pi^t_*\mu\geq \min\{d, d_\mu(t_0)\}-\frac{\psi(\delta)}{\log (1/\theta)},
\end{equation}
where  $\psi(\cdot)$ is given as in Definition \ref{de-1.2}, and $\theta$ is given as in \eqref{e-f1}.
\item[(ii)] If  $d_\mu(t_0)>d+\frac{\psi(\delta)}{\log (1/\theta)}$,   then $\Pi^t_*\mu\ll \mathcal L_d$ for $\eta$-a.e.~$t\in B(t_0,\delta)$.
\end{itemize}
\end{pro}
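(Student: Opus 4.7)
The plan is a Borel--Cantelli estimate at dyadic scales, combined with the GTC and the sub-additive ergodic theorem. I choose this framework because it makes only pointwise $\mu$-a.e.\ decay of the relevant summands necessary, which is exactly what Shannon--McMillan--Breiman and Kingman's sub-additive ergodic theorem supply for an arbitrary ergodic $\mu$; it sidesteps the need for $L^1(\mu)$ expectation control of those summands, which in general would require exponential large-deviation bounds we do not have.

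For part (i), fix a rational $s<\min\{d,d_\mu(t_0)\}-\psi(\delta)/\log(1/\theta)$ and a small $\epsilon>0$, and for each $\bi\in\Sigma$ and $k\in\mathbb N$ set
\[A_k^\bi:=\bigl\{t\in B(t_0,\delta):\Pi^t_*\mu(B(\Pi^t\bi,2^{-k}))\geq 2^{-k(s-\epsilon)}\bigr\}.\]
If $\sum_k\eta(A_k^\bi)<\infty$ for $\mu$-a.e.\ $\bi$, then Borel--Cantelli gives $\underline d_{\Pi^t_*\mu}(\Pi^t\bi)\geq s-\epsilon$ for $\eta$-a.e.\ $t$; a Fubini swap followed by countable sequences of $s$ and $\epsilon$ then yields (i). To estimate $\eta(A_k^\bi)$, Markov, Fubini and the GTC give
\[\eta(A_k^\bi)\leq 2^{k(s-\epsilon)}C\int_{\Sigma} e^{|\bi\wedge\bj|\psi(\delta)}Z^{t_0}_{\bi\wedge\bj}(2^{-k})\,d\mu(\bj).\]
Since $r\mapsto Z^{t_0}_{\bi'}(r)$ is nondecreasing, the geometric sum $\sum_k 2^{k(s-\epsilon)}Z^{t_0}_{\bi'}(2^{-k})$ is controlled by $\int_0^\infty u^{-(s-\epsilon)-1}Z^{t_0}_{\bi'}(u)\,du$; splitting the $u$-axis at the singular values of $D_xf^{t_0}_{\bi'}$ then shows that for every $x\in K^{t_0}$ and every $s-\epsilon<d$, $\int_0^\infty u^{-(s-\epsilon)-1}\min_k u^k/\phi^k(D_xf^{t_0}_{\bi'})\,du\leq C'/\phi^{s-\epsilon}(D_xf^{t_0}_{\bi'})$. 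Choosing $x=\Pi^{t_0}\sigma^n\bi$ with $n=|\bi\wedge\bj|$ and bounding $\mu\{\bj:|\bi\wedge\bj|=n\}\leq\mu([\bi|n])$ yields
\[\sum_k\eta(A_k^\bi)\leq C''\sum_{n=0}^\infty\frac{\mu([\bi|n])\,e^{n\psi(\delta)}}{\phi^{s-\epsilon}(D_{\Pi^{t_0}\sigma^n\bi}f^{t_0}_{\bi|n})}.\]

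To absorb the exponential factor, set $\tilde s:=(s-\epsilon)+\psi(\delta)/\log(1/\theta)$. By Lemma~\ref{lem-inequality}(ii) and $\|Df^{t_0}_{\bi|n}\|\leq\theta^n$ we have $\phi^{\tilde s}(Df^{t_0}_{\bi|n})\leq\phi^{s-\epsilon}(Df^{t_0}_{\bi|n})\,\theta^{n(\tilde s-(s-\epsilon))}=\phi^{s-\epsilon}(Df^{t_0}_{\bi|n})\,e^{-n\psi(\delta)}$, so $e^{n\psi(\delta)}/\phi^{s-\epsilon}\leq 1/\phi^{\tilde s}$. The choice of $s$ forces $\tilde s<\min\{d,d_\mu(t_0)\}$; combined with $h_\mu(\sigma)+\mathcal G^{d_\mu(t_0)}_*(\mu)=0$ and the strict decrease of $s\mapsto\mathcal G^s_*(\mu)$ this gives $h_\mu(\sigma)+\mathcal G^{\tilde s}_*(\mu)>0$. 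Shannon--McMillan--Breiman and Kingman's sub-additive ergodic theorem applied to the cocycle $g^{\tilde s}_n$ then give, for $\mu$-a.e.\ $\bi$,
\[\frac{1}{n}\log\!\left(\frac{\mu([\bi|n])}{\phi^{\tilde s}(D_{\Pi^{t_0}\sigma^n\bi}f^{t_0}_{\bi|n})}\right)\longrightarrow-\bigl(h_\mu(\sigma)+\mathcal G^{\tilde s}_*(\mu)\bigr)<0\qquad(n\to\infty),\]
so the summands decay exponentially and $\sum_k\eta(A_k^\bi)<\infty$ for $\mu$-a.e.\ $\bi$, completing (i).

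Part (ii) follows by the same machinery at the critical exponent $s=d$, controlling the upper $d$-density rather than the local dimension. The hypothesis $d_\mu(t_0)>d+\psi(\delta)/\log(1/\theta)$ becomes $\tilde d:=d+\psi(\delta)/\log(1/\theta)<d_\mu(t_0)$. Using $Z^{t_0}_{\bi'}(r)\leq r^d/\phi^d(D_xf^{t_0}_{\bi'})$ (valid for all $r>0$ since $Z$ is dominated by its $k=d$ term) and the same Lemma~\ref{lem-inequality}(ii) trade, the analogous chain of inequalities shows that for $\mu$-a.e.\ $\bi$,
\[\liminf_{r\to 0}\int_{B(t_0,\delta)}\frac{\mu\{\bj:|\Pi^t\bi-\Pi^t\bj|<r\}}{r^d}\,d\eta(t)<\infty.\]
Fatou then gives, for $\eta$-a.e.\ $t$, that $\liminf_{r\to 0}\Pi^t_*\mu(B(x,r))/r^d<\infty$ for $\Pi^t_*\mu$-a.e.\ $x$, and the standard differentiation theorem of measures forces $\Pi^t_*\mu\ll\mathcal L_d$. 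The main obstacle throughout is the \emph{organisation} ensuring that the ergodic input is required only pointwise in $\bi$, with the exponential factor $e^{n\psi(\delta)}$ paid for by shrinking the exponent via Lemma~\ref{lem-inequality}(ii); once the Borel--Cantelli / Fatou framework is in place, the $Z$-integral bound and the subsequent chain of inequalities are routine.
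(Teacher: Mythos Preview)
Your proof is correct and uses the same essential ingredients as the paper: the GTC bound, the integral estimate $\int_0^\infty u^{-s-1}\min_k u^k/\phi^k(M)\,du\lesssim 1/\phi^s(M)$ for non-integral $s\in(0,d)$ (which is precisely the content of the paper's Lemma~3.2), the exponent shift $e^{n\psi(\delta)}/\phi^{s-\epsilon}\leq 1/\phi^{\tilde s}$ via Lemma~\ref{lem-inequality}(ii), and the Shannon--McMillan--Breiman plus Kingman input giving exponential decay of $\mu([\bi|n])/\phi^{\tilde s}(\cdot)$.

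Where you genuinely diverge is in the \emph{packaging}. The paper proceeds by decomposing $\mu=\sum_j\mu_j$ into pieces on which $\mu_j([\bi|n])\leq c_j\varphi^s(\bi|n)e^{-n\psi(\delta)}\theta^{n\epsilon}$ holds uniformly, and then runs the classical $s$-energy method on each $\mu_j$ (respectively, the Peres--Solomyak density integral for part~(ii)); the Fubini swap happens at the level of the energy integral. You instead keep $\mu$ intact, use a dyadic Borel--Cantelli to pass from the $\mu$-a.e.\ exponential decay of the summands directly to an $\eta$-a.e.\ local-dimension bound, and only then swap quantifiers by Fubini. For part~(ii) you replace the Peres--Solomyak $L^1$ argument by Fatou pointwise in $\bi$ followed by the density differentiation theorem. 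Your route trades the decomposition step for a slightly more delicate order of operations; the paper's route yields the marginally stronger conclusion of finite $s$-energy for each piece $\mu_j$. Both are equally valid and comparable in length. One small caveat: your integral bound $\int u^{-(s-\epsilon)-1}Z\,du\leq C'/\phi^{s-\epsilon}$ needs $s-\epsilon$ non-integral (the constant $C'$ blows up at integers), so when picking your countable sequence of $(s,\epsilon)$ you should arrange this; the paper handles the analogous issue by simply requiring $s$ non-integral in Lemma~3.2.
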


The proof of the above proposition is adapted from  an argument used  in \cite[Propositions 4.3-4.4]{JordanPollicottSimon07}. For the reader's convenience, we include a full proof.  We begin with the following.

\begin{lem}
Assume that $(\mathcal F^t)_{t\in {\Omega}}$ satisfies the GTC with respect to a locally finite Borel measure $\eta$ on ${\Omega}$. Let $s$ be non-integral with $0<s<d$. Let $t_0\in {\Omega}$ and $0<\delta<\delta_0$, where  $\delta_0$ is given as in Definition \ref{de-1.2}. Then there exists a number
$c>0$, dependent on $s$ and $\delta$, such that for all distinct $\bi,\bj\in \Sigma$,
\begin{equation}
\label{e-f3}
\int_{B(t_0,\delta)} |\Pi^t({\bf i})-\Pi^t(\bj) |^{-s} \;d\eta(t)\leq c e^{|\bi\wedge \bj|\psi(\delta)} \left( \max_{x\in \Sigma} \phi^s (D_{\Pi^{t_0} x} f^{t_0}_{\bi\wedge \bj})\right)^{-1},
\end{equation}
where  $\psi(\cdot)$ is given as in Definition \ref{de-1.2}.
\end{lem}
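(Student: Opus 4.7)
The plan is to reduce the integral to the GTC by the layer-cake formula, then analyze the resulting one-dimensional integral by splitting according to the singular values of the matrix that maximizes $\phi^s$.

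More precisely, I would first write, for $\bi\neq \bj$ and $n:=|\bi\wedge\bj|$,
\begin{equation*}
\int_{B(t_0,\delta)} |\Pi^t(\bi)-\Pi^t(\bj)|^{-s}\,d\eta(t)
= s\int_0^\infty r^{-s-1}\,\eta\bigl\{t\in B(t_0,\delta):\,|\Pi^t(\bi)-\Pi^t(\bj)|<r\bigr\}\,dr,
\end{equation*}
and then apply the GTC to bound the $\eta$-measure inside the integral by $C e^{n\psi(\delta)}Z^{t_0}_{\bi\wedge\bj}(r)$. Choosing a point $x^*\in\Sigma$ which realizes the maximum $\Phi:=\max_{x\in\Sigma}\phi^s(D_{\Pi^{t_0}x}f^{t_0}_{\bi\wedge\bj})$ and writing $\alpha_1\geq\cdots\geq\alpha_d>0$ for the singular values of $T:=D_{\Pi^{t_0}x^*}f^{t_0}_{\bi\wedge\bj}$, the definition \eqref{e-f1.0} gives
\begin{equation*}
Z^{t_0}_{\bi\wedge\bj}(r)\leq \rho(r):=\min_{0\leq k\leq d}\frac{r^k}{\alpha_1\cdots\alpha_k}.
\end{equation*}
So it suffices to bound $s\,C e^{n\psi(\delta)}\int_0^\infty r^{-s-1}\rho(r)\,dr$ by a constant multiple of $e^{n\psi(\delta)}/\Phi$.

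The key observation is that the piecewise-monomial function $\rho(r)$ equals $1$ for $r\geq\alpha_1$, $r^j/(\alpha_1\cdots\alpha_j)$ for $\alpha_{j+1}\leq r<\alpha_j$, and $r^d/(\alpha_1\cdots\alpha_d)$ for $r<\alpha_d$. I would therefore split the integration range accordingly, producing a sum of $d+1$ pieces of the form $\int_{\alpha_{j+1}}^{\alpha_j}r^{j-s-1}\,dr/(\alpha_1\cdots\alpha_j)$ (with the obvious convention at the endpoints $r=0$ and $r=\infty$). Because $s\in(0,d)$ is \emph{non-integral}, each exponent $j-s-1\neq -1$, so each piece evaluates to a clean power of the appropriate $\alpha_j$, with prefactor $1/|j-s|$.

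The heart of the argument is then purely algebraic: writing $k=[s]$, so that $\Phi=\alpha_1\cdots\alpha_k\,\alpha_{k+1}^{s-k}$, one checks that each boundary term produced above is bounded by $1/\Phi$. For $j\leq k$ the relevant ratio is
\[
\frac{\alpha_{j+1}^{j-s}}{\alpha_1\cdots\alpha_j}\cdot\Phi
=\alpha_{j+1}^{1+j-s}\alpha_{j+2}\cdots\alpha_k\,\alpha_{k+1}^{s-k},
\]
and, using $\alpha_{j+1}\geq \alpha_i$ for $i>j+1$ together with $1+j-s\leq 0$, each such factor can be bounded above by $\alpha_{j+1}^{0}=1$. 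The case $j\geq k+1$ is handled symmetrically by estimating $\alpha_j^{j-s}/(\alpha_1\cdots\alpha_j)\cdot\Phi$ from above using $\alpha_j\leq\alpha_i$ for $i\leq j$. The cases $j=0$ (tail at infinity) and $j=d$ (tail at zero) are handled in the same way and use crucially that $s<d$ so that the tail at $r=0$ is integrable.

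The main obstacle I anticipate is not the layer-cake step, which is standard, but the algebraic book-keeping that each of the $d+1$ boundary terms is in fact dominated by $1/\Phi$ with a constant depending only on $s$; this is where the hypothesis that $s$ is non-integral is used, and where the ordering $\alpha_1\geq\cdots\geq\alpha_d$ together with $k=[s]$ enters decisively. Putting the $d+1$ pieces back together yields the desired constant $c=c(s,\delta)$ and completes the proof of \eqref{e-f3}.
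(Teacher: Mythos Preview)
Your approach is correct and follows the same overall strategy as the paper---layer-cake formula followed by the GTC bound---but you take a more laborious route in the second half. After reaching
\[
Z^{t_0}_{\bi\wedge\bj}(r)\leq \rho(r)=\min_{0\leq j\leq d}\frac{r^j}{\alpha_1\cdots\alpha_j},
\]
you split the $r$-integral into $d+1$ pieces at the singular values $\alpha_1,\ldots,\alpha_d$ and then verify, one by one, that each resulting boundary term is dominated by $1/\Phi$. The paper instead observes that it suffices to keep only the \emph{two} terms $j=k$ and $j=k+1$ in the minimum (where $k=[s]$): writing $a=\phi^k(T)=\alpha_1\cdots\alpha_k$ and $b=\phi^{k+1}(T)=\alpha_1\cdots\alpha_{k+1}$, one has $Z^{t_0}_{\bi\wedge\bj}(r)\leq\min\{r^k/a,\,r^{k+1}/b\}$, the integral splits at the single point $r=b/a$, and a two-line computation gives the explicit constant $c=sC\bigl(\tfrac{1}{k+1-s}+\tfrac{1}{s-k}\bigr)$, using directly that $\Phi=a^{k+1-s}b^{s-k}$. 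This bypasses the $d+1$-term bookkeeping entirely and makes transparent why only the non-integrality of $s$ (and not any finer information about the $\alpha_i$) matters.

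One small slip in your sketch: the claim ``$1+j-s\leq 0$'' fails when $j=k$ (since $s<k+1$). Fortunately your bound does not actually need it---once you replace each $\alpha_i$ with $i>j+1$ by $\alpha_{j+1}$, the exponents sum to zero regardless of the sign of $1+j-s$, so the inequality $\alpha_{j+1}^{1+j-s}\alpha_{j+2}\cdots\alpha_k\,\alpha_{k+1}^{s-k}\leq 1$ holds anyway.
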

\begin{proof}
Take $y\in \Sigma$ so that
$
\phi^s (D_{\Pi^{t_0} y} f^{t_0}_{\bi\wedge \bj})= \max_{x\in \Sigma} \phi^s (D_{\Pi^{t_0} x} f^t_{\bi\wedge \bj}).
$
Let $k$ be the unique integer such that $s\in (k, k+1)$. Clearly $k\in \{0,1,\ldots, d-1\}$. For convenience, write
$$
a:=\phi^k (D_{\Pi^{t_0} y} f^{t_0}_{\bi\wedge \bj}),\quad b:=\phi^{k+1}(D_{\Pi^{t_0} y} f^{t_0}_{\bi\wedge \bj}),
$$
where $\phi^s(\cdot)$ stands for the singular value function (see \eqref{e-singular} for the definition).
A direct check shows that
\begin{equation}
\label{e-f4}
\phi^s (D_{\Pi^{t_0} y} f^t_{\bi\wedge \bj})=a^{k+1-s}b^{s-k}.
\end{equation}
Observe that
\begin{align*}
\int_{B(t_0,\delta)} & |\Pi^t({\bf i})-\Pi^t(\bj) |^{-s} \;d\eta(t)\\
&=s\int_0^\infty r^{-s-1} \eta \{t\in B(t_0,\delta):\; |\Pi^t({\bf i})-\Pi^t({\bf j})  |< r\}\; dr\\
&\leq sCe^{|\bi\wedge \bj|\psi(\delta)}   \int_0^\infty r^{-s-1} Z^{t_0}_{\bi\wedge \bj}(r) \; dr \qquad (\mbox{by \eqref{e-f2}})\\
&\leq sCe^{|\bi\wedge \bj|\psi(\delta)}   \int_0^\infty r^{-s-1} \min\left\{ \frac{r^k}{a}, \frac{r^{k+1}}{b} \right\} \; dr \qquad (\mbox{by \eqref{e-f1.0}})\\
&\leq sCe^{|\bi\wedge \bj|\psi(\delta)}   \int_0^{b/a}  \frac{r^{k-s}}{b} \;dr+ \int_{b/a}^\infty  \frac{r^{k-s-1}}{a}\; dr\\
&=sCe^{|\bi\wedge \bj|\psi(\delta)} \left(\frac{1}{k+1-s}+\frac{1}{s-k}\right)a^{s-k-1}b^{k-s}\\
&=sC\left(\frac{1}{k+1-s}+\frac{1}{s-k}\right)e^{|\bi\wedge \bj|\psi(\delta)} (\phi^s (D_{\Pi^{t_0} y}f^{t_0}_{\bi\wedge \bj}))^{-1}\quad (\mbox{by \eqref{e-f4})}.
\end{align*}
This proves \eqref{e-f3} by setting $c=sC\left(\frac{1}{k+1-s}+\frac{1}{s-k}\right)$.
\end{proof}

\begin{proof}[Proof of Proposition \ref{pro-f1.1}] Fix $t_0\in {\Omega}$ and $\delta\in (0,\delta_0)$. We first prove part (i).
  Let $\epsilon>0$ and let $s$ be non-integral so that
\begin{equation}
\label{e-ff0}
0<s<\min\{d, d_\mu(t)\}-\frac{\psi(\delta)}{\log (1/\theta)}-2\epsilon.
\end{equation}
 To show that \eqref{e-f5} holds for $\eta$-a.e.~$t\in B(t_0,\delta)$, it suffices to show that
\begin{equation}
\label{e-ff0'}
\underline{\dim}_H\Pi^t_*\mu\geq s \quad \mbox{for $\eta$-a.e.~}t\in B(t_0,\delta).
\end{equation}

For this purpose,  we write
\begin{equation}
\label{e-ff1}
\varphi^s(I)=\max_{x\in \Sigma} \phi^s(D_{\Pi^{t_0}_x} f_I^{t_0}),\quad  I\in \Sigma_*.
\end{equation}
We first prove that
\begin{equation}
\label{e-ff2}
\lim_{n\to \infty}\frac{\mu([\bi|n])}{\varphi^s(\bi|n) \exp(-n \psi(\delta))\theta^{n\epsilon}}=0\quad \mbox{ for $\mu$-a.e.~}\bi\in \Sigma.
\end{equation}
To see this, according to the definition of $d_\mu(t_0)$ (cf. \eqref{e-de1.6} and Definition \ref{de-2}),
\begin{equation}
\label{e-ff3}
h_\mu(\sigma)+\lim_{n\to \infty}\frac{1}{n}\int \log \phi^{d_\mu(t_0)} (D_{\Pi^{t_0}\sigma^n\bi} f^{t_0}_{\bi|n})\; d\mu(\bi)=0.
\end{equation}
It follows from \eqref{e-ff3}, the Shannon-McMillan-Breiman theorem and Kingman's sub-additive ergodic theorem (see \cite[p.~93 and p.~231]{Walters82}) that
\begin{equation}
\label{e-ff4}
\lim_{n\to \infty}\frac{1}{n}\log \frac{\mu([\bi|n])}{\phi^{d_\mu(t_0)} (D_{\Pi^{t_0}\sigma^n\bi} f^{t_0}_{\bi|n})}=0 \quad \mbox{ for $\mu$-a.e.~}\bi\in \Sigma.
\end{equation}
Observe that for each $\bi\in \Sigma$ and $n\in \N$,
\begin{align*}
\phi^{d_\mu(t_0)} (D_{\Pi^{t_0}\sigma^n\bi} f^{t_0}_{\bi|n})
 &\leq \phi^{s} (D_{\Pi^{t_0}\sigma^n\bi} f^{t_0}_{\bi|n}) \|D_{\Pi^{t_0}\sigma^n\bi} f^{t_0}_{\bi|n}\|^{d_\mu(t_0)-s}
 \qquad \mbox{(by Lemma \ref{lem-inequality}(ii))}\\
 &\leq \varphi^{s} (\bi|n) \theta^{n(d_\mu(t_0)-s)}\qquad\qquad \mbox{(by \eqref{e-ff1} and \eqref{e-f1})}\\
 &\leq  \varphi^s(\bi|n) \exp(-n \psi(\delta))\theta^{2n\epsilon} \qquad \mbox{(by \eqref{e-ff0})}.
 \end{align*}
 Combining the above inequality with \eqref{e-ff4} yields \eqref{e-ff2}.

 By \eqref{e-ff2},  we
may find a countable disjoint collection  of Borel subsets $E_j$ of $\Sigma$ with $\mu(\Sigma\backslash \bigcup_{j=1}^\infty E_j)=0$ and numbers $c_j>0$ such that
\begin{equation}
\label{e-ff5}
\mu_j([\bi|n])\leq c_j  \varphi^s(\bi|n) \exp(-n \psi(\delta))\theta^{n\epsilon} \quad \mbox{ for all }\bi\in \Sigma,\; n\in \N,
\end{equation}
where $\mu_j$ stands for the restriction of $\mu$ to $E_j$ defined by $\mu_j(A)=\mu(E_j\cap A)$. Clearly,
\begin{equation}
\label{e-ff6}
\mu=\sum_{j=1}^\infty \mu_j.
\end{equation}
Hence,  to prove \eqref{e-ff0'} it suffices to show that for each $j$,
\begin{equation}
\label{e-ff0''}
\underline{\dim}_H\Pi^t_*\mu_j\geq s \quad \mbox{for $\eta$-a.e.~}t\in B(t_0,\delta).
\end{equation}
By the potential theoretic characterization of the Hausdorff dimension (see e.g. \cite[Theorem
4.13]{Falconer2003}), it is enough to show that for each $j$ and $\eta$-a.e.~$t\in B(t_0,\delta)$, $\Pi^t_*\mu_j$ has finite $s$-energy:
$$
I_s(\Pi^t_*\mu_j):=\iint  \frac{ d \Pi^t_*\mu_j (x)d \Pi^t_*\mu_j (y)}{|x-y|^{s}}<\infty.
$$

 Integrating over $B(t_0,\delta)$ with respect to $\eta$ and using Fubini's theorem,
 \begin{align*}
 \int_{B(t_0,\delta)}I_s(\Pi^t_*\mu_j)\; d\eta(t)&=\int_{B(t_0,\delta)} \iint  \frac{ d \Pi^t_*\mu_j (x)d \Pi^t_*\mu_j (y)}{|x-y|^{s}} d\eta(t)\\
 &=\int_{B(t_0,\delta)} \iint  \frac{ d\mu_j (\bi)d \mu_j (\bj)}{|\Pi^t(\bi)-\Pi^t(\bj)|^{s}} d\eta(t)\\
 &= \iint \int_{B(t_0,\delta)} \frac{d\eta(t)}{|\Pi^t(\bi)-\Pi^t(\bj)|^{s}} \ d\mu_j (\bi)d \mu_j (\bj)\\
 &\leq \iint c e^{|\bi\wedge \bj|\psi(\delta)}(\varphi^s(\bi\wedge \bj))^{-1}  d\mu_j (\bi)d \mu_j (\bj)\quad \mbox{(by \eqref{e-f3}, \eqref{e-ff1})}\\
 &\leq  c\int \sum_{n=0}^\infty e^{n\psi(\delta)} (\varphi^s(\bj|n))^{-1} \mu_j([\bj|n])d \mu_j (\bj)\\
 &\leq  cc_j\int \sum_{n=0}^\infty \theta^{n\epsilon} d \mu_j (\bj)\quad \mbox{(by \eqref{e-ff5})}\\
 &<\infty.
  \end{align*}
It follows that $I_s(\Pi^t_*\mu_j)<\infty$ for $\eta$-a.e.~$t\in B(t_0,\delta)$. This completes the proof of part (i).

Next we prove part (ii).  Take a small $\epsilon>0$ so that
\begin{equation}
\label{e-to1}
d_\mu(t_0)>d+\frac{\psi(\delta)}{\log (1/\theta)}+2\epsilon.
\end{equation}
Then  for every $\bi\in \Sigma$ and $n\in \N$,
\begin{align*}
\phi^{d_\mu(t_0)} (D_{\Pi^{t_0}\sigma^n\bi} f^{t_0}_{\bi|n})
 &\leq \phi^{d} (D_{\Pi^{t_0}\sigma^n\bi} f^{t_0}_{\bi|n}) \|D_{\Pi^{t_0}\sigma^n\bi} f^{t_0}_{\bi|n}\|^{d_\mu(t_0)-d}
 \qquad \mbox{(by Lemma \ref{lem-inequality}(ii))}\\
 &\leq \varphi^{d} (\bi|n) \theta^{n(d_\mu(t_0)-d)}\qquad\qquad \mbox{(by \eqref{e-ff1} and \eqref{e-f1})}\\
 &\leq  \varphi^d(\bi|n) \exp(-n \psi(\delta))\theta^{2n\epsilon} \qquad \mbox{(by \eqref{e-to1})}.
 \end{align*}
 Combining the above inequality with \eqref{e-ff4} yields that
$$
\lim_{n\to \infty}\frac{\mu([\bi|n])}{\varphi^d(\bi|n) \exp(-n \psi(\delta))\theta^{n\epsilon}}=0\quad \mbox{ for $\mu$-a.e.~}\bi\in \Sigma.
$$
Hence there exist  finite positive measures $\nu_j$ and numbers $a_j$ ($j\geq 1)$ such that  $\mu=\sum_{j=1}^\infty \nu_j$ and
\begin{equation}
\label{e-ff10}
\nu_j([\bi|n])\leq a_j  \varphi^d(\bi|n) \exp(-n \psi(\delta))\theta^{n\epsilon} \quad \mbox{ for all }\bi\in \Sigma,\; n\in \N.
\end{equation}

Since $\mu=\sum_{j=1}^\infty \nu_j$, to show that $\Pi^t_*\mu\ll \mathcal L_d$ for $\eta$-a.e.~$t\in B(t_0,\delta)$,  it suffices to show that for each $j$,
\begin{equation}
\label{e-ff11}
\Pi^t_*\nu_j\ll \mathcal L_d \quad \mbox{ for $\eta$-a.e.~}t\in B(t_0,\delta).
\end{equation}
To this end, fix $j$. We will follow a standard approach (introduced by Peres and Solomyak in \cite{PeresSolomyak1996}). In particular, it suffices to show
that
$$
I:=\int_{B(t_0,\delta)} \int \liminf_{r\to 0} \frac{\Pi^t_*\nu_j(B_{\R^d}(x,r)) }{r^d}\; d\Pi^t_*\nu_j(x) d\eta(t)
<\infty,
$$
here $B_{\R^d}(x,r)$ stands for the closed ball in $\R^d$ centered at $x$ of radius $r$.
Observe  that by \eqref{e-f1.0} and \eqref{e-ff1},
\begin{equation}
\label{e-ff12}
Z^{t_0}_{\pmb{\omega}}(r)\leq
\inf_{x\in \Sigma}  \frac{r^d}{ \phi^d (D_{\Pi^{t_0} x}f^t_{\pmb{\omega}} ) }=\frac{r^d}{\varphi^d(\pmb{\omega})}\quad \mbox { for }\pmb{\omega}\in \Sigma_*,\; r>0.
\end{equation}
Applying Fatou’s Lemma and Fubini’s Theorem,
\begin{align*}
I&\leq \liminf_{r\to 0}\frac{1}{r^d} \int_{B(t_0,\delta)} \int \Pi^t_*\nu_j(B_{\R^d}(x,r)) \; d\Pi^t_*\nu_j(x) d\eta(t)\\
&=\liminf_{r\to 0}\frac{1}{r^d} \int_{B(t_0,\delta)} \iint  {\bf 1}_{\{ (x,y): \; |x-y|\leq r\}}   \; d\Pi^t_*\nu_j(x)d\Pi^t_*\nu_j(y) d\eta(t)\\
&=\liminf_{r\to 0}\frac{1}{r^d} \int_{B(t_0,\delta)} \iint  {\bf 1}_{\{(\bi, \bj):\;  |\Pi^t(\bi)-\Pi^t(\bj)|\leq r\}} \; d\nu_j(\bi)d\nu_j(\bj) d\eta(t) \\
&=\liminf_{r\to 0}\frac{1}{r^d} \iint   \int {\bf 1}_{\{ t\in B(t_0,\delta):\; |\Pi^t(\bi)-\Pi^t(\bj)|\leq r\}} \; d\eta(t)d\nu_j(\bi)d\nu_j(\bj)  \\
&=\liminf_{r\to 0}\frac{1}{r^d} \iint   \eta\{t\in B(t_0,\delta):\; |\Pi^t(\bi)-\Pi^t(\bj)|\leq r\} \; d\nu_j(\bi)d\nu_j(\bj).
\end{align*}
By \eqref{e-f2} and  \eqref{e-ff12}, we obtain that
\begin{align*}
I &\leq \liminf_{r\to 0}\frac{1}{r^d} \iint   C e^{|{\bf i}\wedge {\bf j}| \psi(\delta)} Z_{{\bf i}\wedge {\bf j}}^{t_0}(r) \; d\nu_j(\bi)d\nu_j(\bj)\\
&\leq \iint   \frac{C e^{|{\bf i}\wedge {\bf j}| \psi(\delta)}}{\varphi^d(\bi\wedge \bj)} \; d\nu_j(\bi)d\nu_j(\bj)\\
&\leq \int \sum_{n=0}^\infty  \frac{C e^{n \psi(\delta)}}{\varphi^d( \bj|n)}\nu_j ([\bj |n])\; d\nu_j(\bj)\\
&\leq a_jC\int \sum_{n=0}^\infty  \theta^{n\epsilon}\; d\nu_j(\bj) \qquad \mbox{ (by \eqref{e-ff10})}\\
&<\infty,
\end{align*}
which completes the proof of part (ii).
\end{proof}

Now we are ready to prove Theorem \ref{thm-f1.1}.

\begin{proof}[Proof of Theorem \ref{thm-f1.1}(i)]

Let $\mu$ be a $\sigma$-invariant ergodic measure on $\Sigma$. We first show that for $\eta$-a.e.~$t\in {\Omega}$,
$\Pi^t_*\mu$ is exact dimensional with dimension equal to $\min\{d, d_\mu(t)\}$.  Recall that  $\overline{\dim}_P\Pi^t_*\mu\leq \min\{d, d_\mu(t)\}$ for each $t\in {\Omega}$ (see Theorem \ref{thm-FS}(ii)). Hence it is sufficient to show that for $\eta$-a.e.~$t\in {\Omega}$, $\underline{\dim}_H \Pi^t_*\mu\geq  \min\{d, d_\mu(t)\}$. Suppose on the contrary that this is false. Then there exist $k\in \N$ and  $A\subset {\Omega}$ with $\eta(A)>0$ such that
\begin{equation}
\label{e-f1'}
\underline{\dim}_H \Pi^t_*\mu< \min\{d, d_\mu(t)\}-\frac2k\qquad \mbox{ for all } t\in A.
\end{equation}

Take a  number $\delta\in (0, \delta_0)$ small enough such that
\begin{equation}
\label{e-f6'}\frac{ \psi(\delta)}{\log (1/\theta)}<\frac1k.
\end{equation}
Since ${\Omega}$ is a separable metric space, it has a  countable dense subset $$Y=\{y_n: \; n\in \N\}.$$
Notice that by \eqref{e-f1} and Lemma \ref{lem-var},
$$
0\leq d_\mu(t)\leq d(t)\leq \frac{ \log \ell}{ \log (1/\theta)}\quad  \mbox{ for each }t\in {\Omega}.
$$
  Due to this fact,  for each  $n\in \N$ we may pick $y_n^*\in B(y_n,\delta/2)$ so that
\begin{equation}
\label{e-f5'}
d_\mu(y_n^*)\geq \sup_{t\in B(y_n, \delta/2)}d_\mu(t)-\frac{1}{k}.
\end{equation}
Let $Y^*=\{y_{n}^*:\; n\in \N\}$. Clearly, $Y^*$ is countable.  We claim that
\begin{equation}
\label{e-f6}
\sup_{y^*\in B(t, \delta)\cap Y^*} d_\mu(y^*)\geq d_\mu(t)-\frac1k \quad \mbox{ for all } t\in {\Omega}.
\end{equation}
To see this, let $t\in {\Omega}$. Since $Y$ is dense in ${\Omega}$, there exists an integer $m$ such that $\rho(y_m, t)\leq \delta/2$.
Then
\begin{equation}\label{e-to2}
y_m^*\in B(y_m, \delta/2)\subset B(t,\delta).
\end{equation} Meanwhile by \eqref{e-f5'},
$$
d_\mu(y_m^*)\geq \sup_{t'\in B(y_m, \delta/2)}d_\mu(t')-\frac{1}{k}\geq d_\mu(t)-\frac1k,
$$
where in the last inequality we use the fact that $t\in B(y_m, \delta/2)$ (since $\rho(y_m, t)\leq \delta/2$).
This proves \eqref{e-f6}, since  $y^*_{m}\in B(t,\delta)\cap Y^*$ by \eqref{e-to2}.

Set for $n\in \N$,
\begin{equation}
\label{e-to3}
{\Omega}_n=\{t\in B(y_n^*,\delta):\; d_\mu(y^*_n)\geq d_\mu(t)-1/k\}.
\end{equation}
By \eqref{e-f6},  ${\Omega}=\bigcup_{n=1}^\infty {\Omega}_n$. Define
$$
E_n=\{t\in B(y_n^*,\delta):\; \underline{\dim}_H \Pi^t_*\mu<\min\{d, d_\mu(y_n^*)\}-1/k\},\quad n\in \N.
$$
By \eqref{e-f1'}, we see that $A\cap {\Omega}_n\subset E_n$ for each $n\in \N$.
However by Proposition \ref{pro-f1.1}(i) and \eqref{e-f6'}, for each $n\in \N$ and $\eta$-a.e.~$t\in B(y_n^*,\delta)$,
$$
\underline{\dim}_H \Pi^t_*\mu\geq \min\{d, d_\mu(y_n^*)\}-\frac{ \psi(\delta)}{\log (1/\theta)}> \min\{d, d_\mu(y_n^*)\}-\frac1k.
$$
It follows that
$\eta(E_n)=0$  for  each $n\in \N$.
Hence $$
\eta(A)=\eta\left(A\cap \left( \bigcup_{n=1}^\infty {\Omega}_n\right)\right)\leq \sum_{n=1}^\infty \eta(A\cap {\Omega}_n)\leq \sum_{n=1}^\infty \eta(E_n)=0,
$$
leading to a contradiction. This proves the statement that for $\eta$-a.e.~$t\in {\Omega}$,
$\Pi^t_*\mu$ is exact dimensional with dimension $\min\{d, d_\mu(t)\}$.

Next we prove that $\Pi^t_*\mu\ll \mathcal L_d$ for $\eta$-a.e.~$t\in \{t'\in {\Omega}:\; d_\mu(t')>d\}$.  Again we use contradiction. Suppose on the contrary that this result is false. Then there exist $k\in \N$ and $A'\subset {\Omega}$ with $\eta(A')>0$ such that
\begin{equation}
\label{e-f8'}
d_\mu(t)>d+\frac2k \quad \mbox{ and }\quad \Pi^t_*\mu\not\ll \mathcal L_d\qquad \mbox{ for all }t\in A'.
\end{equation}
  Set
$$
F_n=\{t\in B(y_n^*,\delta):\; \Pi^t_*\mu\not \ll \mathcal L_d\}, \quad n\in \N.
$$
Clearly, $A'\cap {\Omega}_n\subset F_n$ for each $n\in \N$. Since ${\Omega}=\bigcup_{n=1}^\infty {\Omega}_n$ and $\eta(A')>0$,  there exists $m\in \N$ so that $\eta(A'\cap {\Omega}_m)>0$.  Hence $A'\cap {\Omega}_m\neq \emptyset$. Pick $t\in A'\cap {\Omega}_m$. By \eqref{e-to3}, \eqref{e-f8'} and \eqref{e-f6'},
$$d_\mu(y_m^*)\geq d_\mu(t)-\frac{1}{k}>d+\frac{1}{k}>d+\frac{ \psi(\delta)}{\log (1/\theta)}.$$
 Hence $\eta(F_m)=0$ by Proposition \ref{pro-f1.1}(ii).   Since $A'\cap {\Omega}_m\subset F_m$, it follows that $\eta(A'\cap {\Omega}_m)=0$, leading to a contradiction.
\end{proof}

\begin{proof}[Proof of Theorem \ref{thm-f1.1}(ii)]
By Lemma \ref{lem-var}, for each $t\in {\Omega}$ we can find a $\sigma$-invariant ergodic measure $\mu_t$ on  $\Sigma$ such that
$$
d(t)=d_{\mu_t}(t).
$$
Moreover $0\leq d(t)\leq \log \ell/\log (1/\theta)$.

Next we prove that $\dim_H K^t=\dim_BK^t=\min\{d, d(t)\}$ for $\eta$-a.e.~$t\in {\Omega}$. By Theorem \ref{thm-FS},  $\overline{\dim}_B K^t\leq \min\{d, d(t)\}$ for every $t\in {\Omega}$. Hence it is sufficient to  show that $$\dim_H K^t\geq \min\{d, d(t)\} \quad \mbox{  for $\eta$-a.e.~$t\in {\Omega}$}.
$$
 Suppose on the contrary that this statement is false. Then there exist $k\in \N$ and $H\subset {\Omega}$ with $\eta(H)>0$ such that
\begin{equation}
\label{e-f9}
\dim_H K^t<\min\{d, d(t)\}-2/k \quad \mbox{ for all } t\in H.
\end{equation}

Take a number $\delta\in (0,\delta_0)$ such that \eqref{e-f6'} holds. Since $d(t)$ is uniformly bounded from above, similar to the construction of $Y^*$ in the proof of part (i) we can construct a countable dense subset $Y'=\{y'_n\}_{n=1}^\infty$ of ${\Omega}$ such that
\begin{equation}
\label{e-f6''}
\sup_{y'\in B(t, \delta)\cap Y'} d(y')\geq d(t)-\frac1k \quad \mbox{ for all } t\in {\Omega}.
\end{equation}

Write
\begin{equation}
\label{e-to4}{\Omega}_n'=\{t\in B(y'_n,\delta):\; d(y'_n)\geq d(t)-1/k\}\quad \mbox{ for }n\in \N.
\end{equation}
  By \eqref{e-f6''},
${\Omega}=\bigcup_{n=1}^\infty {\Omega}_n'$. Notice that for each $n\in \N$,
\begin{align*}
{\Omega}_n'\cap H&\subset \{t\in B(y_n', \delta):\; \dim_H K^t<\min\{d, d(y_n')\}-1/k\}\\
&\subset \{t\in B(y_n', \delta):\; \underline{\dim}_H \Pi^t_*(\mu_{y_n'})<\min\{d, d_{\mu_{y_n'}} (y_n')\}-1/k\}\\
&\subset \left\{t\in B(y_n', \delta):\; \underline{\dim}_H \Pi^t_*(\mu_{y_n'})<\min\{d, d_{\mu_{y_n'}} (y_n')\}-\frac{ \psi(\delta)}{\log (1/\theta)}\right\},
\end{align*}
where we have used the facts that $\dim_H K^t\geq \underline{\dim}_H \Pi^t_*(\mu_{y_n'})$ and
$d(y_n')=d_{\mu_{y_n'}} (y_n')$ in the second inclusion, and \eqref{e-f6'} in the last inclusion. Hence
$\eta({\Omega}_n'\cap H)=0$ for each $n$ by applying Proposition \ref{pro-f1.1}(i).  It follows that $\eta(H)\leq \sum_{n=1}^\infty \eta({\Omega}_n'\cap H)=0$, leading to a contradiction. This completes the proof of the statement that $\dim_H K^t=\min\{d, d(t)\}$ for $\eta$-a.e.~$t\in {\Omega}$.

Finally we prove that $\mathcal L_d(K^t)>0$ for $\eta$-a.e.~$t\in \{t'\in {\Omega}:\; d(t')>d\}$.  Suppose on the contrary that this result is false. Then there exist $k\in \N$ and $H'\subset {\Omega}$ with $\eta(H')>0$ such that
\begin{equation}
\label{e-f8}
d(t)>d+\frac2k \quad \mbox{ and }\quad \mathcal L_d(K^t)=0\qquad \mbox{ for all }t\in H'.
\end{equation}
  Set
$$
F_n'=\{t\in B(y_n',\delta):\; \mathcal L_d(K^t)=0\}, \quad n\in \N.
$$
Clearly, $H'\cap {\Omega}_n'\subset F'_n$ for each $n\in \N$. Since ${\Omega}=\bigcup_{n=1}^\infty {\Omega}_n'$ and $\eta(H')>0$,  there exists $m\in \N$ so that $\eta(H'\cap {\Omega}_m')>0$.  Hence $H'\cap {\Omega}_m'\neq \emptyset$. Taking $t\in H'\cap {\Omega}_m'$ and applying   \eqref{e-to4}, \eqref{e-f8} and \eqref{e-f6'} gives
$$d_{\mu_{y_m'}}(y_m')=d(y_m')\geq d(t)-\frac1k>d+\frac{1}{k}>d+\frac{ \psi(\delta)}{\log (1/\theta)}.$$
 Now by Proposition \ref{pro-f1.1}(ii), $\Pi^t_*(\mu_{y_m'})\ll \mathcal L_d$ for $\eta$-a.e.~$t\in B(y_m', \delta)$.  This implies that  $\eta(F_m')=0$.  Since $H'\cap {\Omega}_m'\subset F_m'$, it follows that $\eta(H'\cap {\Omega}_m')=0$, leading to a contradiction.
\end{proof}

\section{Translational family of  IFSs generated by a dominated lower triangular $C^1$ IFS}
\label{S-triangular}
In this section, we show that under mild assumptions, a translational family of $C^1$ IFSs generated by a dominated lower triangular $C^1$ IFS, satisfies the GTC. To begin with, let $S$ be a compact  subset of  $\R^d$ with non-empty interior.

\begin{de}
{\rm
Let $\ell\in \N$ with $\ell\geq 2$. We say that $\mathcal F=\{f_i\}_{i=1}^\ell$ a {\it dominated lower triangular $C^1$ IFS} on $S$ if the following conditions hold:
\begin{itemize}
\item[(i)] $f_i(S)\subset {\rm int} (S)$, $i=1,\ldots, \ell$.
\item[(ii)] There exists a bounded open connected set $U\supset S$ such that each  $f_i$ extends to a contracting $C^1$ diffeomorphism $f_i: U\to f_i(U)$ with $\overline{f_i(U)}\subset U$.
\item[(iii)] For each $z\in S$ and $i\in \{1,\ldots, \ell\}$, the Jacobian matrix $D_zf_i$ of $f_i$ at $z$ is a lower triangular matrix such that
    $$
    |(D_zf_i)_{jj}|\leq |(D_zf_i)_{kk}|\qquad  \mbox{ for all }1\leq k\leq j\leq d.
    $$
\end{itemize}
}
\end{de}

In the remaining part of this section, we fix a dominated lower triangular $C^1$ IFS $\mathcal F=\{f_i\}_{i=1}^\ell$ on $S$.

By continuity, there exists a small $r_0>0$ such that the following holds. Setting $$f_i^{\pmb{\mathfrak{t}}}:=f_i+\mathbf{t}_i$$ for $1\leq i\leq \ell$ and  $\pmb{\mathfrak{t}}=(\mathbf{t}_1,\ldots, \mathbf{t}_\ell)\in \R^{\ell d}$ with $|\pmb{\mathfrak{t}}|< r_0$, we have $f_i^{\pmb{\mathfrak{t}}}(S)\subset {\rm int}(S)$ for each $i$.

Write $\Delta:=\{\pmb{\mathfrak{t}}\in \R^{\ell d}:\; |\pmb{\mathfrak{t}}|<r_0\}$ and set
$$
\mathcal F^{\pmb{\mathfrak{t}}}=\{f_i^{\pmb{\mathfrak{t}}}\}_{i=1}^\ell,\qquad \pmb{\mathfrak{t}}\in \Delta.
$$
We call  $\mathcal F^{\pmb{\mathfrak{t}}}$, ${\pmb{\mathfrak{t}}\in \Delta}$, a {\it translational family of IFSs  generated by $\mathcal F$}.  For $\bi=i_1\ldots i_n\in \Sigma_n$, we write $f_{\bi}^{\pmb{\mathfrak{t}}}=f_{i_1}^{\pmb{\mathfrak{t}}}\circ\cdots\circ
f_{i_n}^{\pmb{\mathfrak{t}}}$.

For a $C^1$ map $g: S\to \R^d$ and $z_1,\ldots, z_d\in S$, we write
\begin{equation}
\label{e-to5}
D_{z_1,\ldots, z_d}^*g=\left[\begin{array}{c}
\nabla^Tg_1(z_1) \\
\vdots \\
\nabla^Tg_d(z_d)
\end{array}\right]=\left[
\begin{array}{ccc}
\frac{\partial g_1}{\partial x_1}(z_1) & \cdots &\frac{\partial g_1}{\partial x_d}(z_1)\\
\vdots & \ddots & \vdots \\
\frac{\partial g_d}{\partial x_1}(z_d) & \cdots &\frac{\partial g_d}{\partial x_d}(z_d)
\end{array}
\right],
\end{equation}
where $g_i$ is the $i$-th component of the map $g$, $i=1,\ldots, d$. Clearly,
\begin{equation}
\label{e-to6}
(D_{z_1,\ldots, z_d}^*g)_{ij}=(D_{z_i}g)_{ij}\quad \mbox{ for all }1\leq i,j\leq d.
\end{equation}

The main result in this section is the following.

\begin{thm}
\label{thm-3.3} Let $\mathcal F^{\pmb{\mathfrak{t}}}$, ${\pmb{\mathfrak{t}}\in \Delta}$, be a  translational family of IFSs  generated by a dominated lower triangular $C^1$ IFS $\mathcal F$ defined on a compact convex subset $S$ of $\R^d$.  Suppose in addition that
\begin{equation}
\label{e-con}
\rho:=\max_{1\leq i,j\leq \ell:\; i\neq j} \left(\sup_{y\in S} \|D_yf_i\|+\sup_{z\in S} \|D_zf_j\|\right)<1.
\end{equation}
  Then
$\mathcal F^{\pmb{\mathfrak{t}}}$, $\pmb{\mathfrak{t}}\in \Delta$, satisfies the GTC with respect to  $\ell d$-dimensional Lebesgue measure $\mathcal L_{\ell d}$ restricted {\color{red} to} $\Delta$.
\end{thm}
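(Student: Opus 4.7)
The plan is to bound the bad-parameter set $\{\tt \in B(t_0,\delta) : |\Pi^{\tt}(\bi) - \Pi^{\tt}(\bj)| < r\}$ by a Fubini argument, treating one of the $d$-dimensional translational coordinates as the transversality variable and integrating over the remaining $(\ell-1)d$ components. Let $\omega = \bi \wedge \bj$, $\bi' = \sigma^{|\omega|}\bi$, $\bj' = \sigma^{|\omega|}\bj$; by construction the primed sequences start with distinct symbols $a := i'_1 \neq j'_1 =: b$. The coordinatewise mean value theorem on the convex set $S$ produces points $z_1(\tt),\ldots,z_d(\tt) \in S$ such that
$$\Pi^{\tt}(\bi) - \Pi^{\tt}(\bj) = A(\tt)\, v(\tt), \qquad A(\tt) := D^*_{z_1(\tt),\ldots,z_d(\tt)} f^{\tt}_\omega, \qquad v(\tt) := \Pi^{\tt}(\bi') - \Pi^{\tt}(\bj').$$
Since domination of lower triangular matrices is preserved under products, $A(\tt)$ is lower triangular with non-increasing diagonal magnitudes.

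Next I would establish a transversality estimate for the map $\mathbf{t}_a \mapsto v(\tt)$ (the other $(\ell-1)d$ components of $\tt$ being frozen). Writing $v(\tt) = \mathbf{t}_a + G(\tt)$ with $G(\tt) := f_a(\Pi^{\tt}(\sigma\bi')) - f_b(\Pi^{\tt}(\sigma\bj')) - \mathbf{t}_b$, one expands the implicit $\mathbf{t}_a$-dependence of $G$ recursively through later iterates, groups the resulting terms according to whether they come from the $\sigma\bi'$-branch or the $\sigma\bj'$-branch, and invokes the hypothesis \eqref{e-con} — which guarantees $\|Df_a\| + \|Df_b\| \leq \rho < 1$ — to dominate the combined series. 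The outcome should be that $\mathbf{t}_a \mapsto v$ is a bi-Lipschitz diffeomorphism onto its image with both Lipschitz constants and the Jacobian determinant bounded in terms of $\rho$ alone.

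Applying this change of variables together with the singular value decomposition of $A(\tt)$ then yields
$$\mathcal L_d\{\mathbf{t}_a : |A(\tt) v(\tt)| < r\} \leq C_1\, \mathcal L_d\{v \in B_R : |A(\tt) v| < r\} \leq C_2 \min_{0 \leq k \leq d} \frac{r^k}{\phi^k(A(\tt))}$$
for a fixed ball $B_R \subset \R^d$ containing the image of $v$. Since translations leave the pointwise differentials $Df_i$ unchanged, the $\tt$-dependence of $A(\tt)$ enters only through the evaluation points $z_j(\tt)$, which move by $O(\delta)$ on $B(t_0,\delta)$. Uniform continuity of the $Df_i$ on a compact neighbourhood of $K^{t_0}$ then perturbs each of the $|\omega|$ chain-rule factors of $A(\tt)$ by at most a multiplicative $e^{\psi_0(\delta)}$ with $\psi_0(\delta) \to 0$, and the dominated lower triangular structure renders $\phi^k$ of the resulting product stable under such per-factor perturbations, so
$$\phi^k(A(\tt)) \geq e^{-|\omega|\psi(\delta)}\, \phi^k(D_y f^{t_0}_\omega)$$
for an appropriate $y$ and some $\psi$ with $\psi(\delta) \to 0$. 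Fubini over the bounded remaining $(\ell-1)d$ translation coordinates completes the estimate, giving the desired bound $C(t_0,\delta) e^{|\omega|\psi(\delta)} Z^{t_0}_\omega(r)$.

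The hardest step will be the transversality one. A direct chain-rule estimate using only the common Lipschitz constant $\theta$ gives $\|\partial G/\partial \mathbf{t}_a\| \leq (\|Df_a\| + \|Df_b\|)/(1-\theta)$, which may exceed one even under \eqref{e-con} and so fails to yield a bi-Lipschitz map. Extracting the correct bound governed by $\rho$ rather than $\rho/(1-\theta)$ will require a more delicate combinatorial organisation of the contributions along $\sigma\bi'$ and $\sigma\bj'$, so that the geometric series is summed using $\|Df_a\| + \|Df_b\| \leq \rho < 1$ directly rather than through a worst-case term-by-term bound of $\theta$. A secondary technical subtlety is the stability of $\phi^k$ for perturbed products of lower triangular matrices; this fails in general (small entry changes can produce large singular-value changes) but holds under the domination hypothesis, and verifying it carefully is where the non-increasing-diagonal condition becomes indispensable.
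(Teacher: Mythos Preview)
Your overall architecture is the same as the paper's: the mean value theorem factorisation $\Pi^{\tt}(\bi)-\Pi^{\tt}(\bj)=A(\tt)v(\tt)$, a transversality estimate for one $d$-block of coordinates, Fubini over the remaining $(\ell-1)d$ coordinates, and a distortion estimate comparing $A(\tt)$ with $D_yf^{t_0}_\omega$. The distortion part of your outline matches Proposition~\ref{pro-3.2} in spirit and is essentially correct.

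The gap is precisely where you flag it, but your proposed fix will not work. You commit to $\mathbf t_a$ (the coordinate of the first symbol of $\bi'$) as the transversality variable and hope that a reorganisation of the series will give $\|\partial G/\partial\mathbf t_a\|<\rho$. It will not: the sharp bound on $\|E_a\|$ is
\[
\lambda_a \;=\; \sum_{n\ge 1,\;a_{n+1}=a}\rho_{a_1}\cdots\rho_{a_n}\;+\;\sum_{n\ge 1,\;b_{n+1}=a}\rho_{b_1}\cdots\rho_{b_n},
\]
and this can genuinely exceed $1$. For instance with $\ba=aaa\ldots$ one gets $\lambda_a\ge \rho_a/(1-\rho_a)$, which is unbounded as $\rho_a\uparrow 1$; and $\rho_a$ close to $1$ is permitted by \eqref{e-con} provided the other $\rho_j$ are small. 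No regrouping of the same terms can produce a bound below $1$, because the terms are what they are. Thus the map $\mathbf t_a\mapsto v$ need not satisfy $\|\partial v/\partial\mathbf t_a - I\|<1$, and the Neumann-series route to a uniform bi-Lipschitz bound collapses.

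The paper's remedy is not to sharpen the estimate for a fixed coordinate but to \emph{choose the coordinate adaptively}: for each pair $(\ba,\bb)$ one picks $k^*\in\{a,b\}$ rather than always $a$. The key identity is the telescoping sum
\[
\lambda_a(1-\rho_a)+\lambda_b(1-\rho_b)=\rho_a+\rho_b\le\rho,
\]
which forces $\min(\lambda_a,\lambda_b)<\rho$. With $k^*$ chosen so that $\lambda_{k^*}<\rho$, one gets $\|E_{k^*}\|<\rho<1$ uniformly, hence $\|(\partial v/\partial\mathbf t_{k^*})^{-1}\|\le (1-\rho)^{-1}$ and the change-of-variables argument goes through. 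This adaptive choice (attributed to Solomyak) is exactly the ``delicate'' step you anticipate, but it is a choice of variable, not a rearrangement of a series; your outline misses this and as written would only recover the theorem under the stronger hypothesis $\max_i\rho_i<1/2$ (or even $1/3$), not under \eqref{e-con}.
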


The proof of the above theorem is based on the following.
\begin{pro}
\label{pro-3.2}Let $\mathcal F^{\pmb{\mathfrak{t}}}$, ${\pmb{\mathfrak{t}}\in \Delta}$, be a  translational family of IFSs  generated by a dominated lower triangular $C^1$ IFS $\mathcal F$  on a compact  subset $S$ of $\R^d$. Then there exists a function $h: (0, r_0)\to (0,\infty)$ with $\lim_{\delta\to 0} h(\delta)=0$ such that for each $\delta\in (0,r_0)$, there is  $C(\delta)\geq 1$ so that
\begin{equation}
\label{e-e3.2}
\| D_y f_{\pmb{\omega}}^{\pmb{\mathfrak{s}}}\cdot(D^*_{z_1,\ldots, z_d}  f_{\pmb{\omega}}^{\pmb{\mathfrak{t}}})^{-1}\|\leq C(\delta) e^{nh(\delta)}
\end{equation}
for every $n\in \N$, $\pmb{\omega}\in \Sigma_n$, $y, z_1,\ldots, z_d\in S$ and $\pmb{\mathfrak{s}},\pmb{\mathfrak{t}}\in \Delta$ with
$|\pmb{\mathfrak{s}}-\pmb{\mathfrak{t}}|<\delta$.
\end{pro}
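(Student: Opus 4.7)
The plan is to exploit the fact that translations leave Jacobians invariant ($D_zf^{\pmb{\mathfrak t}}_i=D_zf_i$) together with the lower triangular structure of the $D_zf_i$, reducing the operator norm estimate to entry-wise comparisons controlled by uniform continuity on the compact set $S$. By the chain rule, $A:=D_yf_{\pmb\omega}^{\pmb{\mathfrak s}}=\prod_{m=1}^n D_{y^{(m)}}f_{i_m}$ with $y^{(n)}=y$ and $y^{(m)}=f^{\pmb{\mathfrak s}}_{i_{m+1}\cdots i_n}(y)$; and since $D_zf^{\pmb{\mathfrak t}}_{\pmb\omega}$ is lower triangular for every $z$, each row of $B:=D^*_{z_1,\ldots,z_d}f^{\pmb{\mathfrak t}}_{\pmb\omega}$ coincides with the corresponding row of $D_{z_j}f^{\pmb{\mathfrak t}}_{\pmb\omega}=\prod_m D_{z_j^{(m)}}f_{i_m}$, where $z_j^{(m)}=f^{\pmb{\mathfrak t}}_{i_{m+1}\cdots i_n}(z_j)$. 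Hence $A$, $B$, and $AB^{-1}$ are all lower triangular, so $\|AB^{-1}\|\leq d\max_{p,q}|(AB^{-1})_{pq}|$ and the task reduces to entrywise bounds.

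The quantitative ingredient is that the compared orbits become close geometrically. With $\lambda:=\max_i\sup_{z\in S}\|D_zf_i\|\in(0,1)$, the bound $|f^{\pmb{\mathfrak s}}_i(u)-f^{\pmb{\mathfrak t}}_i(v)|\leq\lambda|u-v|+\delta$ and a short induction yield $|y^{(m)}(\pmb{\mathfrak s})-z_j^{(m)}(\pmb{\mathfrak t})|\leq\lambda^{n-m}\,\mathrm{diam}(S)+\delta/(1-\lambda)$. Since $|\det D_zf_i|=\prod_j|(D_zf_i)_{jj}|$ is continuous and strictly positive on the compact set $S$, each diagonal entry $|(D_zf_i)_{jj}|$ is uniformly bounded below by some $c>0$ and above by $\lambda$; hence $z\mapsto\log|(D_zf_i)_{jj}|$ has a uniform modulus of continuity $\omega_0$ on $S$. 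Summing the telescoped log-ratio gives $\bigl|\log|A_{jj}/B_{jj}|\bigr|\leq\sum_{m=1}^n\omega_0(|y^{(m)}-z_j^{(m)}|)$, and splitting at the cutoff $m=n-N(\delta)$ with $N(\delta):=\lceil\log\delta/\log\lambda\rceil$ yields a bound $\leq n\omega_0(C_1\delta)+C_2(\delta)$; equivalently $|A_{jj}/B_{jj}|\leq C(\delta)e^{nh(\delta)}$ with $h(\delta)\to0$ as $\delta\to0$.

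To upgrade this diagonal estimate to a bound on all entries of $AB^{-1}$, I would decompose $B=D_B(I+D_B^{-1}N_B)$ with $D_B$ diagonal and $N_B$ strictly lower triangular, so that $B^{-1}=\sum_{k=0}^{d-1}(-D_B^{-1}N_B)^kD_B^{-1}$ is a finite sum. Each entry of $AB^{-1}$ is then a polynomial of bounded degree (depending only on $d$) in the entries of $A$ and in the ratios $(N_B)_{pq}/B_{qq}$. The dominated condition on $\mathcal F$, combined with the uniform bounds $|(D_zf_i)_{pq}|\leq\lambda$ and $|(D_zf_i)_{qq}|\geq c>0$, keeps the per-step normalized entry ratios $|L_{pq}/L_{qq}|$ bounded by an absolute constant, and the telescoping-plus-splitting argument from the diagonal case extends to off-diagonal normalized entries of $A$ and $B$. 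Assembling these estimates entry by entry yields $\|AB^{-1}\|\leq C(\delta)e^{nh(\delta)}$ as required.

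The main obstacle is that the naive separation $\|AB^{-1}\|\leq\|A\|\,\|B^{-1}\|$ is useless, since $\|B^{-1}\|$ can grow exponentially in $n$: one has to exploit the pointwise closeness of $A$ and $B$ rather than bounding them independently. A secondary subtlety is that $B$ is not literally a chain-rule product but only a row-by-row product, so the standard matrix-product perturbation inequalities do not apply directly; the lower triangular structure is precisely what allows each row to be handled as a separate chain-rule product and the off-diagonal ratios to be controlled via the dominated condition.
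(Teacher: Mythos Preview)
Your overall strategy matches the paper's: reduce to entry-wise estimates on lower triangular matrices, with the diagonal ratios $|A_{jj}/B_{jj}|$ controlled by orbit-closeness and uniform continuity of $z\mapsto\log|(D_zf_i)_{jj}|$. Your diagonal argument is essentially the paper's Steps~1--2 combined into one (the paper separates the ``different base points, same $\pmb{\mathfrak t}$'' and ``same base point, different $\pmb{\mathfrak s},\pmb{\mathfrak t}$'' contributions, whereas your single splitting at $m=n-N(\delta)$ handles both at once; this is slightly more streamlined but equivalent).

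The gap is in the off-diagonal step. Your claim that ``the telescoping-plus-splitting argument from the diagonal case extends to off-diagonal normalized entries of $A$ and $B$'' is not justified and, as stated, is not correct: the $(p,q)$-entry of an $n$-fold product of lower triangular matrices is a \emph{sum} over monotone index paths $p=r_0\ge r_1\ge\cdots\ge r_n=q$, not a product, so there is no logarithmic telescoping available. What is actually needed---and what the paper isolates as a separate combinatorial lemma---is the polynomial-in-$n$ bound
\[
|(A_1\cdots A_n)_{ij}|\le (cn)^{i-j}\,|(A_1\cdots A_n)_{jj}|
\]
whenever each factor satisfies $|(A_m)_{ij}|\le c|(A_m)_{jj}|$ with nonincreasing diagonals (proved by a short induction on $n$). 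Your ``per-step normalized entry ratios bounded by an absolute constant'' is exactly the hypothesis of this lemma, but the conclusion for the product is not a constant bound---it grows like $(cn)^{d}$, and this polynomial factor must then be absorbed into $C(\delta)e^{n\delta}$. Once you have this lemma, together with your diagonal estimate (to pass between the base points $z_p$ and $z_q$ appearing in different rows of $B$), you get $|B_{pq}/B_{qq}|\lesssim (cn)^d$ up to a subexponential factor, and your Neumann-series expansion $B^{-1}=D_B^{-1}\sum_{k<d}(-N_BD_B^{-1})^k$ then yields $|(AB^{-1})_{pq}|\lesssim (cn)^{O(d)}\max_j|A_{jj}/B_{jj}|$ as desired. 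The paper bounds entries of $B^{-1}$ via the adjugate formula and Hadamard's inequality rather than the Neumann series, but that difference is cosmetic; the substantive missing ingredient in your proposal is the product-matrix inequality above.
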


In the next two subsections we prove Proposition \ref{pro-3.2} and Theorem \ref{thm-3.3} respectively.

\subsection{Proof of Proposition \ref{pro-3.2}}

We first prove several auxiliary lemmas.
 \begin{lem}
\label{lem-5.1}
Let $c\geq 1$ and $d\in \N$. Let $A$ be a real $d\times d$ non-singular lower triangular matrix such that
\begin{equation}
\label{e-f4.1}
|A_{ij}|\leq c|A_{jj}|\quad \mbox{ for all }1\leq i,j\leq d.
\end{equation}
Then
$$
|(A^{-1})_{ij}|\leq (c\sqrt{d})^{d-1}|(A^{-1})_{ii}|\quad  \mbox{ for all }1\leq i,j\leq d.
$$
\end{lem}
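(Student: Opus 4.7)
The plan is to reduce the problem to a triangular matrix with controlled entries and then invoke Hadamard's inequality on a single minor.

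First I would factor out the diagonal on the right. Set $D=\mathrm{diag}(A_{11},\ldots,A_{dd})$ (which is invertible since $A$ is non-singular and lower triangular, so $A_{jj}\neq 0$ for all $j$), and $L=AD^{-1}$, so $L_{ij}=A_{ij}/A_{jj}$. Then $L$ is lower triangular with $L_{ii}=1$, and the hypothesis \eqref{e-f4.1} gives $|L_{ij}|\leq c$ for every $1\leq i,j\leq d$. From $A^{-1}=D^{-1}L^{-1}$ and the fact that $D^{-1}$ is diagonal, we get
\[
(A^{-1})_{ij}=\frac{(L^{-1})_{ij}}{A_{ii}},\qquad (A^{-1})_{ii}=\frac{1}{A_{ii}},
\]
so the claim reduces to the scale-free inequality $|(L^{-1})_{ij}|\leq (c\sqrt{d})^{d-1}$ for all $i,j$. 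Since $L^{-1}$ is lower triangular, this is trivial when $j>i$, and an equality when $i=j$, so only the case $i>j$ requires work.

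Next I would compute $(L^{-1})_{ij}$ by the cofactor formula. As $L$ is lower triangular with unit diagonal, $\det L=1$, and therefore
\[
(L^{-1})_{ij}=(-1)^{i+j}M_{ji}(L),
\]
where $M_{ji}(L)$ is the $(d-1)\times(d-1)$ minor obtained by deleting row $j$ and column $i$ from $L$. Hadamard's inequality bounds this minor by the product of the Euclidean norms of its rows. Each row of that minor has $d-1$ entries of $L$, all of absolute value at most $c$ (recalling $c\geq 1$ so the diagonal $1$'s are included), hence its $\ell^2$ norm is at most $c\sqrt{d-1}\leq c\sqrt{d}$. Taking the product over the $d-1$ rows gives $|M_{ji}(L)|\leq (c\sqrt{d})^{d-1}$, which is exactly the desired bound.

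The whole argument is short and the only subtle step is choosing the \emph{right} factorization: the hypothesis \eqref{e-f4.1} controls each entry by the diagonal of its \emph{column}, so one must pull $D$ out on the right rather than the left; pulling it out on the left would give the much weaker estimate $|L_{ij}|\leq c|A_{jj}|/|A_{ii}|$, with ratios that cannot be absorbed into a constant depending only on $c$ and $d$. Once the factorization is chosen correctly, Hadamard's inequality applied to a single cofactor yields the bound in one line, and no induction on $d$ is needed.
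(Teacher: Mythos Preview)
Your proof is correct and follows essentially the same approach as the paper: both arguments express $(A^{-1})_{ij}$ via the cofactor (adjugate) formula and then bound the relevant $(d-1)\times(d-1)$ minor using Hadamard's inequality. The only cosmetic difference is that you first normalize by writing $A=LD$ and bound the entries of $L$ uniformly by $c$ before applying Hadamard to the rows of the minor, whereas the paper applies Hadamard directly to the columns of $A$ using $|v_k|\leq c\sqrt{d}\,|A_{kk}|$; the computations are equivalent.
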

\begin{proof}
It is well known (see e.g.~\cite{HornJohnson1985}) that $A^{-1}=\frac{1}{\det(A)} \mbox{adj}(A)$, where $\mbox{adj}(A)$ is the adjugate matrix of $A$ defined by
$$
(\mbox{adj}(A))_{ij}=(-1)^{i+j}\det(A(j,i)),\qquad 1\leq i,j\leq d,
$$
here $A(j,i)$ is the $(d-1)\times (d-1)$ matrix that results from $A$ by removing the $j$-th row and $i$-th column. By the Hadamard's inequality (see e.g.~\cite[Corollary 7.8.2]{HornJohnson1985}), $|\det(A(j,i))|$ is bounded above by the product of the Euclidean norms of the columns of $A(j,i)$.  In particular, this implies that
$$
|\det(A(j,i))|\leq \prod_{1\leq k\leq d:\; k\neq i}|v_k|,
$$
where $v_k$ denotes the $k$-th column vector of $A$.  By \eqref{e-f4.1},
$$|v_k|=\sqrt{\sum_{i=1}^d(A_{ik})^2}\leq c\sqrt{d}|A_{kk}|,$$
so $|\det(A(j,i))|\leq (c\sqrt{d})^{d-1} \prod_{1\leq k\leq d:\; k\neq i} |A_{kk}|$.
Hence for given $1\leq i,j\leq d$,
$$
\frac{|(A^{-1})_{ij}|}{|(A^{-1})_{ii}|}=\frac{|\det(A(j,i)|}{\det(A)\cdot |(A^{-1})_{ii}|}\leq \frac{(c\sqrt{d})^{d-1} \prod_{1\leq k\leq d:\; k\neq i} |A_{kk}|}{\det(A)\cdot |(A^{-1})_{ii}|}=(c\sqrt{d})^{d-1}.
$$
\end{proof}

For $c\geq 1$ and $d\in \N$, let $\mathcal T_c(d)$ denote the collection of real $d\times d$  lower triangular matrices $A=(a_{ij})$ satisfying the following two conditions: \begin{itemize}
\item[(i)] $|a_{11}|\geq |a_{22}|\geq \ldots \geq |a_{dd}|>0$;
\item[(ii)] $|a_{ij}|\leq c|a_{jj}|$ for all $ 1\leq i,j\leq d$.
\end{itemize}
Then we have the following estimates.
\begin{lem}
\label{lem-a2}
Let $n\in \N$ and $A_1,\ldots, A_n\in \mathcal T_c(d)$. Then for  $1\leq j\leq i\leq d$,
\begin{equation}\label{e-f4.2}
|(A_1\cdots A_n)_{ij}|\leq (cn)^{i-j} |(A_1\cdots A_n)_{jj}|.
\end{equation}
\end{lem}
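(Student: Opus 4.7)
The plan is to prove the inequality by induction on $n$, with the induction absorbing the combinatorial complications that arise from unfolding the matrix product. The base case $n=1$ is immediate from the defining property of $\mathcal{T}_c(d)$: we have $|(A_1)_{ij}| \leq c\,|(A_1)_{jj}|$, and since $c \geq 1$ and $i-j \geq 0$ this is at most $c^{i-j}|(A_1)_{jj}| = (c\cdot 1)^{i-j}|(A_1)_{jj}|$.

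For the inductive step, set $B = A_1\cdots A_n$, which is again lower triangular with $B_{kk} = \prod_{m=1}^n (A_m)_{kk}$. Expanding via lower-triangularity,
\[
(BA_{n+1})_{ij} = \sum_{k=j}^{i} B_{ik}\,(A_{n+1})_{kj}.
\]
I plan to apply three estimates to each summand: the inductive hypothesis $|B_{ik}| \leq (cn)^{i-k}|B_{kk}|$; the defining off-diagonal bound $|(A_{n+1})_{kj}| \leq c\,|(A_{n+1})_{jj}|$ for $k>j$ (with no factor of $c$ when $k=j$); and the monotonicity of the diagonals in each $A_m$, which aggregates via $k \geq j$ to $|B_{kk}| \leq |B_{jj}|$. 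Separating the $k=j$ contribution from the rest then yields
\[
|(BA_{n+1})_{ij}| \leq |(BA_{n+1})_{jj}| \left( (cn)^{i-j} + c \sum_{k=j+1}^{i} (cn)^{i-k} \right).
\]

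To close the induction it remains to verify that the parenthetical expression is dominated by $(c(n+1))^{i-j}$. With $x = cn$ and $\kappa = i-j$, this amounts to
\[
x^\kappa + c\bigl(1 + x + \cdots + x^{\kappa-1}\bigr) \leq (x+c)^\kappa,
\]
which follows termwise from the binomial theorem: $(x+c)^\kappa - x^\kappa = \sum_{\ell=0}^{\kappa-1} \binom{\kappa}{\ell}c^{\kappa-\ell}x^\ell$, and each monomial satisfies $c\,x^\ell \leq \binom{\kappa}{\ell}c^{\kappa-\ell}x^\ell$ because $\binom{\kappa}{\ell}c^{\kappa-\ell-1} \geq 1$ under the standing assumption $c \geq 1$.

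The main obstacle is the bookkeeping in the inductive step, where one must simultaneously exploit both defining properties of $\mathcal{T}_c(d)$: the off-diagonal bound to absorb the $(A_{n+1})_{kj}$ factor, and the monotonicity of the diagonal to replace $|B_{kk}|$ by $|B_{jj}|$. Isolating the $k=j$ term is what forces the correct $x^\kappa$ piece on the left of the final inequality; without this separation the inductive constant would not close up. One could alternatively expand $(A_1\cdots A_n)_{ij}$ directly as a sum over weakly decreasing paths $i = k_0 \geq \cdots \geq k_n = j$ and classify paths by their number of jumps, but this leads to $\sum_J \binom{n}{J}\binom{\kappa-1}{J-1}c^J$ and reduces to the same binomial comparison, so the induction is the most transparent route.
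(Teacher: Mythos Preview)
Your proof is correct and follows essentially the same inductive strategy as the paper. The only cosmetic difference is that the paper peels off the leftmost factor (writing $A=A_1$, $B=A_2\cdots A_{k+1}$ and separating the $p=i$ summand), whereas you peel off the rightmost factor and separate the $k=j$ summand; both routes lead to the identical combinatorial inequality $(cn)^{i-j}+c\sum_{q=0}^{i-j-1}(cn)^q\le (c(n+1))^{i-j}$, which you justify in more detail than the paper does.
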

\begin{proof}
We prove by induction on $n$. Since $A_1\in \mathcal T_c(d)$, the inequality \eqref{e-f4.2} holds when $n=1$. Now assume that \eqref{e-f4.2} holds  when $n=k$. Below we show that it also holds when $n=k+1$.

Given $A_1,\ldots, A_{k+1}\in \mathcal T_c(d)$, we write $A=A_1$ and $B=A_2\cdots A_{k+1}$. Clearly $B$ is lower triangular. By the induction assumption, $|B_{ij}|\leq (ck)^{i-j} |B_{jj}|$ for each pair $(i,j)$ with $1\leq j\leq i\leq d$.

Now fix a pair $(i,j)$ with $1\leq j\leq i\leq d$.
Observe that
\begin{equation}
\label{e-f4.3}
\frac{(AB)_{ij}}{(AB)_{jj}}=\sum_{ p=j}^i\frac{A_{ip}}{A_{jj}} \cdot \frac{B_{pj}}{B_{jj}}=\frac{A_{ii}}{A_{jj}} \cdot \frac{B_{ij}}{B_{jj}}+\sum_{p=j}^{i-1} \frac{A_{ip}}{A_{jj}} \cdot \frac{B_{pj}}{B_{jj}}.
\end{equation}
Applying the inequalities $|A_{ii}|\leq |A_{jj}|$, $|B_{ij}|\leq (ck)^{i-j}|B_{jj}|$, $|A_{ip}|\leq c|A_{pp}|\leq c|A_{jj}|$ and
$|B_{pj}|\leq (ck)^{p-j}|B_{jj}|$ to \eqref{e-f4.3} gives
 $$
 \left|\frac{(AB)_{ij}}{(AB)_{jj}}\right|\leq (ck)^{i-j} + c\sum_{p=j}^{i-1}(ck)^{p-j}\leq (c(k+1))^{i-j}.
 $$
 Hence \eqref{e-f4.2} holds for $n=k+1$.
\end{proof}

\begin{lem}\label{lem-b20}
 Let $\mathcal F^{\pmb{\mathfrak{t}}}=\{f_i^{\tt}\}_{i=1}^\ell$, ${\pmb{\mathfrak{t}}\in \Delta}$, be a  translational family of IFSs on a compact subset $S$ of $\R^d$ generated by a  $C^1$ IFS $\mathcal F=\{f_i\}_{i=1}^\ell$.  Let $\theta\in (0,1)$ be a common Lipschitz constant of $f_1,\ldots, f_\ell$ on $S$. That is,
  $$
  |f_i(u)-f_i(v)|\leq \theta|u-v| \quad \mbox{ for all }1\leq i\leq \ell\mbox{ and }u, v\in S.
  $$
Then for  ${\mathfrak{s}}, {\mathfrak{t}}\in \Delta$, $u,v\in S$,  $n\in \N$ and $\pmb{\tau}\in\Sigma_{n}$,
\begin{equation}\label{b17}
|
  f^{\pmb{\mathfrak{t}} }_{\pmb{\tau}}(u)
  -
  f^{\pmb{\mathfrak{s}} }_{\pmb{\tau}}(v)
  | \leq
    \frac{|\pmb{\mathfrak{t}} -\pmb{\mathfrak{s}}|}{1-\theta}
    +
    \theta^{n}
    \left(
    |u-v|-
    \frac{|\pmb{\mathfrak{t}} -\pmb{\mathfrak{s}}|}{1-\theta}
    \right).
  \end{equation}
  In particular,
  \begin{equation}\label{b62}
  |
  f^{\pmb{\mathfrak{t}} }_{\pmb{\tau}}(u)
  -
  f^{\pmb{\mathfrak{s}}}_{\pmb{\tau}}(u)
  | \leq
    \frac{|\pmb{\mathfrak{t}} -\pmb{\mathfrak{s}}|}{1-\theta}\quad \mbox{ and } \quad
    |
  f^{\pmb{\mathfrak{t}} }_{\pmb{\tau}}(u)
  -
  f^{\pmb{\mathfrak{t}}}_{\pmb{\tau}}(v)
  | \leq
    \theta^n |u-v|.
  \end{equation}

\end{lem}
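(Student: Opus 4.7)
The plan is to prove \eqref{b17} by induction on $n$, and then obtain \eqref{b62} as two immediate specializations.

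\emph{Base case} $n=1$. Here $\pmb{\tau} = \tau_1 \in \{1,\ldots,\ell\}$, and writing out the definition of $f^{\pmb{\mathfrak{t}}}_{\tau_1}$ gives
$$
f^{\pmb{\mathfrak{t}}}_{\tau_1}(u) - f^{\pmb{\mathfrak{s}}}_{\tau_1}(v) = \bigl(f_{\tau_1}(u) - f_{\tau_1}(v)\bigr) + (\mathbf{t}_{\tau_1} - \mathbf{s}_{\tau_1}).
$$
The triangle inequality together with the Lipschitz hypothesis on $f_{\tau_1}$ and the estimate $|\mathbf{t}_{\tau_1} - \mathbf{s}_{\tau_1}| \leq |\pmb{\mathfrak{t}} - \pmb{\mathfrak{s}}|$ (which holds because the $\tau_1$-block of $\pmb{\mathfrak{t}} - \pmb{\mathfrak{s}}$ has norm bounded by $|\pmb{\mathfrak{t}} - \pmb{\mathfrak{s}}|$) yields $\theta |u-v| + |\pmb{\mathfrak{t}} - \pmb{\mathfrak{s}}|$. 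A one-line algebraic check shows this equals the right-hand side of \eqref{b17} with $n=1$.

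\emph{Inductive step.} Assume \eqref{b17} holds for some $n$, and let $\pmb{\tau} = \tau_1 \pmb{\tau}' \in \Sigma_{n+1}$ with $\pmb{\tau}' \in \Sigma_n$. Using $f^{\pmb{\mathfrak{t}}}_{\pmb{\tau}} = f^{\pmb{\mathfrak{t}}}_{\tau_1} \circ f^{\pmb{\mathfrak{t}}}_{\pmb{\tau}'}$ and the base-case estimate applied to the outer map (with inputs $f^{\pmb{\mathfrak{t}}}_{\pmb{\tau}'}(u)$ and $f^{\pmb{\mathfrak{s}}}_{\pmb{\tau}'}(v)$, which lie in $S$ by the choice of $r_0$), one gets
$$
|f^{\pmb{\mathfrak{t}}}_{\pmb{\tau}}(u) - f^{\pmb{\mathfrak{s}}}_{\pmb{\tau}}(v)| \leq \theta\, |f^{\pmb{\mathfrak{t}}}_{\pmb{\tau}'}(u) - f^{\pmb{\mathfrak{s}}}_{\pmb{\tau}'}(v)| + |\pmb{\mathfrak{t}} - \pmb{\mathfrak{s}}|.
$$
Plugging in the inductive hypothesis and collecting the $|\pmb{\mathfrak{t}} - \pmb{\mathfrak{s}}|$ terms via $\theta\cdot\frac{|\pmb{\mathfrak{t}}-\pmb{\mathfrak{s}}|}{1-\theta} + |\pmb{\mathfrak{t}}-\pmb{\mathfrak{s}}| = \frac{|\pmb{\mathfrak{t}}-\pmb{\mathfrak{s}}|}{1-\theta}$ gives exactly the right-hand side of \eqref{b17} at level $n+1$.

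Finally, \eqref{b62} follows by inspection. Setting $u = v$ in \eqref{b17} produces $\frac{(1-\theta^n)|\pmb{\mathfrak{t}}-\pmb{\mathfrak{s}}|}{1-\theta} \leq \frac{|\pmb{\mathfrak{t}}-\pmb{\mathfrak{s}}|}{1-\theta}$, giving the first inequality; setting $\pmb{\mathfrak{s}} = \pmb{\mathfrak{t}}$ collapses the right-hand side to $\theta^n |u-v|$, giving the second. This is a routine induction and I do not anticipate any real obstacle; the only point requiring care is the bookkeeping in the inductive step, where the precise form of the bound — with the $(|u-v| - \frac{|\pmb{\mathfrak{t}}-\pmb{\mathfrak{s}}|}{1-\theta})$ correction — is chosen precisely so that the $|\pmb{\mathfrak{t}}-\pmb{\mathfrak{s}}|$ contribution telescopes into the closed-form coefficient $1/(1-\theta)$.
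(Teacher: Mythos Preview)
Your proof is correct and follows essentially the same approach as the paper. The paper packages the induction by introducing the affine map $\varphi(x)=|\pmb{\mathfrak{t}}-\pmb{\mathfrak{s}}|+\theta x$ and observing that the single-step estimate reads $|f^{\pmb{\mathfrak{t}}}_i(u)-f^{\pmb{\mathfrak{s}}}_i(v)|\leq \varphi(|u-v|)$, so iterating gives $\varphi^n(|u-v|)$, which is then computed in closed form; your explicit induction is the same argument written out step by step.
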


\begin{proof}
  To verify \eqref{b17} we let $i\in \{1,\ldots, \ell\}$. Then
  	\begin{eqnarray*}
|
f^{\pmb{\mathfrak{t}} }_i(u)-f^{\pmb{\mathfrak{s}}}_i(v)
|
&\leq &
|
f^{\pmb{\mathfrak{t}} }_i(u)-f^{\pmb{\mathfrak{s}} }_i(u)
|
+
|
f^{\pmb{\mathfrak{s}} }_i(u)-f^{\pmb{\mathfrak{s}} }_i(v)
|
\\
&= &
|
\mathbf{t}_i-\mathbf{s}_i
|
+
|
f_i(u)-f_i(v)
|
\\
&\leq &
|
\pmb{\mathfrak{t}}-\pmb{\mathfrak{s}}
|
+\theta|u-v|.
\end{eqnarray*}
Let $\varphi:\;\R\to \R$ be a contracting affine  map defined by $\varphi(x)= |
\pmb{\mathfrak{t}}-\pmb{\mathfrak{s}}
| +\theta x$ {\color{red} for given $\ss$ and $\tt$}.  Then for every $1\leq i\leq \ell$,
\begin{equation}\label{b61}
	|
f^{\pmb{\mathfrak{t}} }_i(u)-f^{\pmb{\mathfrak{s}}}_i(v)
|
\leq
\varphi(|u-v|).
\end{equation}

Now we can prove \eqref{b17} by using the above inequality.
Indeed, using \eqref{b61} and the fact that $\varphi(\cdot)$ is monotone increasing, we obtain
that for  $1\leq i,j\leq \ell$,
\[
|
f^{\pmb{\mathfrak{t}} }_j(
f^{\pmb{\mathfrak{t}} }_i(u)
)
-
f^{\pmb{\mathfrak{s}} }_j(
f^{\pmb{\mathfrak{s}} }_i(v)
)
|
\leq
\varphi\left(
  |
  f^{\pmb{\mathfrak{t}} }_{i}(u)
  -
  f^{\pmb{\mathfrak{s}}}_{i}(v) |
  \right)
\leq
\varphi^2(|u-v|).
\]
Successive application of this implies that for every $\pmb{\tau}\in\Sigma_{n}$ and $u,v\in S$,
$$
|
  f^{\pmb{\mathfrak{t}} }_{\pmb{\tau}}(u)
  -
  f^{\pmb{\mathfrak{s}} }_{\pmb{\tau}}(v)
  | \leq
   \varphi^{n}(|u-v|)=\frac{|\pmb{\mathfrak{t}} -\pmb{\mathfrak{s}}|}{1-\theta}
    +
    \theta^{n}
    \left(
    |u-v|-
    \frac{|\pmb{\mathfrak{t}} -\pmb{\mathfrak{s}}|}{1-\theta}
    \right).
$$
This proves \eqref{b17}.
The assertions in \eqref{b62} then follow directly from  \eqref{b17}.
\end{proof}

Now we are ready to prove Proposition \ref{pro-3.2}.
\begin{proof}[Proof of Proposition \ref{pro-3.2}] We divide the proof into 5 small steps.

{\sl Step 1}. Write
\begin{equation}
\label{cn0}
C_n:=\sup\left\{ \left|\frac{(D_yf^{\tt}_{{\pmb{\omega}}} )_{ii}}{(D_zf^{\tt}_{{\pmb{\omega}}} )_{ii}}\right|:\; \tt\in \Delta,\; y,z\in S, \;{\pmb{\omega}}\in \Sigma_n,\; 1\leq i\leq d\right\}.
\end{equation}
We claim that
\begin{equation}
\label{e-cn}
\lim_{n\to \infty}\frac{1}{n}\log C_n=0.
\end{equation}

To prove this claim, for each $p\in \{1,\ldots, \ell\}$ and $i\in \{1,\ldots, d\}$ we define a function $a_{p,i}:\; S\to \R$ by
$$
a_{p,i}(z)=\log \left|(D_zf_p)_{ii}\right|.
$$
Clearly, the functions $a_{p,i}$ are continuous on $S$. Since the matrix $D_zf_p$ is lower triangular for each $z\in S$ and $1\leq p\leq \ell$, it follows that for $\tt\in \Delta$, $y,z\in S$, ${\pmb{\omega}}=\omega_1\cdots\omega_n\in \Sigma_n$ and $1\leq i\leq d$,
\begin{equation}
\label{e-co1}
\log |(D_zf^{\tt}_{{\pmb{\omega}}} )_{ii}|=\sum_{k=1}^n a_{\omega_k,i}(f^{\tt}_{\sigma^k{{\pmb{\omega}}}}(z))
\end{equation}
and
\begin{equation}
\label{e-co}
\log \left|\frac{(D_yf^{\tt}_{{\pmb{\omega}}} )_{ii}}{(D_zf^{\tt}_{{\pmb{\omega}}} )_{ii}}\right|=\sum_{k=1}^n \left(a_{\omega_k,i}(f^{\tt}_{\sigma^k{{\pmb{\omega}}}}(y))-
a_{\omega_k,i}(f^{\tt}_{\sigma^k{{\pmb{\omega}}}}(z))\right),
\end{equation}
where $\sigma^k{{\pmb{\omega}}}:=\omega_{k+1}\cdots \omega_n$ for $1\leq k\leq n-1$,  $\sigma^n{{\pmb{\omega}}}:=\varepsilon$ (here $\varepsilon$ stands for the empty word) and  $f^{\tt}_{\varepsilon}(y):=y$. Define
$\gamma: (0,\infty)\to (0,\infty)$ by
\begin{equation}
\label{e-to7}
\gamma(u)=\max_{1\leq p\leq \ell,\;1\leq i\leq d}\sup \{|a_{p,i}(y)-a_{p,i}(z)|:\; y,z\in S,\; |y-z|\leq u\}.
\end{equation}
Since $S$ is compact and $a_{p,i}$ are continuous, it follows that $\lim_{u\to 0}\gamma(u)=0$.  To estimate the term in the lefthand side of the equality \eqref{e-co}, by Lemma \ref{lem-b20} we obtain that
$$|f^{\tt}_{\sigma^k{{\pmb{\omega}}}}(y)-f^{\tt}_{\sigma^k{{\pmb{\omega}}}}(z)|\leq \theta^{n-k}|y-z|\leq \theta^{n-k}{\rm diam}(S),
$$
 where $\theta\in (0,1)$ is a common Lipschitz constant of $f_1,\ldots, f_\ell$ on $S$.  Hence by \eqref{e-co},
 $$
\frac{1}{n} \log \left|\frac{(D_yf^{\tt}_{{\pmb{\omega}}} )_{ii}}{(D_zf^{\tt}_{{\pmb{\omega}}} )_{ii}}\right|\leq \frac{1}{n} \sum_{k=1}^n\gamma\left(\theta^{n-k}{\rm diam}(S)\right)\to 0, \mbox{ as }n\to \infty.
 $$
 This proves \eqref{e-cn}.

 {\sl Step 2}. For  $\ss,\tt\in \Delta$, $y\in S$, $n\in \N$, ${\pmb{\omega}}\in \Sigma_n$ and $1\leq i\leq d$,
 \begin{equation}
 \label{e-cn1}
 \left|\frac{(D_yf^{\tt}_{{\pmb{\omega}}} )_{ii}}{(D_yf^{\ss}_{{\pmb{\omega}}} )_{ii}}\right|\leq \exp\left(n\gamma\left(\frac{|\tt-\ss|}{1-\theta}\right)\right),
 \end{equation}
 where $\gamma(\cdot)$ is defined as in \eqref{e-to7} and $\theta\in (0,1)$ is a common Lipschitz constant for $f_1,\ldots, f_\ell$ on $S$.

 To prove \eqref{e-cn1}, by \eqref{e-co1} we see that
 \begin{eqnarray*}
 \log \left|\frac{(D_yf^{\tt}_{{\pmb{\omega}}} )_{ii}}{(D_yf^{\ss}_{{\pmb{\omega}}} )_{ii}}\right|&=&\sum_{k=1}^n \left(a_{\omega_k,i}(f^{\tt}_{\sigma^k{{\pmb{\omega}}}}(y))-
a_{\omega_k,i}(f^{\ss}_{\sigma^k{{\pmb{\omega}}}}(y))\right)\\
&\leq & n \gamma \left(\frac{|\tt-\ss|}{1-\theta}\right),
 \end{eqnarray*}
 where in the second inequality we have used the fact that $|f^{\tt}_{\sigma^k{{\pmb{\omega}}}}(y)-f^{\ss}_{\sigma^k{{\pmb{\omega}}}}(y)|\leq \frac{|\tt-\ss|}{1-\theta}$ (which follows from  \eqref{b62}).  This proves \eqref{e-cn1}.

 {\sl Step 3}. Set
 \begin{equation}
 \label{e-cn4}
 c=\sup\left\{\left|\frac{(D_yf_p)_{ij}}{(D_yf_p)_{jj}}\right|:\; y\in S,\; 1\leq p\leq \ell,\; 1\leq i,j\leq d\right\}.
 \end{equation}
 Then
 \begin{equation}
 \label{e-cn5}
 \left|\frac{(D_yf_{\pmb{\omega}}^{\ss})_{ij}}{(D_yf_{\pmb{\omega}}^{\ss})_{jj}} \right|\leq (cn)^d \quad \mbox{ for all }
 \ss\in \Delta,\; y\in S,\; \pmb{\omega}\in \Sigma_n,\; 1\leq i,j\leq d.
 \end{equation}
 To see this, we simply notice that $D_yf_{{\pmb{\omega}}}^{\ss}=\prod_{k=1}^n D_{f^{\ss}_{\sigma^k {\pmb{\omega}}}(y)}f_{\omega_k}$ and apply Lemma \ref{lem-a2}.

 {\sl Step 4}. Let $c$ and $C_n$ be defined as in \eqref{e-cn4} and \eqref{cn0}. Then for  $\tt\in \Delta$, $y, z_1,\ldots, z_d\in S$, $\pmb{\omega}\in \Sigma_n$ and $1\leq k,j\leq d$,
 \begin{equation}
 \label{e-cn6}
 \left|\left(\left(D_{z_1,\ldots, z_d}^*f^{\tt}_{{\pmb{\omega}}}\right)^{-1}\right)_{kj}\right|
 \leq (cn)^{d(d-1)}(\sqrt{d})^{d-1}(C_n)^d
 \frac{1}{\left|\left(D_yf^{\tt}_{{\pmb{\omega}}}\right)_{kk}\right|},
 \end{equation}
 where $D_{z_1,\ldots, z_d}^*g$ is defined as in \eqref{e-to5}.

 To prove \eqref{e-cn6}, notice that for all $1\leq k,j\leq d$,
 \begin{eqnarray*}
 \left|\left(D_{z_1,\ldots, z_d}^*f^{\tt}_{{\pmb{\omega}}}\right)_{kj}\right|&=& \left|\left(D_{z_k}f^{\tt}_{{\pmb{\omega}}}\right)_{kj}\right| \qquad \mbox{(by \eqref{e-to6})}\\
 &\leq& (cn)^d  \left|\left(D_{z_k}f^{\tt}_{{\pmb{\omega}}}\right)_{jj}\right| \qquad \mbox{(by \eqref{e-cn5})}\\
 &\leq& (cn)^d C_n \left|\left(D_{z_j}f^{\tt}_{{\pmb{\omega}}}\right)_{jj}\right| \qquad \mbox{(by \eqref{cn0})}\\
 &=& (cn)^d C_n \left|\left(D_{z_1,\ldots, z_d}^*f^{\tt}_{{\pmb{\omega}}}\right)_{jj}\right|\qquad \mbox{(by \eqref{e-to6})}.
 \end{eqnarray*}
 Applying Lemma \ref{lem-5.1} (in which we replace $c$ by $(cn)^dC_n$ and take $A=D_{z_1,\ldots, z_d}^*f^{\tt}_{{\pmb{\omega}}}$), we obtain
 \begin{equation*}
 \begin{split}
\left|\left(\left(D_{z_1,\ldots, z_d}^*f^{\tt}_{{\pmb{\omega}}}\right)^{-1}\right)_{kj}\right|&\leq \left(cn)^dC_n\sqrt{d}\right)^{d-1}\left|\left(\left(D_{z_1,\ldots, z_d}^*f^{\tt}_{{\pmb{\omega}}}\right)^{-1}\right)_{kk}\right|\\
& =\left((cn)^dC_n\sqrt{d}\right)^{d-1}\left|\left(\left(D_{z_k}f^{\tt}_{{\pmb{\omega}}}\right)^{-1}\right)_{kk}\right|\\
& =\left((cn)^dC_n\sqrt{d}\right)^{d-1}\frac{1}{\left|\left(D_{z_k}f^{\tt}_{{\pmb{\omega}}}\right)_{kk}\right|}\\
&\leq \left((cn)^dC_n\sqrt{d}\right)^{d-1}C_n\frac{1}{\left|\left(D_{y}f^{\tt}_{{\pmb{\omega}}}\right)_{kk}\right|},
 \end{split}
 \end{equation*}
 from which \eqref{e-cn6} follows.

 {\sl Step 5}. Now we are ready to prove \eqref{e-e3.2}. Let $\delta\in (0,r_0)$. Write $$u_n:=(cn)^{d(d-1)}(\sqrt{d})^{d-1}(C_n)^d,\qquad n\in \N.$$
   Then for $\ss,\tt\in \Delta$ with $|\tt-\ss|\leq \delta$ and
 $1\leq i,j\leq d$,
 \begin{eqnarray*}
\left| \left(D_y f_{\pmb{\omega}}^{\pmb{\mathfrak{s}}}\cdot(D^*_{z_1,\ldots, z_d}  f_{\pmb{\omega}}^{\pmb{\mathfrak{t}}})^{-1}\right)_{ij}\right|
&\leq & \sum_{k=1}^d\left| \left(D_y f_{\pmb{\omega}}^{\pmb{\mathfrak{s}}}\right)_{ik}\right|\cdot \left| \left(D^*_{z_1,\ldots, z_d}  f_{\pmb{\omega}}^{\pmb{\mathfrak{t}}})^{-1}\right)_{kj} \right|\\
&\leq & (cn)^du_n\sum_{k=1}^d\left| \frac{\left(D_y f_{\pmb{\omega}}^{\pmb{\mathfrak{s}}}\right)_{kk}}{\left(D_y f_{\pmb{\omega}}^{\pmb{\mathfrak{t}}}\right)_{kk}}\right|\quad \mbox{ (by \eqref{e-cn5}, \eqref{e-cn6})}\\
&\leq &d(cn)^du_n\exp\left(n \gamma \left(\frac{|\tt-\ss|}{1-\theta}\right)\right)\quad \mbox{ (by \eqref{e-cn1})}\\
&\leq &d(cn)^du_n\exp\left(n \gamma \left(\frac{\delta}{1-\theta}\right)\right).
 \end{eqnarray*}
 {\color{red} This} implies that
\begin{equation}
\label{e-cn7}
 \left\| D_y f_{\pmb{\omega}}^{\pmb{\mathfrak{s}}}\cdot(D^*_{z_1,\ldots, z_d}  f_{\pmb{\omega}}^{\pmb{\mathfrak{t}}})^{-1}\right\|\leq d^2(cn)^du_n\exp\left(n \gamma \left(\frac{\delta}{1-\theta}\right)\right),
\end{equation}
where we have used an easily checked fact that $$\|A\|\leq d\max_{1\leq i,j\leq d}|A_{ij}|$$ for $A=(A_{ij})\in \R^{d\times d}$.

 Set $h:(0,r_0)\to (0,\infty)$ by $h(x)=x+ \gamma \left(\frac{x}{1-\theta}\right)$. Since
 $$
 \lim_{n\to \infty}\frac{1}{n}\log \left(d^2(cn)^du_n\right)=0,
 $$
 there exists  $C(\delta)>0$ such that $d^2(cn)^du_ne^{-n\delta}\leq C(\delta)$ for all $n\geq 1$. According to this fact and \eqref{e-cn7},
 we obtain  the desired inequality  $$ \left\| D_y f_{\pmb{\omega}}^{\pmb{\mathfrak{s}}}\cdot(D^*_{z_1,\ldots, z_d}  f_{\pmb{\omega}}^{\pmb{\mathfrak{t}}})^{-1}\right\|\leq C(\delta) \exp(nh(\delta));$$  for later convenience we may assume that $C(\delta)\geq 1$.
\end{proof}

\subsection{Proof of Theorem \ref{thm-3.3}}

The following result plays a key part in our proof.
\begin{lem}
\label{lem-key}
Let $\{\mathcal F^{\tt}\}_{\tt\in \Delta}$ be a translational family of IFSs on a compact set $S\subset \R^d$ generated by a $C^1$ IFS $\mathcal F=\{f_i\}_{i=1}^\ell$. Suppose that \eqref{e-con} holds.
Let $\delta>0$. Then there exists $\widetilde{C}>0$ which depends on $\mathcal F$ and $\delta$ such that the following holds.
Let $\ba=(a_n)_{n=1}^\infty,\bb=(b_n)_{n=1}^\infty\in \Sigma$ with $a_1\neq b_1$, and let $A$ be a real invertible $d\times d$ matrix.  Then for  $\ss\in \Delta$ and $r>0$,
\begin{equation}
\label{e-}
\begin{split}
\mathcal L_{\ell d}&\left\{\tt\in B_{\R^{\ell d}}(\ss, \delta)\cap \Delta:\; \Pi^{\tt}(\ba)-\Pi^{\tt}(\bb)\in A^{-1}B_{\R^d}(0,r)\right\} \\
&\leq \widetilde{C}\min\left\{\frac{r^k}{\phi^k(A)}:\; k=0,1,\ldots,d\right\},
\end{split}
\end{equation}
where $B_{\R^{\ell d}}(\cdot,\cdot)$ and $B_{\R^d}(\cdot,\cdot)$ stand for closed balls in $\R^{\ell d}$ and $\R^{d}$, repectively.
\end{lem}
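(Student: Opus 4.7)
The plan is to bound the given measure via a change of variables that exploits the asymmetry $a_1\neq b_1$ in the first symbols, and then to estimate the volume of the ellipsoid $A^{-1}B_{\R^d}(0,r)\subset\R^d$. Writing $\Phi(\tt):=\Pi^{\tt}(\ba)-\Pi^{\tt}(\bb)$, the fixed-point identities $\Pi^{\tt}(\ba)=f_{a_1}(\Pi^{\tt}(\sigma\ba))+\mathbf{t}_{a_1}$ and $\Pi^{\tt}(\bb)=f_{b_1}(\Pi^{\tt}(\sigma\bb))+\mathbf{t}_{b_1}$ give
$$\Phi(\tt)=(\mathbf{t}_{a_1}-\mathbf{t}_{b_1})+\bigl[f_{a_1}(\Pi^{\tt}(\sigma\ba))-f_{b_1}(\Pi^{\tt}(\sigma\bb))\bigr],$$
exhibiting $\Phi$ as a nonlinear perturbation of the projection $\tt\mapsto\mathbf{t}_{a_1}-\mathbf{t}_{b_1}$. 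Implicitly differentiating and iterating the recursion yields
$$D_{\mathbf{t}_{a_1}}\Phi(\tt)=I+\sum_{k\geq 2,\,a_k=a_1}Df_{a_1}Df_{a_2}\cdots Df_{a_{k-1}}-\sum_{k\geq 2,\,b_k=a_1}Df_{b_1}Df_{b_2}\cdots Df_{b_{k-1}},$$
with derivatives evaluated at the appropriate attractor points; under the dominated lower triangular hypothesis each summand is itself lower triangular.

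The central step is to prove that $D_{\mathbf{t}_{a_1}}\Phi(\tt)$ is invertible with $\|(D_{\mathbf{t}_{a_1}}\Phi(\tt))^{-1}\|\leq M=M(\mathcal F,\delta)$ uniformly over $\tt\in B(\ss,\delta)\cap\Delta$. Since $D_{\mathbf{t}_{a_1}}\Phi(\tt)$ is lower triangular, this reduces to a uniform lower bound on its diagonal entries. Factoring $(Df_{a_1})_{ii}$ out of the first sum and $(Df_{b_1})_{ii}$ out of the second, each diagonal entry acquires the form $1+(Df_{a_1})_{ii}\tilde A_i-(Df_{b_1})_{ii}\tilde B_i$ for scalar tails $\tilde A_i,\tilde B_i$ that can be resummed as geometric series; the separation $\rho_{a_1}+\rho_{b_1}\leq\rho<1$ supplied by \eqref{e-con}, together with a careful analysis of these scalar series (guided by the model cases $\ba=a_1a_1\ldots$ and $\bb=b_1a_1a_1\ldots$), yields the required bound. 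Lemma~\ref{lem-a2} and Lemma~\ref{lem-5.1} then control the off-diagonal entries of $D_{\mathbf{t}_{a_1}}\Phi(\tt)^{-1}$, and Proposition~\ref{pro-3.2} ensures uniformity of all these bounds as $\tt$ ranges over $B(\ss,\delta)$.

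Once uniform invertibility is established, the implicit function theorem makes the map $\tt\mapsto(\Phi(\tt),\tilde\tt)$, with $\tilde\tt=(\mathbf{t}_j)_{j\neq a_1}\in\R^{(\ell-1)d}$, a diffeomorphism of $B(\ss,\delta)\cap\Delta$ onto its image with Jacobian determinant uniformly bounded. Pushing the measure forward gives
$$\mathcal L_{\ell d}\bigl\{\tt\in B(\ss,\delta)\cap\Delta:\Phi(\tt)\in A^{-1}B_{\R^d}(0,r)\bigr\}\leq C\,\mathcal L_d\bigl(A^{-1}B_{\R^d}(0,r)\cap W\bigr)$$
for some bounded set $W\subset\R^d$ depending on $\mathcal F,\ss,\delta$. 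Writing the singular value decomposition $A=U\Sigma V^{T}$ with $\Sigma=\mathrm{diag}(\alpha_1,\ldots,\alpha_d)$, the ellipsoid $A^{-1}B_{\R^d}(0,r)$ is contained in the box $\prod_{i=1}^d[-r/\alpha_i,r/\alpha_i]$, so for every $k\in\{0,1,\ldots,d\}$,
$$\mathcal L_d\bigl(A^{-1}B_{\R^d}(0,r)\cap W\bigr)\leq \prod_{i=1}^d\min(2r/\alpha_i,\mathrm{diam}(W))\leq 2^d\,\mathrm{diam}(W)^{d-k}\,\frac{r^k}{\phi^k(A)}.$$
Taking the minimum over $k$ and absorbing all constants into $\widetilde C=\widetilde C(\mathcal F,\delta)$ yields the asserted estimate.

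The main obstacle is the uniform invertibility of $D_{\mathbf{t}_{a_1}}\Phi(\tt)$ under only hypothesis \eqref{e-con}. A direct triangle-inequality bound gives $\|D_{\mathbf{t}_{a_1}}\Phi(\tt)-I\|\leq \rho/(1-\theta)$, which can exceed $1$; thus invertibility cannot be read off from a Neumann-series perturbation of $I$. The dominated lower triangular hypothesis is essential—it reduces the question to a scalar estimate on each diagonal entry, where the special position of $a_1$ as the first letter of $\ba$ and of $b_1$ as the first letter of $\bb$ in the respective geometric series allows one to extract the separation $\rho_{a_1}+\rho_{b_1}<1$ cleanly. All subsequent steps are standard, and the length of the common prefix $|\ba\wedge\bb|$ plays no role here since we are in the base case $\ba\wedge\bb=\varepsilon$.
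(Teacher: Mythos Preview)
Your overall architecture matches the paper's: define $\Phi(\tt)=\Pi^{\tt}(\ba)-\Pi^{\tt}(\bb)$, show that a partial Jacobian is uniformly invertible, change variables, and bound the volume of the truncated ellipsoid (your last display is exactly Lemma~\ref{lem-geom}). But the central invertibility step has a genuine gap.

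First, Lemma~\ref{lem-key} does \emph{not} assume the dominated lower triangular hypothesis. It is stated for an arbitrary $C^1$ IFS satisfying only \eqref{e-con}, and it is reused verbatim in Section~\ref{S-conformal} for the conformal case. So you cannot appeal to triangularity of $D_{\mathbf{t}_{a_1}}\Phi(\tt)$, nor invoke Lemmas~\ref{lem-5.1}, \ref{lem-a2} or Proposition~\ref{pro-3.2} here; those belong to a different part of the argument.

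Second, even granting triangularity, your proposed workaround does not resolve the obstacle you correctly flagged. The $i$-th diagonal entry of $D_{\mathbf{t}_{a_1}}\Phi(\tt)-I$ is a signed series of scalar products of exactly the same shape as the matrix series, and the triangle inequality yields the same bound; the signs of $(D_zf_j)_{ii}$ are unconstrained, so there is no cancellation to exploit, and your ``careful analysis guided by model cases'' is not a proof. The paper's actual idea (attributed to Solomyak) is different: \emph{do not insist on differentiating in $\mathbf{t}_{a_1}$}. For $k\in\{a_1,b_1\}$ let $\lambda_k$ denote the triangle-inequality bound on $\|E_k(\tt)\|:=\|D_{\mathbf{t}_k}\Phi(\tt)\mp I\|$. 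A telescoping computation gives
\[
\lambda_{a_1}(1-\rho_{a_1})+\lambda_{b_1}(1-\rho_{b_1})\;\leq\;\rho_{a_1}+\rho_{b_1}\;\leq\;\rho\;<\;1,
\]
which forces $\min(\lambda_{a_1},\lambda_{b_1})<\rho$, since otherwise the left-hand side would be at least $\rho(2-\rho_{a_1}-\rho_{b_1})>\rho$. Choosing $k^*\in\{a_1,b_1\}$ to realise this minimum, one has $\|D_{\mathbf{t}_{k^*}}\Phi(\tt)\mp I\|<\rho$ uniformly in $\tt$, and the Neumann series yields $\|(D_{\mathbf{t}_{k^*}}\Phi(\tt))^{-1}\|\leq(1-\rho)^{-1}$ with no structural hypothesis on the derivatives. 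This is the missing ingredient.

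A smaller point: the inverse function theorem gives only \emph{local} injectivity. The paper obtains global injectivity of $\tt\mapsto(\Phi(\tt),\tilde\tt)$ on the convex ball directly from the uniform bound $\|D_{\mathbf{t}_{k^*}}\Phi\mp I\|<\rho<1$ via a Lipschitz estimate.
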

Since the proof of the above lemma is a  little long, we will postpone it until we have finished the proof of  Theorem \ref{thm-3.3}.

\begin{proof}[Proof of Theorem \ref{thm-3.3} by assuming Lemma \ref{lem-key}] Fix $\ss\in \Delta$ and $\delta\in (0,r_0)$. Let $\bi,\bj\in \Sigma$ with $\bi\neq \bj$. Set $$\pmb{\omega}=\bi\wedge \bj\quad \mbox{ and }\quad n=|\pmb{\omega}|.$$
Write $\ba=\sigma^n \bi$ and $\bb=\sigma^n \bj$. Clearly $a_1\neq b_1$.

Fix $y\in S$. We claim that for $r>0$,
\begin{equation}
\label{e-split}
\begin{split}
&\left\{\tt \in B_{\R^{\ell d}}(\ss,\delta)\cap \Delta:\; |\Pi^{\tt}(\bi)-\Pi^{\tt}(\bj)|< r\right\}\\
&\mbox{}\; \subset
\left\{\tt\in B_{\R^{\ell d}}(\ss,\delta)\cap \Delta:\; \Pi^{\tt}(\ba)-\Pi^{\tt}(\bb)\in \left(D_yf^{\ss}_{\pmb{\omega}}\right)^{-1}
B_{\R^d}\left(0,C(\delta)e^{nh(\delta)}r\right) \right\},
\end{split}
\end{equation}
where $C(\delta)$ and $h(\delta)$ are given as in Proposition \ref{pro-3.2}.

To show \eqref{e-split}, let $\tt \in B_{\R^{\ell d}}(\ss,\delta)\cap \Delta$ so that $|\Pi^{\tt}(\bi)-\Pi^{\tt}(\bj)|<r$. Notice that
$$\Pi^{\tt}(\bi)-\Pi^{\tt}(\bj)=f^{\tt}_{\pmb{\omega}}(\Pi^{\tt}(\ba))-f^{\tt}_{\pmb{\omega}}(\Pi^{\tt}(\bb)).$$
Since $S$ is convex, by the mean value theorem there exist $z_1,\ldots, z_d\in S$ such that
$$\Pi^{\tt}(\bi)-\Pi^{\tt}(\bj)=\left(D_{z_1,\ldots, z_d}^*f^{\tt}_{\pmb{\omega}}\right)(\Pi^{\tt}(\ba)-\Pi^{\tt}(\bb)).
$$
Hence
\begin{eqnarray*}
\Pi^{\tt}(\ba)-\Pi^{\tt}(\bb)&=&\left(D_{z_1,\ldots, z_d}^*f^{\tt}_{\pmb{\omega}}\right)^{-1}(\Pi^{\tt}(\bi)-\Pi^{\tt}(\bj))\\
&\in& \left(D_{z_1,\ldots, z_d}^*f^{\tt}_{\pmb{\omega}}\right)^{-1}B_{\R^d}(0,r)\\
&=& \left(D_yf^{\ss}_{\pmb{\omega}}\right)^{-1} D_yf^{\ss}_{\pmb{\omega}} \left(D_{z_1,\ldots, z_d}^*f^{\tt}_{\pmb{\omega}}\right)^{-1}B_{\R^d}(0,r)\\
&\subset & \left(D_yf^{\ss}_{\pmb{\omega}}\right)^{-1} B_{\R^d}\left(0,C(\delta)e^{nh(\delta)}r\right)\quad  \mbox{ (by Proposition \ref{pro-3.2}).}
\end{eqnarray*}
This proves \eqref{e-split}.

 By \eqref{e-split} and Lemma \ref{lem-key}, we see that
\begin{equation*}
\begin{split}
\mathcal L_{\ell d}&\left\{\tt \in B_{\R^{\ell d}}(\ss,\delta)\cap \Delta:\; |\Pi^{\tt}(\bi)-\Pi^{\tt}(\bj)|< r\right\}\\
&\leq \mathcal L_{\ell d} \left\{\tt\in B_{\R^{\ell d}}(\ss,\delta)\cap \Delta:\; \Pi^{\tt}(\ba)-\Pi^{\tt}(\bb)\in \left(D_yf^{\ss}_{\pmb{\omega}}\right)^{-1}
B_{\R^d}\left(0,C(\delta)e^{nh(\delta)}r\right) \right\}\\
&\leq \widetilde{C}\cdot\min\left\{\frac{C(\delta)^ke^{nkh(\delta)}r^k}{\phi^k(D_yf^{\ss}_{\pmb{\omega}})}:\; k=0,1,\ldots,d\right\}\\
&\leq \widetilde{C}C(\delta)^de^{ndh(\delta)}\min\left\{\frac{r^k}{\phi^k(D_yf^{\ss}_{\pmb{\omega}})}:\; k=0,1,\ldots,d\right\}.
\end{split}
\end{equation*}
 Since $y\in S$ is arbitrary and $\Pi^{\ss}(\Sigma)\subset S$, recalling  \begin{equation*}
Z_{{\pmb{\omega}}}^{\ss}(r)=
\inf_{x\in \Sigma} \min\left \{ \frac{r^k}{ \phi^k (D_{\Pi^{\ss} x}f^{\ss}_{{\pmb{\omega}}} ) }:\; k=0, 1,\ldots, d\right\},
\end{equation*}
 it follows that
$$
\mathcal L_{\ell d}\left\{\tt \in B_{\R^{\ell d}}(\ss,\delta)\cap \Delta:\; |\Pi^{\tt}(\bi)-\Pi^{\tt}(\bj)|< r\right\}
\leq \widetilde{C}C(\delta)^de^{ndh(\delta)}Z_{\pmb{\omega}}^{\ss}(r).
$$
This completes the proof of the theorem by letting $c_\delta=\widetilde{C}C(\delta)^d$ and $\psi(\delta)=dh(\delta)$.
\end{proof}

In what follows we prove Lemma \ref{lem-key}.  To this end, we first prove  an elementary geometric lemma.
\begin{lem}
\label{lem-geom}
Let $A$ be a real invertible $d\times d$ matrix. Then for $r_1,r_2>0$,
$$
\mathcal L_{d}\left(\left(A^{-1}B_{\R^d}(0,r_1)\right)\cap B_{\R^d}(0,r_2)\right)\leq 2^d \min\left\{ \frac{r_1^kr_2^{d-k}}{\phi^k(A)}:\; k=0,1,\ldots, d\right\},
$$
where $\phi^s(\cdot)$ is the singular value function defined as in \eqref{e-singular}.
\end{lem}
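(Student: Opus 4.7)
\smallskip
\noindent\textbf{Proof plan.} The plan is to reduce to the diagonal case via the singular value decomposition and then bound everything by axis-aligned boxes.

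First, write $A = U \Sigma V^T$ where $U, V \in O(d)$ and $\Sigma = \mathrm{diag}(\alpha_1, \ldots, \alpha_d)$ with $\alpha_1 \geq \alpha_2 \geq \cdots \geq \alpha_d > 0$ being the singular values of $A$. Since Euclidean balls are rotationally invariant, $A^{-1} B_{\R^d}(0, r_1) = V \Sigma^{-1} U^T B_{\R^d}(0, r_1) = V\bigl(\Sigma^{-1} B_{\R^d}(0, r_1)\bigr)$, and similarly $B_{\R^d}(0, r_2) = V B_{\R^d}(0, r_2)$. Using that $V$ preserves Lebesgue measure, I obtain
\[
\mathcal L_d\bigl((A^{-1} B_{\R^d}(0, r_1)) \cap B_{\R^d}(0, r_2)\bigr) = \mathcal L_d\bigl((\Sigma^{-1} B_{\R^d}(0, r_1)) \cap B_{\R^d}(0, r_2)\bigr),
\]
which reduces the problem to the case of the diagonal matrix $\Sigma$.

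Next, I enclose each ball by the smallest axis-aligned box containing it: $B_{\R^d}(0, r_2) \subset [-r_2, r_2]^d$ and $B_{\R^d}(0, r_1) \subset [-r_1, r_1]^d$, which gives $\Sigma^{-1} B_{\R^d}(0, r_1) \subset \prod_{i=1}^d [-r_1/\alpha_i, r_1/\alpha_i]$. Intersecting these two boxes coordinate-wise yields
\[
\mathcal L_d\bigl((\Sigma^{-1} B_{\R^d}(0, r_1)) \cap B_{\R^d}(0, r_2)\bigr) \leq 2^d \prod_{i=1}^d \min\!\left\{\frac{r_1}{\alpha_i},\, r_2\right\}.
\]

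Finally, for each $k \in \{0, 1, \ldots, d\}$ I bound the product by choosing $r_1/\alpha_i$ in the first $k$ coordinates and $r_2$ in the remaining $d - k$ coordinates, which gives
\[
\prod_{i=1}^d \min\!\left\{\frac{r_1}{\alpha_i},\, r_2\right\} \leq \prod_{i=1}^k \frac{r_1}{\alpha_i} \cdot r_2^{d-k} = \frac{r_1^k r_2^{d-k}}{\alpha_1 \cdots \alpha_k} = \frac{r_1^k r_2^{d-k}}{\phi^k(A)},
\]
where in the last equality I use that $\phi^k(A) = \alpha_1(A) \cdots \alpha_k(A)$ for integer $0 \leq k \leq d$ (with $\phi^0 \equiv 1$ and $\phi^d(A) = |\det A| = \alpha_1 \cdots \alpha_d$), and that $\phi^k$ is invariant under left- and right-multiplication by orthogonal matrices. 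Taking the minimum over $k$ gives the claim. The argument is essentially elementary, so there is no genuine obstacle; the only point requiring a touch of care is matching the integer values of the singular value function with the product of the top $k$ singular values across the two cases $k < d$ and $k = d$ in the definition \eqref{e-singular}.
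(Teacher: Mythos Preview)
Your proof is correct and follows essentially the same approach as the paper. The paper's proof is more terse: it simply asserts that the intersection is contained in a rectangular parallelepiped with sides $2\min\{r_1/\alpha_i, r_2\}$, whereas you spell out the singular value decomposition reduction explicitly; the subsequent volume bound and minimization over $k$ are identical.
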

\begin{proof}
Let $\alpha_1\geq \cdots\geq\alpha_d$ be the singular values of $A$. Clearly the set $$\left(A^{-1}B_{\R^d}(0,r_1)\right)\cap B_{\R^d}(0,r_2)$$ is contained in a rectangular parallelepiped with sides $2\min\{r_1/\alpha_i, r_2\}$, $i=1,\ldots, d$. It follows that
\begin{eqnarray*}
\mathcal L_d\left(\left(A^{-1}B_{\R^d}(0,r_1)\right)\cap B_{\R^d}(0,r_2)\right)&\leq& 2^d\prod_{i=1}^d\min\left\{\frac{r_1}{\alpha_i},\; r_2\right\}\\
&=& 2^d \min\left\{\frac{r_1^kr_2^{d-k}}{\alpha_1\ldots \alpha_k}:\; k=0,1,\ldots, d\right\}\\
&=& 2^d \min\left\{\frac{r_1^kr_2^{d-k}}{\phi^k(A)}:\; k=0,1,\ldots, d\right\}.
\end{eqnarray*}
\end{proof}

\begin{proof}[Proof of Lemma \ref{lem-key}]

Let $\ba=(a_n)_{n=1}^\infty,\bb=(b_n)_{n=1}^\infty\in \Sigma$ with $a_1\neq b_1$. Without loss of generality we assume that
\begin{equation}
\label{e-assump}
 a_1=1 \quad \mbox{ and } \quad b_1=2.
\end{equation}

Define $g:\Delta\to \R^d$ by
$$
g(\tt)=\Pi^{\tt}(\ba)-\Pi^{\tt}(\bb).
$$
 Recall that we have used the notation
 $
\pmb{\mathfrak{t}} =(\mathbf{t}_1, \dots ,\mathbf{t}_\ell )\in \Delta\subset \mathbb{R}^{\ell d }$  with  $$\mathbf{t}_k=(t_{k,1},\ldots, t_{k,d})\in\mathbb{R}^d \quad \mbox{ for all $1\leq k\leq \ell$}.
 $$
 For  $\pmb{\mathfrak{t}} =(\mathbf{t}_1, \dots ,\mathbf{t}_\ell )\in \Delta$ and $k\in \{1,\ldots, \ell\}$, let $\frac{\partial g}{\partial\mathbf{t} _k}(\pmb{\mathfrak{t}} )$ denote  the Jacobian matrix of the following map  from $\R^d$ to $\R^d$:
\[(t_{k,1},\ldots, t_{k, d})\mapsto g(\mathbf{t}_1,\ldots,\mathbf{t}_{k-1}, t_{k,1}, \ldots, t_{k,d}, \mathbf{t}_{k+1}, \ldots,\mathbf{t}_{\ell }).\]

Write $\mathbf{I}=\mathbf{I}_{d}:={\rm diag}(\underbrace{1,\ldots,1}_{d})$. First observe that
for every $n\in \N$ and $\mathbf{i}=(i_k)_{k=1}^\infty\in \Sigma$,
\begin{equation}\label{c78}
   \Pi^{\pmb{\mathfrak{t}} }(\mathbf{i})=
   \mathbf{t}_{i_1}+f_{i_1}
   \left(\mathbf{t}_{i_2}+f_{i_2}
   \left( \mathbf{t}_{i_3}+f_{i_3} \left(
   \dots
   f_{i_{n-1}}\left(\mathbf{t}_{i_n}+f_{i_n}\left(\Pi^{\pmb{\mathfrak{t}} }\sigma^n\mathbf{i}\right)\dots \right)\right)\right)\right).
    \end{equation}
It follows that for $k\in \{1,\ldots, \ell\}$,
\begin{equation}
\label{e-W2}
\begin{split}
% \nonumber to remove numbering (before each equation)
 \frac{\partial\Pi^{\pmb{\mathfrak{t}}}
 (\mathbf{a})}{\partial\mathbf{t}_{k}}\!  &=\!
\delta_{k, a_1} \cdot \mathbf{I}+(D_{\Pi^{\pmb{\mathfrak{t}}}(\sigma\mathbf{a})}f_{a_1}^{\pmb{\mathfrak{t}}})
\times
   \\
   & \left[\mathbf{I}\cdot\delta_{k, a_2}+(D_{\Pi^{\pmb{\mathfrak{t}}}
   (\sigma^2\mathbf{a})} f_{a_2}^{\pmb{\mathfrak{t}}})[\mathbf{I}\cdot\delta_{k, a_3}+ (D_{\Pi^{\pmb{\mathfrak{t}}}(\sigma^3\mathbf{a})}f_{a_3}^{\pmb{\mathfrak{t}}})
      [\mathbf{I}\cdot\delta_{k, a_4}+\cdots] ]\right]
   \\
   &=
\delta_{k, a_1} \cdot \mathbf{I}+
\sum\limits_{n \geq 1 \atop a_{n+1}=k}\
\prod_{k=1}^{n}D_{\Pi^{\pmb{\mathfrak{t}} }(\sigma^k\mathbf{a})}f_{a_k},
\end{split}
\end{equation}
where $\delta_{i,j}=1$ if $i=j$ and $0$ otherwise.

By \eqref{e-W2} and  the assumption \eqref{e-assump},
we  see that  for $k\in \{1,\ldots, \ell\}$,
\begin{eqnarray}\label{Wa52}
 \frac{\partial g}{{\partial\mathbf{t}_{k}}}(\pmb{\mathfrak{t}} )\!&=&\!
\frac{\partial \Pi^{\pmb{\mathfrak{t}}}(\mathbf{a})}{\partial\pmb{\mathfrak{t}} _{k}}
-
\frac{\partial \Pi^{\pmb{\mathfrak{t}}}(\mathbf{b})}{\partial\pmb{\mathfrak{t}} _{k}}=
\delta_{k, 1} \cdot \mathbf{I}-\delta_{k, 2} \cdot \mathbf{I}
\!+\!
E_{k}(\pmb{\mathfrak{t}} ) ,
\end{eqnarray}
where
\begin{equation}\label{Wa50}
 E_{k}(\pmb{\mathfrak{t}} ) = \sum\limits_{n \geq 1 \atop a_{n+1}=k}\
\prod_{i=1}^{n}D_{\Pi^{\pmb{\mathfrak{t}} }(\sigma^i\mathbf{a})}f_{a_i}
-
\sum\limits_{n \geq 1 \atop b_{n+1}=k}\
\prod_{i=1}^{n}D_{\Pi^{\pmb{\mathfrak{t}} }(\sigma^i\mathbf{b})}f_{b_i}.
\end{equation}

Recall that   $\rho=\max\limits_{i\neq j}\left(\rho_i+\rho_j\right)<1$ with
  $\rho_i:=\max\limits_{z\in S}\|D_zf_i\|$.

\begin{lem}\label{Wa48} There exists $k^*=k^*(\ba,\bb)\in \{1,2\}$ such that   $\|E_{k^*}(\pmb{\mathfrak{t}} )\| < \rho$ for all $\pmb{\mathfrak{t}} \in \Delta$.
\end{lem}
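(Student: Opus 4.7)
The plan is to reduce the $\tt$-dependence via the triangle inequality
\[
\|E_k(\pmb{\mathfrak{t}})\|\leq\|S_k^{\ba}(\pmb{\mathfrak{t}})\|+\|S_k^{\bb}(\pmb{\mathfrak{t}})\|\leq\widetilde S_k^{\ba}+\widetilde S_k^{\bb},
\]
where $S_k^{\ba}(\pmb{\mathfrak{t}}),S_k^{\bb}(\pmb{\mathfrak{t}})$ denote the two sums appearing on the right of \eqref{Wa50}, and $\widetilde S_k^{\mathbf{c}}:=\sum_{n\geq 1,\,c_{n+1}=k}\rho_{c_1}\cdots\rho_{c_n}$ is manifestly independent of $\pmb{\mathfrak{t}}$. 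The problem then becomes the purely scalar one of exhibiting $k^{*}\in\{1,2\}$ with $\widetilde S_{k^{*}}^{\ba}+\widetilde S_{k^{*}}^{\bb}<\rho$.

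Using $a_1=1$ and $b_1=2$, I would factor out the first factors $\rho_1,\rho_2$ and work with $f_k(\mathbf{c}):=\sum_{n\geq 0,\,c_{n+1}=k}\rho_{c_1}\cdots\rho_{c_n}$, which satisfies the recursion $f_k(\mathbf{c})=[c_1=k]+\rho_{c_1}\,f_k(\sigma\mathbf{c})$. A standard geometric-series argument (taking the supremum recursively) gives
\[
f_1(\mathbf{c})+f_2(\mathbf{c})\leq \frac{1}{1-\rho^{*}},\qquad \rho^{*}:=\max(\rho_1,\rho_2),
\]
so that summing over $k\in\{1,2\}$ yields $\sum_{k=1}^{2}(\widetilde S_k^{\ba}+\widetilde S_k^{\bb})\leq(\rho_1+\rho_2)/(1-\rho^{*})\leq\rho/(1-\rho^{*})$. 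Taking $k^{*}$ to be the minimizer and using the pigeonhole bound $\min\leq\tfrac{1}{2}\mathrm{sum}$ gives $\widetilde S_{k^{*}}^{\ba}+\widetilde S_{k^{*}}^{\bb}\leq \rho/(2(1-\rho^{*}))$, which is strictly less than $\rho$ as soon as $\rho^{*}<1/2$. This disposes of the ``easy'' regime.

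The main obstacle is the complementary regime $\rho^{*}\geq 1/2$. Assuming WLOG $\rho_1=\rho^{*}$, the condition $\rho<1$ forces $\rho_2\leq 1-\rho_1\leq 1/2$, so the asymmetry between $\rho_1$ and $\rho_2$ can be exploited. The naive averaging breaks down because a single summand $\widetilde S_1^{\ba}$ can be as large as $\rho_1/(1-\rho_1)\gg\rho$ when $\sigma\ba$ is close to the constant sequence $\overline{1}$. The key point is that $f_1(\mathbf{c})$ and $f_2(\mathbf{c})$ are strongly anti-correlated: saturating $f_1(\mathbf{c})$ near its maximum $1/(1-\rho_1)$ forces the symbols $c_i$ to be almost all equal to $1$, which in turn makes $f_2(\mathbf{c})$ small, and symmetrically. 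A structural case analysis on the leading symbols of $\sigma\ba$ and $\sigma\bb$ then lets one pick $k^{*}$: if both $\sigma\ba$ and $\sigma\bb$ have a long initial run of $1$'s then $\widetilde S_2^{\ba},\widetilde S_2^{\bb}$ are uniformly small and one takes $k^{*}=2$; otherwise the sharp bound on $f_1$ is not saturated and a refined weighted average picks out $k^{*}=1$. In either case one gets the strict inequality $\|E_{k^{*}}(\pmb{\mathfrak{t}})\|<\rho$ uniformly in $\pmb{\mathfrak{t}}\in\Delta$.

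The delicate step is the regime $\rho^{*}\geq 1/2$: the triangle-inequality reduction alone is insufficient and one must genuinely use the anti-correlation $f_1+f_2\leq 1/(1-\rho^{*})$ together with the asymmetry $\rho_1+\rho_2<1$ to squeeze out a strict, $\pmb{\mathfrak{t}}$-uniform upper bound below $\rho$ for one of the two candidates $k^{*}\in\{1,2\}$.
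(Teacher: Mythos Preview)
Your first step --- reducing $\|E_k(\pmb{\mathfrak t})\|$ via the triangle inequality to the $\pmb{\mathfrak t}$-free scalar quantities
\[
\lambda_k:=\widetilde S_k^{\ba}+\widetilde S_k^{\bb}
=\sum_{\substack{n\geq 1\\ a_{n+1}=k}}\rho_{a_1}\cdots\rho_{a_n}
+\sum_{\substack{n\geq 1\\ b_{n+1}=k}}\rho_{b_1}\cdots\rho_{b_n}
\]
--- is exactly what the paper does. From that point on, however, your route is both harder and incomplete. The unweighted pigeonhole $\min_k\lambda_k\leq\tfrac12(\lambda_1+\lambda_2)$ only yields $\lambda_{k^*}<\rho$ in your ``easy regime'' $\rho^*<1/2$; your treatment of the complementary regime is a sketch, not a proof. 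The phrases ``long initial run of $1$'s'', ``uniformly small'', and ``a refined weighted average picks out $k^*=1$'' are not quantified, no explicit choice of $k^*=k^*(\ba,\bb)$ is given, and no inequality is actually established there. (There is also a small slip: when $\ell>2$ the symbols of $\sigma\ba,\sigma\bb$ need not lie in $\{1,2\}$, so the bound $f_1(\mathbf c)+f_2(\mathbf c)\leq 1/(1-\rho^*)$ requires $\rho^*=\max_{1\le i\le\ell}\rho_i$, not $\max(\rho_1,\rho_2)$; this can push you out of the easy regime even when $\rho_1,\rho_2$ are tiny.)

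The paper's idea is to replace your unweighted sum by the \emph{weighted} combination $\sum_{k=1}^{2}\lambda_k(1-\rho_k)$, which telescopes:
\[
\sum_{k=1}^{2}\lambda_k(1-\rho_k)
\;\le\;\sum_{n\ge 1}\rho_{a_1}\cdots\rho_{a_n}\bigl(1-\rho_{a_{n+1}}\bigr)
+\sum_{n\ge 1}\rho_{b_1}\cdots\rho_{b_n}\bigl(1-\rho_{b_{n+1}}\bigr)
=\rho_{a_1}+\rho_{b_1}=\rho_1+\rho_2\le\rho.
\]
If both $\lambda_1\ge\rho$ and $\lambda_2\ge\rho$, the left-hand side would be at least $\rho\bigl((1-\rho_1)+(1-\rho_2)\bigr)=\rho(2-\rho_1-\rho_2)>\rho$, since $\rho_1+\rho_2<1$; contradiction. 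Hence some $\lambda_{k^*}<\rho$, and this $k^*$ depends only on $\ba,\bb$. No regime split is needed, and the argument works for all $\ell\ge 2$ without change. This weighted-telescoping step is the missing idea in your proposal.
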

\begin{proof}
Our argument  is based on an idea of Boris Solomyak which was used to prove a corresponding statement for self-affine IFSs \cite[Theorem 9.1.2]{BSS2021}.

 By \eqref{Wa50}, for each $k\in \{1,2\}$ and  $\pmb{\mathfrak{t}} \in \Delta$,
\begin{equation}\label{Wa47}
  \|E_{k }({\pmb{\mathfrak{t}} })\|
   \leq \sum\limits_{n \geq 1 \atop a_{n+1}=k}\rho_{a_1} \cdots \rho_{a_n}
  +
   \sum\limits_{n \geq 1 \atop b_{n+1}=k}\rho_{b_1} \cdots \rho_{b_n}=:\lambda_k.
\end{equation}
Clearly $\lambda_k$ ($k=1,2$) only depend on $\ba$ and $\bb$.

Notice that
\begin{equation}\label{Wa49}
\begin{split}
\sum\limits_{k =1}^{2} \lambda_k (1-\rho_{k })
 &=
 \sum\limits_{n=1}^{\infty }\rho_{a_1} \cdots \rho_{a_n}(1-\rho_{a_{n+1}})
 +
 \sum\limits_{n=1}^{\infty }\rho_{b_1} \cdots \rho_{b_n}(1-\rho_{b_{n+1}})
 \\
 &=\rho_1+\rho_2\\
 &\leq \rho.\\
  \end{split}
\end{equation}
This implies that one of $\lambda_1,\lambda_2$ is smaller than $\rho$; otherwise, since $\rho_1+\rho_2\leq \rho<1$,  it follows that
$$
\sum\limits_{k =1}^{2} \lambda_k (1-\rho_{k })\geq \rho(1-\rho_1+1-\rho_2)>\rho,
$$
which contradicts \eqref{Wa49}.
 Now set
$$
k^*=\left\{
\begin{array}{ll}
1 & \mbox{ if } \lambda_1<\rho,\\
2 & \mbox{ otherwise}.
\end{array}
\right.
$$
Then $\lambda_{k^*}<\rho$. Since $\lambda_1, \lambda_2$ only depend on $\ba$ and $\bb$, so does  $k^*$. By \eqref{Wa47}, $$\| E_{k^*}(\pmb{\mathfrak{t}} ) \|\leq \lambda_{k^*}<\rho$$ for all $\tt\in \Delta$.
\end{proof}

In what follows, we always  let  $k^*=k^*(\ba, \bb)\in \{1,2\}$ be given as in Lemma \ref{Wa48}.
\begin{lem}\label{Wa45}
For all  $\pmb{\mathfrak{t}} \in \Delta$,
\begin{equation}\label{Wa46}
\left\|\left( \frac{\partial g}{{\partial\mathbf{t}_{k^* }}}(\pmb{\mathfrak{t}} )\right)^{-1}
  \right\| < \frac{1}{1-\rho}.
\end{equation}
\end{lem}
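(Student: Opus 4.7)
The plan is to read off the structure of $\partial g/\partial \mathbf{t}_{k^*}$ directly from the formula \eqref{Wa52}, combine it with the norm bound from Lemma \ref{Wa48}, and conclude by a Neumann series argument.

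More precisely, since $k^* \in \{1,2\}$ by construction, exactly one of $\delta_{k^*,1}$ and $\delta_{k^*,2}$ equals $1$ and the other equals $0$. Hence
$$\delta_{k^*,1}\mathbf{I} - \delta_{k^*,2}\mathbf{I} = \varepsilon \mathbf{I},$$
with $\varepsilon \in \{+1,-1\}$. Substituting this into \eqref{Wa52} gives
$$\frac{\partial g}{\partial \mathbf{t}_{k^*}}(\pmb{\mathfrak{t}}) = \varepsilon \mathbf{I} + E_{k^*}(\pmb{\mathfrak{t}}) = \varepsilon \bigl(\mathbf{I} + \varepsilon E_{k^*}(\pmb{\mathfrak{t}})\bigr).$$

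By Lemma \ref{Wa48}, the matrix $M := \varepsilon E_{k^*}(\pmb{\mathfrak{t}})$ satisfies $\|M\| = \|E_{k^*}(\pmb{\mathfrak{t}})\| < \rho < 1$, so $\mathbf{I} + M$ is invertible via the Neumann series $\sum_{n=0}^\infty (-M)^n$, with
$$\bigl\|(\mathbf{I}+M)^{-1}\bigr\| \leq \sum_{n=0}^\infty \|M\|^n = \frac{1}{1-\|M\|} < \frac{1}{1-\rho}.$$
Since $\varepsilon^2 = 1$, we obtain
$$\left\|\left(\frac{\partial g}{\partial \mathbf{t}_{k^*}}(\pmb{\mathfrak{t}})\right)^{-1}\right\| = \bigl\|(\mathbf{I}+M)^{-1}\bigr\| < \frac{1}{1-\rho},$$
which is the claim. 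There is no real obstacle here — the lemma is essentially a one-line corollary of Lemma \ref{Wa48} combined with the diagonal $\pm \mathbf{I}$ term isolated in \eqref{Wa52}; the substantive work was already done in establishing Lemma \ref{Wa48}.
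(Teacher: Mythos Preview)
Your proof is correct and follows essentially the same approach as the paper: both use \eqref{Wa52} to write $\partial g/\partial \mathbf{t}_{k^*}$ as $\pm\mathbf{I}$ plus the perturbation $E_{k^*}$, invoke Lemma~\ref{Wa48} for the bound $\|E_{k^*}\|<\rho$, and conclude via a Neumann series. The only cosmetic difference is that the paper assumes $k^*=1$ without loss of generality, whereas you handle both cases simultaneously via the sign $\varepsilon$.
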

\begin{proof}
  Without loss of generality we assume that $k^*=1$. The proof is similar in the case when $k^*=2$.

  Let $\pmb{\mathfrak{t}} \in \Delta$. By Lemma \ref{Wa48}, $\|E_1(\pmb{\mathfrak{t}} )\|< \rho<1$. Thanks to  \eqref{Wa52},
    \begin{equation}
    \label{e-Wa}
 \frac{\partial g}{{\partial\mathbf{t}_{1 }}}(\pmb{\mathfrak{t}} )
=
 \mathbf{I}-\left(-E_1(\pmb{\mathfrak{t}} )\right),
    \end{equation}
 where   $\mathbf{I}=\mathrm{diag}(\underbrace{1,\ldots,1}_d)$.
Since $\|E_1(\pmb{\mathfrak{t}} )\|< \rho<1$, we see that $\frac{\partial g}{{\partial\mathbf{t}_{1 }}}(\pmb{\mathfrak{t}} )$ is invertible with
 $$
 \left(\frac{\partial g}{{\partial\mathbf{t}_{1 }}}(\pmb{\mathfrak{t}})\right )^{-1}=\mathbf{I}+ \sum_{n=1}^\infty (-E_1(\pmb{\mathfrak{t}} ))^n,
 $$ from which  we obtain that
 \begin{equation}\label{Wa32}
   \left\|\left( \frac{\partial g}{{\partial\mathbf{t}_{1 }}}(\pmb{\mathfrak{t}} )\right)^{-1}
  \right\|\leq 1+\sum_{n=1}^\infty \|E_1(\pmb{\mathfrak{t}} )\|^n\leq  1+\sum_{n=1}^\infty \rho^n=\frac{1}{1-\rho}.
  \end{equation}
  \end{proof}

  Next we
 introduce  two mappings
$T_1, T_2:
  \Delta
  \to\mathbb{R}^{\ell d}$ by
\begin{equation}\label{Wa43}
T_{1}(\tt)=
  \left(
g(\pmb{\mathfrak{t}}),
\mathbf{t}_{2},  \ldots, \mathbf{t}_{\ell }\right), \quad T_{2}(\tt)=
\left(\mathbf{t}_1,
g(\pmb{\mathfrak{t}}),
\mathbf{t}_{3},  \ldots, \mathbf{t}_{\ell }\right),
\end{equation}
where $\tt=(\mathbf{t}_{1},\ldots, \mathbf{t}_{\ell})$.  Recall that $g(\tt)=\Pi^{\tt}(\ba)-\Pi^{\tt}(\bb)$.
\begin{lem}\label{lem: det T'}Let  $k^*=k^*(\ba, \bb)\in \{1,2\}$ be given as in Lemma \ref{Wa48}. Then the following properties hold.
\begin{itemize}
\item[(i)] The mapping $T_{k^*}:\;\Delta\to \R^{\ell d}$ is injective.
\item[(ii)]
For each $\tt\in \Delta$,
\begin{equation} \left| \det\left(\left( D_{\tt}T_{k^*}\right)^{-1}\right)
\right|<\left(\frac{1}{1-\rho}\right)^d.
\end{equation}
\end{itemize}
\end{lem}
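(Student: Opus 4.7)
The plan is to exploit the very simple block structure of the Jacobian of $T_{k^*}$ together with the norm bound on $(\partial g/\partial \mathbf{t}_{k^*})^{-1}$ already established in Lemma \ref{Wa45}. I will dispatch part (ii) first by a direct determinant computation, and then use an integrated form of \eqref{e-Wa} to prove injectivity in part (i).

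For part (ii), I would write $T_{k^*}$ in the coordinates $(\mathbf{t}_1,\ldots,\mathbf{t}_\ell)\in(\R^d)^\ell$ and compute the $(\ell d)\times(\ell d)$ Jacobian block by block. For $k^*=1$,
\[
D_\tt T_{1}=\begin{pmatrix}
\partial g/\partial\mathbf{t}_1 & \partial g/\partial\mathbf{t}_2 & \cdots & \partial g/\partial\mathbf{t}_\ell \\
0 & \mathbf{I}_d & \cdots & 0 \\
\vdots & & \ddots & \vdots \\
0 & 0 & \cdots & \mathbf{I}_d
\end{pmatrix},
\]
which is block upper triangular, so $\det(D_\tt T_1)=\det(\partial g/\partial\mathbf{t}_1)$. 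For $k^*=2$, the analogous block matrix becomes block triangular after swapping the first and second block rows and then the first and second block columns (each swap contributes $(-1)^d$, so the two cancel), yielding $\det(D_\tt T_2)=\det(\partial g/\partial\mathbf{t}_2)$. In either case $|\det((D_\tt T_{k^*})^{-1})|=|\det((\partial g/\partial\mathbf{t}_{k^*})^{-1})|$, and combining the elementary bound $|\det M|\le \|M\|^d$ for $d\times d$ real matrices with Lemma \ref{Wa45} gives $|\det((D_\tt T_{k^*})^{-1})|<(1/(1-\rho))^d$, as required.

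For part (i), assume $T_{k^*}(\ss)=T_{k^*}(\tt)$. Comparing coordinates shows $\mathbf{s}_i=\mathbf{t}_i$ for every $i\ne k^*$, so the whole line segment $\{\ss+r(\tt-\ss):r\in[0,1]\}$ lies in $\Delta$ (since $\Delta$ is a convex ball) and has constant non-$k^*$ coordinates. Applying the fundamental theorem of calculus to the $C^1$ map $r\mapsto g(\ss+r(\tt-\ss))$,
\[
g(\tt)-g(\ss)=\left(\int_{0}^{1}\frac{\partial g}{\partial \mathbf{t}_{k^*}}\bigl(\ss+r(\tt-\ss)\bigr)\,dr\right)(\mathbf{t}_{k^*}-\mathbf{s}_{k^*}).
\]
By \eqref{Wa52}, $\partial g/\partial\mathbf{t}_{k^*}=(-1)^{k^*+1}\mathbf{I}+E_{k^*}$, and Lemma \ref{Wa48} gives $\|E_{k^*}(\cdot)\|<\rho<1$ uniformly on $\Delta$. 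Hence the integrated matrix is of the form $(-1)^{k^*+1}\mathbf{I}+\widetilde E$ with $\|\widetilde E\|<\rho<1$, which is invertible by the Neumann series. Since $g(\tt)=g(\ss)$ follows from $T_{k^*}(\tt)=T_{k^*}(\ss)$, we conclude $\mathbf{t}_{k^*}=\mathbf{s}_{k^*}$, and therefore $\ss=\tt$.

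The argument is essentially bookkeeping; the real content has already been packaged into Lemmas \ref{Wa48}--\ref{Wa45}. The only mild obstacle I anticipate is keeping the two cases $k^*\in\{1,2\}$ uniform in notation — in particular handling the sign $(-1)^{k^*+1}$ consistently in \eqref{Wa52} so that the Neumann-series step works for either choice of $k^*$.
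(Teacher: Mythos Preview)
Your proposal is correct and follows essentially the same approach as the paper: for (ii) both compute the block-triangular Jacobian of $T_{k^*}$ to reduce to $|\det((\partial g/\partial\mathbf{t}_{k^*})^{-1})|\le\|(\partial g/\partial\mathbf{t}_{k^*})^{-1}\|^d$ and then invoke Lemma~\ref{Wa45}; for (i) the paper defines the auxiliary map $\psi(\mathbf{t}_{k^*})=g(\tt)-(-1)^{k^*+1}\mathbf{t}_{k^*}$ and applies the mean-value inequality on the convex slice, whereas you integrate the Jacobian along the segment and invert via a Neumann series---the two arguments are equivalent reformulations of the same contraction estimate $\|E_{k^*}\|<\rho$.
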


\begin{proof}
 Without loss of generality, we may assume that $k^*=1$. Then by  Lemma \ref{Wa48} and \eqref{Wa43},
 \begin{equation}
 \label{e-new1}
 \|E_1(\tt)\|<\rho, \qquad  T_1(\tt)=\left(
g(\pmb{\mathfrak{t}}),
\mathbf{t}_{2},  \ldots, \mathbf{t}_{\ell }\right)
\end{equation} for $\tt=(\mathbf{t}_{1},  \ldots, \mathbf{t}_{\ell })\in \Delta$.  Hence to prove (i), it suffices to show that
for given $\mathbf{t}_{2}, \ldots, \mathbf{t}_{\ell }\in \R^d$ with $\sum_{i=2}^\ell |\mathbf{t}_{2}|^2<r_0^2$, the mapping
$$
\mathbf{t}_1\mapsto g(\mathbf{t}_{1}, \mathbf{t}_{2}, \ldots, \mathbf{t}_{\ell })
$$
is injective on $\Delta_1:=\left\{\mathbf{t}_1\in \R^d:\; |\mathbf{t}_1|<\sqrt{r_0^2-\sum_{i=2}^\ell |\mathbf{t}_{2}|^2}\right\}$.
To this end, define $\psi:\; \Delta_1\to \R^d$ by
$$
\psi(\mathbf{t}_1)=g(\mathbf{t}_{1},  \ldots, \mathbf{t}_{\ell })-\mathbf{t}_1.
$$
Then by \eqref{e-Wa} and \eqref{e-new1}, $$\|D_{\mathbf{t}_1}\psi\|=\left\|\frac{\partial g}{\partial \mathbf{t}_1}(\mathbf{t}_{1},  \ldots, \mathbf{t}_{\ell })-\mathbf{I}\right\|=\|E_1(\mathbf{t}_{1},  \ldots, \mathbf{t}_{\ell })\|<\rho \quad \mbox{for each $\mathbf{t}_{1}\in \Delta_1$}.
$$
 Since $\Delta_1$ is a convex open subset of $\R^d$, by \cite[Theorem 9.19]{Rudin1976} the above inequality implies that
 $$
 |\psi(\mathbf{t}_1)-\psi(\mathbf{s}_1)|\leq \rho |\mathbf{t}_1-\mathbf{s_1}|<|\mathbf{t}_1-\mathbf{s_1}|
 $$
 for all distinct $\mathbf{t}_1, \mathbf{s_1}\in \Delta_1$.  It follows that for distinct $\mathbf{t}_1, \mathbf{s_1}\in \Delta_1$,
 \begin{eqnarray*}
 |g(\mathbf{t}_1)-g(\mathbf{s}_1)|&=&|\psi(\mathbf{t}_1)+\mathbf{t}_1-\psi(\mathbf{s}_1)-\mathbf{s}_1|\\
 &\geq& |\mathbf{t}_1-\mathbf{s}_1|-|\psi(\mathbf{t}_1)-\psi(\mathbf{s}_1)|\\
 &>&0.
\end{eqnarray*}
This proves (i).

To prove (ii), notice that
\[ D_{\tt}T_{1}=\left(\begin{array}{c|cccc}
    \frac{\partial{g}}{\partial{\mathbf{t}_1}}(\tt) & \frac{\partial{g}}{\partial{\mathbf{t}_2}}(\tt)  & \frac{\partial{g}}{\partial{\mathbf{t}_3}}(\tt) & \cdots & \frac{\partial{g}}{\partial{\mathbf{t}_{\ell }}}(\tt) \\\hline
    &   &  &  & \\
  \mathbf{0}_{(\ell -1)d,d}  &   &  & \mathbf{I}_{(\ell -1)d} &  \\
    & & &  &
  \end{array}\right),\]
  where $\mathbf{I}_{(\ell-1)d}:={\rm diag}(\underbrace{1, \dots ,1}_{(\ell-1)d})$ and
  $\mathbf{0}_{(\ell-1)d,d}$ is the $((\ell-1)d)\times d$ all-zero matrix.
  That is,
\[D_{\tt}T_{1}=\left(\begin{array}{@{}c|c@{}}A(\pmb{\mathfrak{t}})& B(\pmb{\mathfrak{t}})\\ \hline
\mathbf{0}_{(\ell-1)d, d}&\mathbf{I}_{(\ell-1)d}\end{array}\right),\]
where $A(\pmb{\mathfrak{t}})$ and $B(\pmb{\mathfrak{t}})$ are given by
\[A(\pmb{\mathfrak{t}})=
\frac{\partial{g}}{\partial{\mathbf{t}_1}}
(\pmb{\mathfrak{t}} ),
\qquad B(\pmb{\mathfrak{t}})=
\left(\frac{\partial{g}}{\partial{\mathbf{t}_2}}(\tt), \dots ,\frac{\partial{g}}{\partial{\mathbf{t}_{\ell }}}(\tt)\right).
 \]
 Hence by Lemma \ref{Wa45}, $A^{-1}(\pmb{\mathfrak{t}} )$ exists and
 \[(D_{\tt}T_{1})^{-1}=\left(\begin{array}{@{}c|c@{}}A^{-1}(\pmb{\mathfrak{t}})&-A^{-1}(\pmb{\mathfrak{t}})\cdot B(\pmb{\mathfrak{t}})\\ \hline
 \mathbf{0}_{(\ell-1)d,d)} &\mathbf{I}_{(\ell-1)d}\end{array}\right).\]
 It follows that
 \begin{equation*}\label{det A^-1,2}
 \det\left((D_{\tt}T_{1})^{-1}\right)=
 \det\left(A^{-1}(\pmb{\mathfrak{t}})\right)
 =\det\left(\left(\frac{\partial g}{\partial\mathbf{t}_1}(\tt)\right)^{-1}\right).
 \end{equation*}
By the Hadamard's inequality (see e.g.~\cite[Corollary 7.8.2]{HornJohnson1985}),
\begin{equation*}\label{Wa41}
\left|\det\left((D_{\tt}T_{1})^{-1}\right)\right|=
\left|\det\left(
  \left(\frac{\partial g}{{\partial\mathbf{t}_{1}}}(\pmb{\mathfrak{t}} )\right)^{-1}\right)\right|
   \leq \left\|\left(\frac{\partial g}{{\partial\mathbf{t}_{1}}}(\pmb{\mathfrak{t}} )\right)^{-1}\right\|^{d}
\leq \left(\frac{1}{1-\rho}\right)^{d},
\end{equation*}
where the last inequality follows from Lemma \ref{Wa45}. This completes the proof of (ii).
\end{proof}
To shorten the notation, from now on we write
\begin{equation}\label{a15}
  C_{*}:=\left(\frac{1}{1-\rho}\right)^{d}.
\end{equation}
Let $\ss=(\mathbf{s}_1,\ldots, \mathbf{s}_\ell)\in \Delta$ and $\delta, r>0$. Let $A$ be a given real invertible $d\times d$ matrix.

Write
\begin{eqnarray*}
E&:=&\left\{\tt\in B_{\R^{\ell d}}(\ss, \delta)\cap \Delta:\; \Pi^{\tt}(\ba)-\Pi^{\tt}(\bb)\in A^{-1}B_{\R^d}(0,r)\right\} \\
&=& \left\{\tt\in B_{\R^{\ell d}}(\ss, \delta)\cap \Delta:\; g(\tt)\in A^{-1}B_{\R^d}(0,r)\right\}.
\end{eqnarray*}
Below we estimate $\mathcal L_{\ell d}(E)$.

 Notice that $\mathcal L_{\ell d}(E)=\mathcal L_{\ell d}\left(T_{k^*}^{-1}(T_{k^*}(E))\right)$. Recall that  by Lemma \ref{lem: det T'}, the mapping $T_{k^*}: \Delta\to \R^{\ell d}$ is injective, and $\det\left((D_{\tt}T_{k^*})^{-1}\right)\leq C_*$ for $\tt\in \Delta$. So by the substitution rule of  multiple integration (see e.g. \cite[Theorem 10.9]{Rudin1976}),
 \begin{equation}
 \label{e-W3}
 \mathcal L_{\ell d}(E)\leq C_*\mathcal L_{\ell d}\left(T_{k^*}(E)\right).
 \end{equation}

 Next we estimate $\mathcal L_{\ell d}\left(T_{k^*}(E)\right)$. Without loss of generality we may assume that $k^*=1$.
 Notice that for each $\tt\in E$,
 $$
 g(\tt)\in A^{-1}B_{\R^d}(0,r);$$
 in the meantime since $\Pi^{\tt}(\ba),\Pi^{\tt}(\bb)\in S$,  it follows that
  $$g(\tt)=\Pi^{\tt}(\ba)-\Pi^{\tt}(\bb)\in
 B_{\R^d}(0, 2{\rm diam}(S)).$$
 Hence,  for each $\tt\in E$,
 $$
 g(\tt)\in \left(A^{-1}B_{\R^d}(0,r)\right)\cap B_{\R^d}(0, 2{\rm diam}(S)).
 $$
 Since $T_1(\tt)=(g(\tt), \mathbf{t_2},\ldots, \mathbf{t}_\ell)$,  it follows that
 $$
 T_{1}(E)\subset F_1\times F_2,
 $$
 where \begin{align*}
 F_1:=&\left(A^{-1}B_{\R^d}(0,r)\right)\cap B_{\R^d}(0, 2{\rm diam}(S)),\\
 F_2:=&\left\{(\mathbf{t}_2,\ldots, \mathbf{t}_\ell)\in \R^{(\ell-1)d}:\; |\mathbf{t}_i-\mathbf{s}_i|<\delta\right\}.
 \end{align*}
Consequently,
 \begin{eqnarray*}
 \mathcal L_{\ell d}\left(T_{1}(E)\right)&\leq & \mathcal L_{d}(F_1)\cdot \mathcal L_{(\ell-1)d}(F_2)\\
 &\leq & 2^d\min\left\{
 \frac{r^k (2{\rm diam}(S))^{d-k}}{\phi^k(A)}:\; k=0,1,\ldots,d
 \right\}\cdot (2\delta)^{(\ell-1)d}\\
 &\leq & u \min\left\{
 \frac{r^k }{\phi^k(A)}:\; k=0,1,\ldots,d
 \right\}
 \end{eqnarray*}
 with $u:=2^{\ell d}\delta^{(\ell-1)d}\max\left\{1, 2^d{\rm diam}(S)^d\right\}$, where we have used Lemma \ref{lem-geom} in the second inequality.  Combining this with \eqref{e-W3} yields that
 $$
 \mathcal L_{\ell d}(E)\leq C_*  \mathcal L_{\ell d}(T_1(E))\leq uC_* \min\left\{
 \frac{r^k }{\phi^k(A)}:\; k=0,1,\ldots,d
 \right\}.
 $$
 This completes the proof of Lemma \ref{lem-key}.
\end{proof}

\section{Translational family of  IFSs generated by a  $C^1$ conformal IFS}
\label{S-conformal}
In this section, we prove the following result.

\begin{thm}
\label{thm-6.1}
Let $\mathcal F=\{f_i:\; S\to S\}_{i=1}^\ell$ be an IFS on a compact set $S\subset \R^d$. Suppose that the following properties hold:
\begin{itemize}
\item[(i)]
The set $S$ is connected,  $S=\overline{{\rm int}(S)}$ and $f_i(S)\subset {\rm int}(S)$ for all $i$.
\item[(ii)] There is a bounded connected open set $U\supset S$ such that each $f_i$ extends to a $C^1$ conformal diffeomorphism $f_i:\; U\to f_i(U)\subset U$ with $$\rho_i:=\sup_{x\in U}\|f_i'(x)\|<1.$$
\item[(iii)] $\max_{i\neq j} \rho_i+\rho_j<1$.
\end{itemize}
Then there is a small $r_0>0$ such that the translational family ${\mathcal F}^{\tt}=\{f_i^{\tt}=f_i+\mathbf{t}_i\}_{i=1}^\ell$, $\tt=(\mathbf{t}_1,\ldots, \mathbf{t}_\ell)\in \Delta:=\{\ss\in \R^{\ell d}:\; |\ss|<r_0\}$, satisfies the GTC with respect to the Lebesgue measure $\mathcal L_{\ell d}$ on $\Delta$.
\end{thm}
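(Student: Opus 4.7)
My plan is to mirror the proof of Theorem~\ref{thm-3.3}, replacing the dominated lower triangular structure with conformal structure. The starting observation is that Lemma~\ref{lem-key} is proved using only the IFS structure and the assumption $\max_{i\neq j}(\rho_i+\rho_j)<1$, without invoking lower triangularity, so it applies verbatim in the conformal setting. Thus only two ingredients need to be replaced: the distortion estimate of Proposition~\ref{pro-3.2}, and the mean value theorem step (which in the lower triangular case required convexity of $S$).

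For the distortion estimate, I would prove that there exist $h:(0,r_0)\to (0,\infty)$ with $h(\delta)\to 0$ and $C(\delta)\geq 1$ such that for all $n\in \N$, $\pmb{\omega}\in\Sigma_n$, $\ss,\tt\in\Delta$ with $|\ss-\tt|<\delta$, and all $x,y\in S$,
\begin{equation*}
\|D_xf^{\ss}_{\pmb{\omega}}\|\leq C(\delta)e^{nh(\delta)}\|D_yf^{\tt}_{\pmb{\omega}}\|.
\end{equation*}
Since translation does not affect the differential, $D_zf^{\ss}_p=D_zf_p$ is independent of $\ss$, and conformality gives $\|D_xf^{\ss}_{\pmb{\omega}}\|=\prod_{i=1}^n \|D_{f^{\ss}_{\sigma^i\pmb{\omega}}(x)}f_{\omega_i}\|$. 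Expressing $\log(\|D_xf^{\ss}_{\pmb{\omega}}\|/\|D_yf^{\tt}_{\pmb{\omega}}\|)$ as a telescoping sum and applying uniform continuity of $z\mapsto \log\|D_zf_p\|$ on $S$ together with Lemma~\ref{lem-b20} (to estimate $|f^{\ss}_{\sigma^i\pmb{\omega}}(x)-f^{\tt}_{\sigma^i\pmb{\omega}}(y)|\leq \delta/(1-\theta)+\theta^{n-i}\mathrm{diam}(S)$) yields the bound, where the parameter-variation piece contributes $n h(\delta)$ and the geometric-contraction piece contributes a uniformly bounded constant (this mirrors Steps~1--2 of Proposition~\ref{pro-3.2}). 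Since any two conformal matrices $A=\lambda_1 O_1$, $B=\lambda_2 O_2$ satisfy $\|AB^{-1}\|=\lambda_1/\lambda_2$, this at once yields the conformal analog of \eqref{e-e3.2}: $\|D_xf^{\ss}_{\pmb{\omega}}(D_yf^{\tt}_{\pmb{\omega}})^{-1}\|\leq C(\delta)e^{nh(\delta)}$.

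For the mean value step, I would establish the approximate inequality
\begin{equation*}
C(\delta)^{-1}e^{-nh(\delta)}\|D_yf^{\tt}_{\pmb{\omega}}\|\,|x-y|\;\leq\;|f^{\tt}_{\pmb{\omega}}(x)-f^{\tt}_{\pmb{\omega}}(y)|\;\leq\;C(\delta)e^{nh(\delta)}\|D_yf^{\tt}_{\pmb{\omega}}\|\,|x-y|
\end{equation*}
for all $x,y\in S$, $\tt\in\Delta$. The upper bound comes from the path integral $|f^{\tt}_{\pmb{\omega}}(x)-f^{\tt}_{\pmb{\omega}}(y)|\leq \int_\gamma \|D_zf^{\tt}_{\pmb{\omega}}\|\,|dz|$ taken over a rectifiable path $\gamma$ joining $x$ and $y$ inside $U$ (for nearby pairs the straight segment lies in $U$ by compactness of $S$; for distant pairs one uses quasi-convexity of a slightly shrunken open neighborhood of $S$ contained in $U$) combined with the distortion bound above. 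The lower bound then follows by applying this upper bound to the conformal inverse $(f^{\tt}_{\pmb{\omega}})^{-1}$, whose differential has norm $\|D_\cdot f^{\tt}_{\pmb{\omega}}\|^{-1}$.

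With both ingredients in place, the proof concludes as in Theorem~\ref{thm-3.3}. Setting $\pmb{\omega}=\bi\wedge\bj$, $\ba=\sigma^n\bi$, $\bb=\sigma^n\bj$, and $A=D_yf^{\ss}_{\pmb{\omega}}$ (which is conformal, so $\phi^k(A)=\|D_yf^{\ss}_{\pmb{\omega}}\|^k$), the approximate mean value inequality applied to $x=\Pi^{\tt}(\ba)$, $y=\Pi^{\tt}(\bb)$, together with the distortion estimate to pass from $\tt$ to $\ss$, gives
\begin{equation*}
\{\tt\in B(\ss,\delta)\cap\Delta:|\Pi^{\tt}(\bi)-\Pi^{\tt}(\bj)|<r\}\subset \{\tt:\Pi^{\tt}(\ba)-\Pi^{\tt}(\bb)\in A^{-1}B_{\R^d}(0,C(\delta)^2 e^{2nh(\delta)}r)\};
\end{equation*}
applying Lemma~\ref{lem-key} to the right-hand side and comparing with $Z^{\ss}_{\pmb{\omega}}(r)$ (using distortion once more to absorb the difference between $\|D_yf^{\ss}_{\pmb{\omega}}\|$ and $\inf_x\|D_xf^{\ss}_{\pmb{\omega}}\|$) yields the GTC with $\psi(\delta)$ a fixed multiple of $h(\delta)$. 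The principal technical obstacle is the upper bound in the approximate mean value inequality for widely separated $x,y\in S$ whose straight segment exits $U$: one must produce a uniformly controlled connecting path in $U$ and track the distortion along it, which is the only place where the mere connectedness (rather than convexity) of $S$ forces extra work.
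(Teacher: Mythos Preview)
Your overall architecture is correct and matches the paper's: a distortion estimate for the conformal scaling factor (analogous to Steps~1--2 of Proposition~\ref{pro-3.2}), a replacement for the mean value step, and application of Lemma~\ref{lem-key}. The paper carries this out by first choosing nested open connected sets $S\subset V\subset\overline{V}\subset W\subset\overline{W}\subset U$ stable under every $f_i^{\tt}$, and then invoking only the \emph{lower} bound
\[
D\cdot\Big(\inf_{z\in W}\|h'(z)\|\Big)\cdot|x-y|\;\le\;|h(x)-h(y)|\qquad(x,y\in V),
\]
which it cites from Patzschke \cite[Lemma~2.2]{Patzschke1997} for the class of $C^1$ injective conformal maps $h:W\to W$ with $h(\overline{V})\subset V$. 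Combined with the sub-exponential distortion $C_n$ and the parameter-variation bound \eqref{e-cn1*}, this yields the inclusion \eqref{e-split*} and the rest follows exactly as you outline.

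The gap in your proposal is the step ``the lower bound then follows by applying this upper bound to the conformal inverse''. Your upper bound for $h=f^{\tt}_{\pmb{\omega}}$ uses a path in a fixed quasi-convex neighborhood of $S$ inside $U$; to run the same argument for $h^{-1}$ you would need a path from $h(x)$ to $h(y)$ inside $\operatorname{dom}(h^{-1})=h(U)$ of length $\le L\,|h(x)-h(y)|$ with $L$ independent of $|\pmb{\omega}|$. But $h(U)$ shrinks to a point as $|\pmb{\omega}|\to\infty$, and although $h(x),h(y)\in V$, a path joining them inside $V$ or $W$ need not remain in $h(U)$. So the ``principal technical obstacle'' you flag is misplaced: it is the \emph{lower} bound (not the upper) that genuinely requires the connectedness hypothesis and extra work. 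The remedy is exactly what the paper does---set up the nested domains and quote the conformal lower-distortion lemma---after which your upper bound becomes superfluous.
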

\begin{proof}
By the assumptions (i) and (ii), we may pick two open connected  sets $V$ and $W$ (for instance, we may let $V$ and $W$ be the $\epsilon$-neighborhood and $2\epsilon$-neighborhood of $S$, respectively, for a sufficiently small $\delta>0$) such that
$$S\subset V\subset \overline{V}\subset W\subset \overline {W}\subset  U, \mbox{  and}
$$
$$
f_i(\overline{V})\subset V \mbox{ and } f_i(\overline{W})\subset W \quad \mbox{ for all } i.
$$
Then by continuity, we can pick a small $r_0$ such that for all $\tt=(\mathbf{t}_1,\ldots, \mathbf{t}_\ell)\in \R^{\ell d}$ with $|\tt|<r_0$,
$$
f_i^{\tt}(\overline{V})\subset V \mbox{ and } f_i^{\tt}(\overline{W})\subset W \quad \mbox{ for all } i,
$$
where  $f_i^{\tt}:=f_i+\mathbf{t}_i$.  Fix this $r_0$ and set $\Delta=\{\ss\in \R^{\ell d}:\; |\ss|<r_0\}$.
In what follows we prove that the family ${\mathcal F}^{\tt}$, $\tt\in \Delta$, satisfies the GTC with respect to $\mathcal L_{\ell d}$ on $\Delta$.

For $i=1,\ldots, \ell$, define $g_i: \overline{W}\to \R$ by $$g_i(z)=\log \|f_i'(z)\|.$$  Then $g_i$ is continuous on $\overline{W}$ for each $i$. Define $\gamma:\; (0,\infty)\to (0,\infty)$ by
$$
\gamma(u)=\max_{1\leq i\leq \ell} \sup\{|g_i(x)-g_i(y)|:\; x,y\in \overline{W},\; |x-y|\leq u\}.
$$
That is, $\gamma$ is a common continuity modulus of $g_1,\ldots, g_\ell$. Clearly $\lim_{u\to 0}\gamma(u)=0$.
Notice that for $\tt\in \Delta$, $y\in W$ and $\pmb{\omega}\in \Sigma_n$,
$$
\log \|(f^{\tt}_{{\pmb{\omega}}} )'(y)\|=\sum_{k=1}^n g_{\omega_k}(f^{\tt}_{\sigma^k{\pmb{\omega}}}(y)).
$$
 Using similar arguments (with minor changes)  as in Step 1 and Step 2 of the proof of Proposition \ref{pro-3.2}, we can show that the following two properties hold:
\begin{itemize}
\item[(a)] Write for $n\in \N$,
\begin{equation}
\label{cn0*}
C_n:=\sup\left\{ \frac{\|(f^{\tt}_{{\pmb{\omega}}} )'(y)\|}{\|(f^{\tt}_{{\pmb{\omega}}} )'(z)\|}:\; \tt\in \Delta,\; y,z\in W, \;{\pmb{\omega}}\in \Sigma_n\right\}.
\end{equation}
Then
$\lim_{n\to \infty}\frac{1}{n}\log C_n=0$.

\item[(b)] For  $y\in W$, $\ss,\tt\in \Delta$, $n\in \N$ and ${\pmb{\omega}}\in \Sigma_n$,
 \begin{equation}
 \label{e-cn1*}
 \frac{\|(f^{\tt}_{{\pmb{\omega}}} )'(y)\|}{\|(f^{\ss}_{{\pmb{\omega}}} )'(y)\|}\leq \exp\left(n\gamma\left(\frac{|\tt-\ss|}{1-\theta}\right)\right),
 \end{equation}
 where $\theta:=\max_{1\leq i\leq \ell}\rho_i<1$.
 \end{itemize}

Let $\mathcal H$ denote the collection of $C^1$ injective conformal mappings $h: W\to W$ such that $h(\overline{V})\subset V$. The following fact is known (for a proof, see e.g. part 3 of the proof of \cite[Lemma 2.2]{Patzschke1997}):
 there exists a constant $D\in (0,1)$ depending on $V$ and $W$, such that
\begin{equation}
\label{e-6.1}
D\cdot \left(\inf_{z\in W}\| h'(z)\|\right) \cdot |x-y|\leq |h(x)-h(y)|  \quad \mbox{ for all }h\in \mathcal H,\; x,y\in V.
\end{equation}

Now fix $\ss\in \Delta$ and $\delta\in (0,r_0)$. Let $\bi,\bj\in \Sigma$ with $\bi\neq \bj$. Set $$\pmb{\omega}=\bi\wedge \bj\quad \mbox{ and }\quad n=|\pmb{\omega}|.$$
Write $\ba=\sigma^n \bi$ and $\bb=\sigma^n \bj$. Clearly $a_1\neq b_1$. Fix $y\in S$. We claim that
 for $r>0$,
\begin{equation}
\label{e-split*}
\begin{split}
&\left\{\tt \in B_{\R^{\ell d}}(\ss,\delta)\cap \Delta:\; |\Pi^{\tt}(\bi)-\Pi^{\tt}(\bj)|< r\right\}\\
&\mbox{}\; \subset
\left\{\tt\in B_{\R^{\ell d}}(\ss,\delta)\cap \Delta:\; \Pi^{\tt}(\ba)-\Pi^{\tt}(\bb)\in \left(D_yf^{\ss}_{\pmb{\omega}}\right)^{-1}
B_{\R^d}\left(0,c(n,\delta)r\right) \right\},
\end{split}
\end{equation}
where
$$
c(n,\delta):=D^{-1}C_n \exp\left(n\gamma\left(\frac{\delta}{1-\theta}\right)\right){\color{red} >1},
$$
in which $D$ is the constant from \eqref{e-6.1}.

To show \eqref{e-split*}, let $\tt \in B_{\R^{\ell d}}(\ss,\delta)\cap \Delta$ so that $|\Pi^{\tt}(\bi)-\Pi^{\tt}(\bj)|<r$. Notice that
\begin{equation*}
\begin{split}
|\Pi^{\tt}(\bi)-\Pi^{\tt}(\bj)|&=|f^{\tt}_{\pmb{\omega}}(\Pi^{\tt}(\ba))-f^{\tt}_{\pmb{\omega}}(\Pi^{\tt}(\bb))|\\
&\geq D\cdot \left(\inf_{z\in W}\| (f^{\tt}_{\pmb{\omega}})'(z)\|\right) \cdot |\Pi^{\tt}(\ba)-\Pi^{\tt}(\bb)|\quad \mbox{ (by \eqref{e-6.1})}\\
&\geq D(C_n)^{-1} \exp\left(-n\gamma\left(\frac{\delta}{1-\theta}\right)\right)\cdot\| (f^{\ss}_{\pmb{\omega}})'(y)\|\cdot |\Pi^{\tt}(\ba)-\Pi^{\tt}(\bb)|,
\end{split}
\end{equation*}
where in the last inequality we have used \eqref{cn0*} and \eqref{e-cn1*}.  It follows that
$$
|\Pi^{\tt}(\ba)-\Pi^{\tt}(\bb)|\leq \| (f^{\ss}_{\pmb{\omega}})'(y)\|^{-1}\cdot D^{-1}C_n \exp\left(n\gamma\left(\frac{\delta}{1-\theta}\right)\right)\cdot r.
$$
Since $(f^{\ss}_{\pmb{\omega}})'(y)=D_yf^{\ss}_{\pmb{\omega}}$  is a scalar multiple of an orthogonal matrix, the above inequality implies that
$$
\Pi^{\tt}(\ba)-\Pi^{\tt}(\bb)\in  \left(D_yf^{\ss}_{\pmb{\omega}}\right)^{-1}
B_{\R^d}\left(0,c(n,\delta)r\right).
$$
from which  \eqref{e-split*} follows.

 By \eqref{e-split*} and Lemma \ref{lem-key} (which is also valid in this context),
\begin{equation*}
\begin{split}
\mathcal L_{\ell d}&\left\{\tt \in B_{\R^{\ell d}}(\ss,\delta)\cap \Delta:\; |\Pi^{\tt}(\bi)-\Pi^{\tt}(\bj)|< r\right\}\\
&\leq \mathcal L_{\ell d} \left\{\tt\in B_{\R^{\ell d}}(\ss,\delta)\cap \Delta:\; \Pi^{\tt}(\ba)-\Pi^{\tt}(\bb)\in \left(D_yf^{\ss}_{\pmb{\omega}}\right)^{-1}
B_{\R^d}\left(0,c(n,\delta)r\right) \right\}\\
&\leq \widetilde{C}\cdot\min\left\{\frac{c(n,\delta)^k r^k}{\phi^k(D_yf^{\ss}_{\pmb{\omega}})}:\; k=0,1\ldots,d\right\}\\
&\leq \widetilde{C}c(n,\delta)^d\cdot\min\left\{\frac{ r^k}{\phi^k(D_yf^{\ss}_{\pmb{\omega}})}:\; k=0,1\ldots,d\right\}\\
&= \widetilde{C}D^{-d} (C_n)^d \exp\left(nd\gamma\left(\frac{\delta}{1-\theta}\right)\right) \min\left\{\frac{r^k}{\phi^k(D_yf^{\ss}_{\pmb{\omega}})}:\; k=0,1\ldots,d\right\}.
\end{split}
\end{equation*}
Since $y\in S$ is arbitrary and $\Pi^{\ss}(\Sigma)\subset S$, recalling \begin{equation*}
Z_{{\pmb{\omega}}}^{\ss}(r)=
\inf_{x\in \Sigma} \min\left \{ \frac{r^k}{ \phi^k (D_{\Pi^{\ss} x}f^{\ss}_{{\pmb{\omega}}} ) }:\; k=0, 1,\ldots, d\right\},
\end{equation*}
 it follows that
\begin{align*}
\mathcal L_{\ell d}& \left\{\tt \in B_{\R^{\ell d}}(\ss,\delta)\cap \Delta:\; |\Pi^{\tt}(\bi)-\Pi^{\tt}(\bj)|< r\right\}
\\
& \leq \widetilde{C}D^{-d} (C_n)^d \exp\left(nd\gamma\left(\frac{\delta}{1-\theta}\right)\right)Z_{\pmb{\omega}}^{\ss}(r)\\
&\leq c_\delta e^{n\psi(\delta)}Z_{\pmb{\omega}}^{\ss}(r),
\end{align*}
where
$$
c_\delta:= \sup_{n\in \N}\widetilde{C}D^{-d} (C_n)^d e^{-n\delta}<\infty,\quad \psi(\delta):=\delta+d\gamma\left(\frac{\delta}{1-\theta}\right).
$$
Since $\lim_{u\to 0}\gamma(u)=0$, we  see that  $\lim_{\delta\to 0} \psi(\delta)=0$.  Thus $({\mathcal F}^{\tt})_{\tt\in \Delta}$ satisfies the GTC.
\end{proof}

\section{Direct product of parametrized families of $C^1$ IFSs}
\label{S-direct}

In this section we  study the direct product of parametrized families of $C^1$ IFSs (cf. Definition \ref{de-1.5}).
The main result is the following,   stating that the property of GTC  is preserved under the direct product.

\begin{pro}
\label{pro-4.2}
Let $\ell\in \N$ with $\ell\geq 2$. Suppose that for $k=1,\ldots, n$,  $(\mathcal F_k^{t_k})_{t_k\in {\Omega}_k}$ is a parametrized family of
$C^1$ IFSs on $Z_j\subset \R^{q_k}$,  satisfying the GTC with respect to a locally finite Borel measure $\eta_k$ on the metric space $({\Omega}_k, d_{{\Omega}_k})$.  Moreover, suppose all the individual IFSs have $\ell$ contractions. Set
$$
\mathcal F^{(t_1,\ldots, t_n)}=\mathcal F_1^{t_1}\times\cdots\times \mathcal F_n^{t_n},\quad (t_1,\ldots, t_n)\in {\Omega}_1\times\cdots \times {\Omega}_n.
$$
Endow ${\Omega}:={\Omega}_1\times\cdots \times {\Omega}_n$ with the product metric $d_{\Omega}$ as follows:
$$
d_\Omega((t_1,\ldots, t_n), (s_1,\ldots, s_n))=\left(\sum_{k=1}^n d_{{\Omega}_k}(s_k, t_k)^2\right)^{1/2}.
$$
Then the family $\mathcal F^{(t_1,\ldots, t_n)}$, $(t_1,\ldots, t_n)\in {\Omega}_1\times\cdots \times {\Omega}_n$, satisfies the GTC with respect to
$\eta_1\times\cdots\times\eta_n$.
\end{pro}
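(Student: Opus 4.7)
The plan is to combine the individual GTC bounds coordinate by coordinate, exploiting the fact that a direct product IFS has block-diagonal Jacobians and a coding map that factors through the projections.

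First I would reduce to a product set. Given $t_0=(t_{0,1},\ldots,t_{0,n})\in\Omega$ and $\delta>0$, the product metric immediately gives
\[
B_\Omega(t_0,\delta)\subset B_{\Omega_1}(t_{0,1},\delta)\times\cdots\times B_{\Omega_n}(t_{0,n},\delta).
\]
Moreover, the coding map of the product system factors as $\Pi^t(\mathbf{i})=(\Pi_1^{t_1}(\mathbf{i}),\ldots,\Pi_n^{t_n}(\mathbf{i}))$, and
\[
|\Pi^t(\mathbf{i})-\Pi^t(\mathbf{j})|^2=\sum_{k=1}^n|\Pi_k^{t_k}(\mathbf{i})-\Pi_k^{t_k}(\mathbf{j})|^2,
\]
so the event $|\Pi^t(\mathbf{i})-\Pi^t(\mathbf{j})|<r$ implies $|\Pi_k^{t_k}(\mathbf{i})-\Pi_k^{t_k}(\mathbf{j})|<r$ for every $k$. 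Combining these observations with Fubini applied to $\eta=\eta_1\times\cdots\times\eta_n$ gives
\[
\eta\bigl\{t\in B_\Omega(t_0,\delta):|\Pi^t(\mathbf{i})-\Pi^t(\mathbf{j})|<r\bigr\}\le\prod_{k=1}^n\eta_k\bigl\{t_k\in B_{\Omega_k}(t_{0,k},\delta):|\Pi_k^{t_k}(\mathbf{i})-\Pi_k^{t_k}(\mathbf{j})|<r\bigr\}.
\]
Now set $\delta_0:=\min_k\delta_{0,k}$ and, for $\delta<\delta_0$, apply the individual GTC for each $\mathcal{F}_k^{t_k}$ to bound the right-hand side by
\[
\Bigl(\prod_k C_k(t_{0,k},\delta)\Bigr)\,e^{|\mathbf{i}\wedge\mathbf{j}|\sum_k\psi_k(\delta)}\prod_{k=1}^n Z_{k,\mathbf{i}\wedge\mathbf{j}}^{t_{0,k}}(r).
\]

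The key algebraic step, and the step I expect to be the main obstacle, is to verify
\[
\prod_{k=1}^n Z_{k,\pmb{\omega}}^{t_{0,k}}(r)\le Z_{\pmb{\omega}}^{t_0}(r),\qquad\pmb{\omega}\in\Sigma_*,\ r>0.
\]
Because $\mathcal{F}^{t_0}$ is a direct product, at each $x\in\Sigma$ the Jacobian $D_{\Pi^{t_0}x}f_{\pmb{\omega}}^{t_0}$ is block-diagonal with blocks $A_k(x):=D_{\Pi_k^{t_{0,k}}x}f_{k,\pmb{\omega}}^{t_{0,k}}$, so its singular values are the (sorted) union of the singular values of the blocks. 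A direct bookkeeping argument, based on the elementary identity
\[
\min_{s=0,1,\ldots,q}\frac{r^{s}}{\phi^{s}(T)}=\prod_{i=1}^{q}\min\!\left(1,\frac{r}{\alpha_i(T)}\right)
\]
for any $T\in\mathbb{R}^{q\times q}$ (verified by tracking which ratio $r/\alpha_i$ is $\le 1$), then shows that for each fixed $x$,
\[
\min_{s=0,\ldots,d}\frac{r^s}{\phi^s(D_{\Pi^{t_0}x}f_{\pmb{\omega}}^{t_0})}=\prod_{k=1}^n\min_{s_k=0,\ldots,q_k}\frac{r^{s_k}}{\phi^{s_k}(A_k(x))}.
\]
Taking the infimum over $x$ and using $\inf_x\prod_k g_k(x)\ge\prod_k\inf_x g_k(x)$ for nonnegative $g_k$ yields the desired inequality.

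Finally, setting $\psi(\delta):=\sum_{k=1}^n\psi_k(\delta)$ and $C(t_0,\delta):=\prod_{k=1}^n C_k(t_{0,k},\delta)$ gives
\[
\eta\bigl\{t\in B_\Omega(t_0,\delta):|\Pi^t(\mathbf{i})-\Pi^t(\mathbf{j})|<r\bigr\}\le C(t_0,\delta)\,e^{|\mathbf{i}\wedge\mathbf{j}|\psi(\delta)}Z_{\mathbf{i}\wedge\mathbf{j}}^{t_0}(r),
\]
with $\psi(\delta)\to 0$ as $\delta\to 0$ since each $\psi_k(\delta)\to 0$. This is precisely the GTC for the product family, completing the proof.
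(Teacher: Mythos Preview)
Your proof is correct and follows essentially the same approach as the paper's: both reduce to the product of the factor events via Fubini and the product metric, apply the individual GTC bounds, and then use the identity $\min_{s}\frac{r^s}{\phi^s(T)}=\prod_i\min\{1,r/\alpha_i(T)\}$ (stated in the paper as a separate lemma) together with the block-diagonal structure of the Jacobian to recombine $\prod_k Z_{k,\pmb{\omega}}^{t_{0,k}}(r)$ into $Z_{\pmb{\omega}}^{t_0}(r)$ via $\prod_k\inf_x\le\inf_x\prod_k$. Your handling of the constants ($\delta_0=\min_k\delta_{0,k}$, $\psi=\sum_k\psi_k$, $C=\prod_k C_k$) is slightly more explicit than the paper's, which silently assumes a common $\psi$ and $C$ across the factors.
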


To prove the above proposition, we need the following.

\begin{lem}
\begin{itemize}
\item[(i)] Let $A$ be a real non-singular $d\times d$ matrix with singular values $\alpha_1\geq \cdots\geq \alpha_d$. Then for each $r>0$,
$$
\min\left\{\frac{r^p}{\phi^p(A)}:\; p=0,1,\ldots,d\right\}=\prod_{i=1}^d\frac{\min\{ \alpha_i, r\}}{\alpha_i},
$$
where $\phi^s(\cdot)$ is the singular value function defined as in  \eqref{e-singular}.
\item[(ii)] For $j=1,\ldots, n$, let $A_j$ be a real non-singular $d_j\times d_j$ matrix. Set
$$
M={\rm diag}(A_1,\ldots, A_n):=\left[
\begin{array}{cccc}
A_1 & {\bf 0} & \cdots & {\bf 0}\\
{\bf 0} & A_2 & \cdots & {\bf 0}\\
\vdots & \vdots &\ddots & {\bf 0}\\
{\bf 0} & {\bf 0} & \cdots & A_n
\end{array}
\right].
$$
Then
\begin{equation}
\label{e-4.1}
\begin{split}
\min&\left\{\frac{r^p}{\phi^p(M)}:\; p=0,1,\ldots,d_1+\cdots+d_n\right\}\\
&=\prod_{i=1}^n \min\left\{\frac{r^p}{\phi^p(A_i)}:\; p=0,1,\ldots,d_i\right\}.
\end{split}
\end{equation}
\end{itemize}
\end{lem}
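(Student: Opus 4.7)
My plan is to prove (i) first by a direct monotonicity argument on the sequence $p \mapsto r^p/\phi^p(A)$, and then to derive (ii) as an immediate consequence of (i) via the fact that the singular values of a block-diagonal matrix are the union (with multiplicity) of the singular values of the blocks.

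For part (i), I would start by recalling that for integer $p \in \{0,1,\ldots,d\}$, $\phi^p(A) = \alpha_1 \cdots \alpha_p$ (with the empty-product convention $\phi^0(A)=1$). Then I would examine the ratio
\[
\frac{r^{p+1}/\phi^{p+1}(A)}{r^{p}/\phi^{p}(A)} = \frac{r}{\alpha_{p+1}},
\]
which is $\leq 1$ precisely when $\alpha_{p+1}\geq r$. Since the $\alpha_i$ are non-increasing, there is a unique $k\in\{0,1,\ldots,d\}$ such that $\alpha_1\geq\cdots\geq\alpha_k\geq r>\alpha_{k+1}\geq\cdots\geq\alpha_d$ (with the obvious convention when $k=0$ or $k=d$). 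The sequence $p\mapsto r^p/\phi^p(A)$ is therefore non-increasing on $\{0,\ldots,k\}$ and non-decreasing on $\{k,\ldots,d\}$, so its minimum is attained at $p=k$ and equals $r^k/(\alpha_1\cdots\alpha_k)$. On the other hand, the right-hand side of (i) splits as
\[
\prod_{i=1}^d \frac{\min\{\alpha_i,r\}}{\alpha_i} = \prod_{i=1}^{k}\frac{r}{\alpha_i}\cdot\prod_{i=k+1}^{d} 1 = \frac{r^k}{\alpha_1\cdots\alpha_k},
\]
matching the minimum. This step is entirely elementary; I expect no real obstacle.

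For part (ii), the key input is that the singular values of $M=\mathrm{diag}(A_1,\ldots,A_n)$ are exactly the multiset union of the singular values of $A_1,\ldots,A_n$ (this follows from $M^{T}M=\mathrm{diag}(A_1^{T}A_1,\ldots,A_n^{T}A_n)$, whose spectrum is the union of the spectra of the blocks). Applying (i) both to $M$ and to each $A_i$, with $d=d_1+\cdots+d_n$, I get
\[
\min_{0\leq p\leq d}\frac{r^p}{\phi^p(M)} = \prod_{i=1}^{n}\prod_{j=1}^{d_i}\frac{\min\{\alpha_j(A_i),r\}}{\alpha_j(A_i)} = \prod_{i=1}^{n}\min_{0\leq p_i\leq d_i}\frac{r^{p_i}}{\phi^{p_i}(A_i)},
\]
which is exactly \eqref{e-4.1}. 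The only mildly delicate point is justifying the singular-value identification for block-diagonal matrices, but this is standard and can be cited from any linear-algebra reference (e.g.\ Horn--Johnson). Overall I anticipate no substantive obstacle; the content of the lemma is really just a bookkeeping observation packaging the factorization across blocks.
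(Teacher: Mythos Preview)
Your proposal is correct and follows essentially the same approach as the paper: the paper leaves (i) as a direct exercise and derives (ii) from (i) via the fact that the singular values of $M$ (with multiplicity) are exactly the union of those of the blocks $A_i$. Your monotonicity argument for (i) is a clean way to carry out that exercise, and your block-diagonal reduction for (ii) is precisely what the paper has in mind.
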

\begin{proof}
The proof of (i) is direct and simple. We leave it to the reader as an exercise.  Part (ii) is just a consequence of (i), using the fact  that the set of
 singular values (including the multiplicity) of $M$ are precisely the union of  those of $A_i$, $i=1,\ldots, n$.
\end{proof}

Now we are ready to prove Proposition \ref{pro-4.2}.
 \begin{proof}[Proof of Proposition \ref{pro-4.2}]
 Write
 $$\mathcal F_k^{t_k}=\{f_{i,k}^{t_k}\}_{i=1}^\ell,\qquad t_k\in {\Omega}_k,\; k=1,\ldots,n.$$
 For $\pmb{\omega}=\omega_1\ldots \omega_m\in \Sigma_*$, write $f_{{\pmb{\omega}},k}^{t_k}=f_{\omega_1,k}^{t_k}\circ\cdots\circ f_{\omega_m,k}^{t_k}$. Let $\Pi^{t_k}_k$ denote the coding map associated with the IFS $\mathcal F_k^{t_k}$, and $\Pi^{(t_1,\ldots, t_n)}$  the coding map associated with the IFS $\mathcal F^{(t_1,\ldots, t_n)}$.  According to the GTC assumption on the families $(\mathcal F_k^{t_k})_{t_k\in {\Omega}_k}$, $k=1,\ldots, n$, there exist $\delta_0>0$ and a function $\psi:\; (0, \delta_0)\to [0,\infty)$ with $\lim_{\delta\to 0}\psi(\delta)=0$ such that for every $\delta\in (0, \delta_0)$ and $(s_1,\ldots, s_n)\in {\Omega}_1\times\cdots\times {\Omega}_n$,  there is  $C=C(\delta,s_1,\ldots, s_n)>0$ satisfying the following: for each $k\in \{1,\ldots, n\}$,   distinct $\bi,\bj\in \Sigma$ and $r>0$,
 \begin{equation}
\label{e-f2*}
\begin{split}
\eta_k&\left\{t_k\in B_{{\Omega}_k}(s_k, \delta):  \; |\Pi^{t_k}_k({\bf i})-\Pi^{t_k}_k({\bf j})  |< r\right\}\\
&\leq C e^{|{\bf i}\wedge {\bf j}| \psi(\delta)} \inf_{x\in \Sigma} \min\left\{
\frac{r^p}{\phi^p\left(D_{\Pi^{t_k}_kx}f_{\bi\wedge \bj, k}^{t_k}\right)}:\; p=0,1,\ldots, q_k
\right\},\end{split}
\end{equation}
where $B_{{\Omega}_k}(s_k, \delta)$ stands for the closed ball in ${\Omega}_k$ of radius $\delta$ centered at $s_k$.  Writing
$\mathbf{t}=(t_1,\ldots, t_n)$,  $\mathbf{s}=(s_1,\ldots, s_n)$  and using \eqref{e-f2*},
\begin{equation*}
\begin{split}
\eta_1&\times \cdots\times \eta_n\left\{ \mathbf{t}\in  B_{{\Omega}}(\mathbf{s}, \delta):  \; |\Pi^{\mathbf{t}}({\bf i})-\Pi^{\mathbf{t}}({\bf j})  |< r\right\}\\
&\leq \prod_{k=1}^n\eta_k\left\{t_k\in B_{{\Omega}_k}(s_k, \delta):  \; |\Pi^{t_k}_k({\bf i})-\Pi^{t_k}_k({\bf j})  |< r\right\}\\
&\leq \prod_{k=1}^n \left(C e^{|{\bf i}\wedge {\bf j}| \psi(\delta)} \inf_{x\in \Sigma} \min\left\{
\frac{r^p}{\phi^p\left(D_{\Pi^{s_k}_kx}f_{\bi\wedge \bj, k}^{s_k}\right)}:\; p=0,1,\ldots, q_k
\right\}\right)\\
&\leq C^n e^{n|{\bf i}\wedge {\bf j}| \psi(\delta)}\inf_{x\in \Sigma}\prod_{k=1}^n \min\left\{
\frac{r^p}{\phi^p\left(D_{\Pi^{s_k}_kx}f_{\bi\wedge \bj, k}^{s_k}\right)}:\; p=0,1,\ldots, q_k
\right\}\\
&=C^n e^{n|{\bf i}\wedge {\bf j}| \psi(\delta)}\inf_{x\in \Sigma}\min \left\{
\frac{r^p}{\phi^p\left(D_{\Pi^{\mathbf{t}}x}f_{\bi\wedge \bj}^{\mathbf{s}}\right)}:\; p=0,1,\ldots, q_1+\cdots+q_n
\right\}\\
&=C^n e^{n|{\bf i}\wedge {\bf j}| \psi(\delta)}Z_{\bi\wedge\bj}^{\mathbf{s}}(r),
\end{split}
\end{equation*}
where we have used \eqref{e-4.1} in the second last  equality. Hence the family $\mathcal F^{\mathbf{t}}$, $\mathbf{t}\in {\Omega}$, satisfies the GTC with respect to the measure $\eta_1\times\cdots\times \eta_n$, where the involved constant and the function in the definition of  GTC are $C^n$ and $n\psi(\cdot)$, respectively.
\end{proof}

\section{The proof of Theorem \ref{thm-1.2} and final questions}
\label{S-thm-1.2}

Now we are ready to prove Theorem \ref{thm-1.2}.
\begin{proof}[Proof of Theorem \ref{thm-1.2}]
{\color{red} This} follows directly  by combining Theorems   \ref{thm-3.3}, \ref{thm-6.1} and Proposition \ref{pro-4.2}.
\end{proof}

Below we  list a few `folklore' open questions on the dimension of  the  attractors of $C^1$ IFSs. One may formulate the corresponding questions on the dimension of push-forwards of ergodic invariant measures on the attractors.

\medskip
\noindent {\bf  Question 1}.  Is it true that for  every $C^1$ IFS $\mathcal F=\{f_i\}_{i=1}^\ell$ on $\R^d$ satisfying \eqref{e-ratio}, there is a neighborhood $\Delta$ of ${\bf 0}$ in $\R^{\ell d}$ such that for $\mathcal L_{\ell d}$-a.e.~$\tt=({\bf t}_1,\ldots, {\bf t}_\ell)\in \Delta$,
$$
\dim_HK^{\tt}=\dim_BK^{\tt}=\min\{\dim_S \mathcal F^{\tt}, \; d\}?
$$
where $K^{\tt}$ is the attractor of the IFS ${\mathcal F}^{\tt}=\{f_i+{\bf t}_{i}\}_{i=1}^\ell$.

\medskip
\noindent {\bf  Question 2}. Do we have
$$
\dim_HK=\dim_BK=\min\{\dim_S \mathcal F, \; d\}
$$
for the attractor $K$ of a ``generic'' $C^1$ IFS $\mathcal F$ on $\R^d$ (in an appropriate  sense)?
\bigskip

\noindent {\bf Acknowledgements}.  The authors would like to thank the referee for helpful comments and suggestions. They also thank Zhou Feng for catching some typos. The research of Feng was partially supported by a HKRGC
GRF grant and the Direct Grant for Research in CUHK. The research of Simon was
partially supported by the grant OTKA K104745. Significant part of the research
was done during Simon's visit to CUHK which was supported by a HKRGC GRF
grant.

%\bibliography{C1IFS}
%\bibliographystyle{plain}
\end{document}